\documentclass[11pt]{amsart}
\usepackage{amsfonts, amsmath, amssymb, amscd, amsthm, graphicx, setspace, enumitem, color}
\usepackage{hyperref}
\usepackage{pinlabel}
\usepackage{mathtools}
\usepackage{graphicx}
\usepackage{stackengine}
\usepackage{extarrows}
\usepackage{bm}

\hoffset -1.45cm \voffset -1.8cm \textwidth=6.2in \textheight=8.3in
\tolerance=9000 \emergencystretch=5pt \vfuzz=2pt
\parskip=1.2mm

\newtheorem{theorem}{Theorem} [section]
\newtheorem{lemma}[theorem]{Lemma}
\newtheorem{proposition}[theorem]{Proposition}
\newtheorem{corollary}[theorem]{Corollary}

\newtheorem{example}[theorem]{Example}

\newtheorem{remark}[theorem]{Remark}


\theoremstyle{definition}
\newtheorem{definition}[theorem]{Definition}
\newtheorem{notation}[theorem]{Notation}

\usepackage{tikz}
\usetikzlibrary{decorations.markings}
\usetikzlibrary{arrows,snakes}
\usepackage{verbatim}
\tikzset{edge/.style = {->,> = latex}}
\usetikzlibrary{decorations.pathreplacing,shapes.multipart}
\usetikzlibrary{automata} 
\usetikzlibrary{positioning} 
\usetikzlibrary{arrows} 

\newcommand\N{{\mathbb N}}
\newcommand\Z{{\mathbb Z}}

\newcommand\Q{{\mathbb Q}}

\newcommand{\NF} {\mathrm{NF}}
\newcommand{\GF} {\mathrm{GF}}
\newcommand{\cA} {\mathcal A}

\newcommand{\mbS}{\mathbb{S}}
\newcommand{\T}{\intercal}

\newcommand{\fsa}{{\mathrm{fsa}}}

\title{Rational sets in virtually abelian groups: languages and growth}
\author{Laura Ciobanu and Alex Evetts}
\address{Maxwell Institute of Mathematical Sciences, and Department of Mathematics, Heriot-Watt University,
Edinburgh EH14 4AS, UK}
\email{l.ciobanu@hw.ac.uk}
\address{Heilbronn Institute for Mathematical Research, and Department of Mathematics, University of Manchester, Manchester M13 9PL, UK}
\email{alex.evetts@manchester.ac.uk}
\keywords{rational set, definable set, virtually abelian groups, formal languages,
growth of groups}

\subjclass[2020]{03D05, 20F10, 20F65, 68Q45}
\begin{document}

\begin{abstract}
In this paper we generalise and unify the results and methods used by Benson, Liardet, Evetts, and Evetts \& Levine, to show that rational sets in a virtually abelian group $G$ have rational (relative) growth series with respect to any generating set for $G$. We prove equivalences between the structures used in the literature, and establish the rationality of important classes of sets in $G$: definable sets, algebraic sets, conjugacy representatives and coset representatives (of any fixed subgroup), among others.
Furthermore, we show that any rational set, when written as words over the generating set of $G$, has several EDT0L representations. 	
\end{abstract}

\maketitle

\section{Introduction}

 In this paper we present effective ways of representing sets of group elements in finitely generated virtually abelian groups as formal languages. Given a finite set $A$, any collection of words over $A$ is a called a formal language, and can be identified with a subset of the free monoid $A^*$. Formal languages are classified in the literature in terms of their complexity, and our goal is to find the simplest possible classes of languages that can encode the structure of sets of group elements: to this end, the main protagonists of the paper will be `regular languages' and the related `rational sets', but other types of sets and languages will feature as well, such as semilinear, $n$-regular or EDT0L.

All groups in this paper are finitely generated. For a group $G$ with a finite generating set $S$, let $\varpi: S^* \mapsto G$ be the natural projection which maps words to group elements. A subset $R\subseteq G$ is called \emph{rational} if $R=\varpi(L)$ for some regular language $L$ over $S$, where by \emph{regular language} we mean a set of words over $S$ defined by some finite state automaton (see Section \ref{sec:languages}). A useful feature here is that the rationality of a set is independent of the generating set (see \cite[Section 3]{LohreySurvey}).

Rational sets have played an important role in group theory since the 1990s (see \cite{bartholdi2010rational}), especially in free and hyperbolic groups. The notions of `regular' and `rational' coincide in free monoids by definition, but are different otherwise. Rational sets in groups consist of group elements, while regular sets consist of words in a free monoid. Moreover, in general, there is no bijection between rational sets of elements and regular languages of words because rational sets do not always have a regular language of unique representatives in the group. Nevertheless, being able to produce a set in a group via a finite state automaton is useful for the design of algorithms (\cite{groups_langs_aut}), can help compute the growth series of the set, or establish the series' rationality. Another concept related to rationality is \emph{semilinearity} (see Section \ref{sec:ratsemilin} for definitions and properties), and for subsets of commutative monoids, semilinearity coincides with rationality (Theorem \ref{thm:ES}(1)). In the most basic and well known setting, where the monoid is $\mathbb{N}^p$, a \emph{linear} set is of the form $\{v+k_1u_1 +k_2u_2 +\dots+k_mu_m \in \mathbb{N}^p \mid k_i \in \mathbb{N} \ \textrm{for} \ 1\leq i \leq m\}$, where $m \in \mathbb{N}$ and $v, u_i$ are vectors in $\mathbb{N}^p$; a \emph{semilinear} set is a finite union of linear ones.

We focus in this paper on finitely generated virtually abelian groups, and show that a multitude of sets in these groups turn out to be rational (Theorem \ref{Thm:growth}(2)); as a result, the sets have a nice characterisation as formal languages in terms of both natural normal forms (Theorem \ref{Thm:introNF} and paragraph above it) and geodesic representatives (Theorem \ref{Thm:geod_reps}). That is, we show that rational sets can be expressed in terms of $n$-regular and EDT0L languages, and have rational weighted growth series with respect to \emph{any} generating set and \emph{any} weight (Theorem \ref{Thm:growth}(2)). In fact, our results can be extended to \emph{complete growth series} that arise in the group ring $\mathbb{Z}[G]$ (see \cite{Liardet}), but we do not consider this generalisation in the paper.

A \emph{geodesic representative}, or simply \emph{geodesic}, for an element $g$ in $G$ over the finite generating set $S$, is a word $w$ such that the length of $w$ is minimal among all words $v$ over $S$ that represent $g$, that is, for which $\varpi(v)=g$. Geodesic representatives for group elements are essential, since they provide the lengths of groups elements, which are needed to compute growth. It is well known that the growth series of a regular language is a rational function, and so a standard technique in group theory is to exhibit a regular language of unique geodesic representatives for a set of group elements in order to compute its growth series and show this series is rational. However, the sets studied in this paper are not always expressible as  regular languages of geodesics, and different approaches to growth are needed for virtually abelian groups; these  go all the way to Benson (\cite{Benson}), and the unpublished thesis of Liardet (\cite{Liardet}), where polyhedral and semilinear sets, respectively, are used to establish rationality of the growth series. While Benson and Liardet's proofs have a common thread, the structures they rely on appear to be different at first.

We show here that the types of sets used in \cite{Benson} and \cite{Liardet} are in fact identical, and coincide with the \emph{coset-wise polyhedral} sets in the work of the second author and Levine on algebraic sets in virtually abelian groups. Furthermore, there is yet another connection between the various types of sets studied here and one more kind of sets: the subsets of $\mathbb{N}^k$ defined by \emph{Presburger formulas} and called \emph{Presburger sets}. From \cite[p. 297 - 298]{GS66} it follows that any polyhedral set of $\mathbb{N}^k$ is in fact a Presburger set, and from \cite[Theorem 1.3]{GS66} one can see that the class of Presburger sets is in fact the same as the class of semilinear sets of $\mathbb{N}^k$. It was shown in \cite[Theorem 1.5]{Woods15} that a subset $U \subseteq \mathbb{N}^k$ has rational multivariate generating function (of a particular kind) if and only if $U$ is a Presburger set. This shows how natural Presburger, and therefore semilinear, sets are in the study of rational generating functions.

Besides unifying the approaches of Benson \cite{Benson}, Liardet \cite{Liardet}, Evetts \cite{Evetts}, and Evetts \& Levine \cite{EvettsLevine} for studying the growth of sets in virtually abelian groups, we add to the list of rational sets further important classes: definable (by first-order theory) sets, algebraic sets (that is, solution sets to system of equations), conjugacy representatives, and coset representatives of any given subgroup. Definable sets are rational because they are Boolean combinations of cosets of definable subgroups of virtually abelian groups \cite[Theorems 4.1 and 3.2]{HrushovskiPillay}. All algebraic sets are definable since they are determined by existential first-order formulas, but we mention them separately as they are important on their own, and because they were the object of study in \cite{EvettsLevine} while definable sets were not.

Conjugacy and coset representatives were studied extensively in the work of the second author (\cite{Evetts}), where an approach to choosing one shortest element and an appropriate geodesic representative per conjugacy class or coset was given, and where it was shown that the conjugacy growth series and the growth series of coset representatives (of any fixed subgroup) are rational. Based on the work of the second author in \cite{Evetts} we are able to show here that a set of conjugacy representatives can be chosen to form a rational set, and similarly for a set of coset representatives for any fixed subgroup.

We summarise the results in this paper, which include the ones mentioned above due to several authors, in Theorems \ref{Thm:growth} and \ref{Thm:structure}. Our results for growth hold for any (positive integer) weight we assign to the group generators (see Section \ref{sec:growth}); when each generator has weight equal to $1$ we obtain the standard growth series.  

\begin{theorem}[Theorems \ref{thm:cosetreps}, \ref{thm:conjreps}, \ref{thm:CWPrational}, Corollary \ref{cor:rational-rational}]\label{Thm:growth} \leavevmode

Let $G$ be a finitely generated virtually abelian group.
\begin{enumerate}
\item A rational set in $G$ has rational weighted growth series with respect to \underline{any} generating set of $G$ and \underline{any} weight on $G$.  

\item The following types of sets are rational, and therefore have rational weighted growth series with respect to \underline{any} generating set of $G$ and \underline{any} weight on $G$:
\begin{enumerate}
\item elements of any fixed subgroup,
\item coset representatives of a fixed subgroup,
\item algebraic sets,
\item definable sets,
\item conjugacy representatives (as in Theorem \ref{thm:conjreps}).
\end{enumerate}
\end{enumerate}
\end{theorem}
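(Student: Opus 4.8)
The plan is to reduce everything to the geometry of a finite-index free abelian normal subgroup. Fix $\Z^n \trianglelefteq G$ of finite index with transversal $\{t_1,\dots,t_m\}$, so that $G = \bigsqcup_{i=1}^{m} t_i\Z^n$. The first and central step is to establish a dictionary: the image in $G$ of any regular language over $S$ decomposes, under this coset partition, into a finite union of \emph{semilinear} (equivalently, coset-wise polyhedral) subsets of $\Z^n$. This is the structural equivalence advertised in the introduction, identifying rational sets with the sets of Benson and Liardet; I would prove it by pushing the finite-state automaton through the projection $G \to G/\Z^n$ and reading off, coset by coset, the linear-algebraic data recorded by the accepting runs.

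For part (1), rationality of the growth series then follows from two classical ingredients. First, by Benson's theorem \cite{Benson} the word-length function $|\cdot|_S$, restricted to a coset $t_i\Z^n$ and transported to $\Z^n$, is piecewise linear over a finite fan of rational polyhedral cones. Second, over each such cone the growth series of a semilinear set is a weighted lattice-point generating function $\sum t^{\ell(\mathbf{v})}$ with $\ell$ linear, which is a rational function of $t$ by the standard theory of generating functions for lattice points in rational polyhedra. Intersecting each semilinear piece with each cone yields finitely many such contributions, and summing them produces a rational function; since $S$ enters only through Benson's finite polyhedral data, the argument is uniform over \underline{any} choice of $S$. The parenthetical ``relative'' refers to measuring elements of $R$ by their length in the ambient $G$, which is precisely the quantity just computed, with the growth series of $G$ itself recovered as the special case $R=G$.

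The principal obstacle is the dictionary together with the compatibility of Benson's length function with the semilinear structure. The length is extrinsic: it is induced from $G$ and ``twists'' as one passes between cosets, so the real difficulty is showing that this twisting is absorbed by finitely many affine pieces and that the resulting regions stay semilinear, so that the lattice-point machinery applies. A secondary fact needed downstream is that the rational subsets of $G$ form a Boolean algebra; this is special to the virtually abelian setting and must be proved, not assumed, before the Hrushovski--Pillay step can be invoked.

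For part (2) each listed class is reduced to rationality. \textbf{(a)} Every subgroup of $G$ is finitely generated, since $G$ is finitely generated virtually abelian, so a subgroup $H=\langle h_1,\dots,h_k\rangle$ is the image of the regular language $(h_1\mid h_1^{-1}\mid\cdots\mid h_k\mid h_k^{-1})^*$ and is thus rational. \textbf{(c)} Algebraic sets are coset-wise polyhedral by \cite{EvettsLevine}, hence rational via the dictionary. \textbf{(d)} By \cite{HrushovskiPillay} a definable set is a Boolean combination of cosets of definable subgroups; cosets are rational, and rationality is preserved under the Boolean operations by the Boolean-algebra property above. \textbf{(b), (e)} For coset and conjugacy representatives I would follow \cite{Evetts}, making a canonical (shortlex-type) selection within each coset or conjugacy class and verifying that the resulting transversal is semilinear. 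I expect the conjugacy case to be the hardest individual part: conjugacy in $G$ is governed by the finite action of $G/\Z^n$ on $\Z^n$ together with a twisted-conjugacy action on the abelian part, and choosing representatives compatibly across all twisted orbits, while keeping the selection semilinear, is where the genuine work lies.
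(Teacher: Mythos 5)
Your proposal is correct and follows essentially the same route as the paper: the rational $=$ coset-wise polyhedral $=$ semilinear dictionary (Proposition \ref{prop:rationalCWP}, proved via Grunschlag's lemma), Benson's polyhedral geodesic normal form to control the length function for an arbitrary generating set (Theorems \ref{thm:cosetreps} and \ref{thm:CWPrational}), rationality of the generating function of a semilinear set (Proposition \ref{prop:growth}), and Hrushovski--Pillay plus Boolean closure of rational sets for the definable case (Corollary \ref{Cor:DefRat}), with coset and conjugacy representatives deferred to \cite{Evetts} exactly as the paper does. The only cosmetic difference is that the paper linearises the length function exactly, upstairs in the exponent space $\N^{m_\pi}$ of Benson's patterned words, rather than treating it as a piecewise-linear function on $\Z^n$ itself.
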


 Theorem \ref{Thm:growth} follows largely from Theorem \ref{Thm:structure}, which explains the connections between the types of sets studied in the literature in order to establish rationality (see Section \ref{sec:eqv_sets} for the definitions of `polyhedral', `coset-wise polyhedral' and `semilinear').

\begin{theorem}\label{Thm:structure} \leavevmode 
\begin{enumerate} 
\item The following implications hold for subsets of $\mathbb{Z}^n$, $n\geq 1$:
\[\operatorname{polyhedral} \xLongleftrightarrow{\text{Proposition \ref{Prop:PeqS}}}  \operatorname{semilinear}.\]
\item  The following implications hold for subsets of virtually abelian groups:

(i) $\operatorname{coset-wise \ polyhedral} \xLongleftrightarrow{\text{Proposition \ref{prop:rationalCWP}}}  \operatorname{rational} \xLongleftrightarrow{\text{\cite[Thm. 4.1.5]{Liardet}}} \Z^k\operatorname{ - \ semilinear},$

(ii) $\operatorname{definable} \xLongrightarrow{\text{Corollary \ref{Cor:DefRat}}}  \operatorname{rational}.$

\end{enumerate}
\end{theorem}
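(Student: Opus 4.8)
The plan is to treat Theorem~\ref{Thm:structure} as a compendium whose genuine content is distributed across the four cited results, and to prove each implication so that the $\mathbb{Z}^n$ case feeds the virtually abelian case. I would begin with Part~(1), the equivalence of polyhedral and semilinear subsets of $\mathbb{Z}^n$ (Proposition~\ref{Prop:PeqS}), since this is the combinatorial core on which everything else rests. The cleanest high-level route is to show that both classes coincide with the Presburger-definable sets, i.e. the first-order definable sets of $(\mathbb{Z},+,\leq)$: semilinearity equals Presburger-definability by the Ginsburg--Spanier theorem, while polyhedral pieces are cut out by linear inequalities and congruences, all of which are Presburger predicates. Concretely, for semilinear $\Rightarrow$ polyhedral I would write a linear set $\{v_0+\sum_i n_i v_i : n_i\in\mathbb{N}\}$ as the image of $\mathbb{N}^k$ under an affine map, express membership by an existentially quantified system of linear (in)equalities, and eliminate the quantifiers, tracking the congruence conditions introduced by integrality. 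For polyhedral $\Rightarrow$ semilinear I would decompose a basic polyhedral piece into finitely many linear sets, using a triangulation of the rational polyhedron into simplicial cones together with the fact that the integer points of a pointed rational cone are finitely generated (Gordan's lemma / a Hilbert-basis argument).

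With Part~(1) secured I would turn to Part~2(i). The equivalence rational $\Leftrightarrow$ $\mathbb{Z}^k$-semilinear is available directly from \cite[Thm.~4.1.5]{Liardet}, so the substantive task is coset-wise polyhedral $\Leftrightarrow$ rational (Proposition~\ref{prop:rationalCWP}). Here I would fix a finite-index normal free abelian subgroup $A\cong\mathbb{Z}^n$ of $G$ and decompose any subset of $G$ along the finitely many cosets of $A$; by definition a coset-wise polyhedral set is polyhedral on each such coset. For rational $\Rightarrow$ coset-wise polyhedral I would intersect a representing regular language with the (regular) preimage of each coset and use that the natural image of a rational subset of $\mathbb{Z}^n$ is semilinear, then invoke Part~(1) to rewrite each coset-wise image in polyhedral form. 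For the converse I would note that each linear piece, being a finitely generated coset of a submonoid, is visibly rational, and then check that the coset decomposition and the monoid operations defining rationality are compatible.

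For Part~2(ii) I would prove definable $\Rightarrow$ rational (Corollary~\ref{Cor:DefRat}) by combining the structure theorem with Boolean closure. By \cite[Theorems 4.1 and 3.2]{HrushovskiPillay}, every definable subset of $G$ is a Boolean combination of cosets of definable subgroups; each such coset is coset-wise polyhedral and hence rational by Part~2(i). The remaining point is that the class of rational subsets of a virtually abelian group is closed under finite union, intersection, and complement: under the semilinear (equivalently polyhedral) correspondence established above, these group-theoretic operations become the Boolean operations on semilinear sets, which preserve semilinearity by Ginsburg--Spanier. A Boolean combination of rational sets is therefore rational.

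The main obstacle I anticipate is Proposition~\ref{prop:rationalCWP}, the coincidence of coset-wise polyhedral and rational sets. Unlike in free or hyperbolic groups, the argument crucially requires that the rational subsets of $G$ form a genuine Boolean algebra -- a property that fails for general groups and holds here only because of the abelian-by-finite structure, via the semilinear characterisation. Verifying in both directions that the monoid-theoretic closure operations defining rationality translate exactly into the polyhedral bookkeeping on each coset is where the real work lies; once this is in place, the definable case of Part~2(ii) follows almost formally from Hrushovski--Pillay together with the Boolean closure it supplies.
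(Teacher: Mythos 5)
Your overall architecture matches the paper's: the theorem is a compendium, and the content lives in Proposition \ref{Prop:PeqS}, Proposition \ref{prop:rationalCWP}, Liardet's theorem, and Corollary \ref{Cor:DefRat}. There is, however, one genuine gap, in the direction rational $\Rightarrow$ coset-wise polyhedral. Intersecting a representing regular language with the (regular) preimage of a coset $\Z^k t$ gives a regular language over the generators of $G$ whose image is $U\cap\Z^k t$; but your next step, ``use that the natural image of a rational subset of $\Z^n$ is semilinear,'' presupposes that you already hold a rational subset of $\Z^k$, i.e.\ the image of a regular language over generators of $\Z^k$. Passing from ``rational in $G$ and contained in $\Z^k t$'' to ``$(U\cap\Z^k t)t^{-1}$ is rational \emph{in} $\Z^k$'' requires rewriting the automaton over a generating set of the finite-index subgroup; this transfer is exactly Grunschlag's result (Lemma \ref{lem:Grunschlag}, \cite{Grunschlag_thesis}) and is the one non-formal ingredient of Proposition \ref{prop:rationalCWP}. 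Your sketch does not supply it, and it is not automatic. The converse direction (linear pieces are visibly rational, concatenate with transversal letters) and Part 2(ii) are fine: the paper likewise deduces Corollary \ref{Cor:DefRat} from Hrushovski--Pillay plus Boolean closure of rational sets, which indeed holds here because of the coset-wise polyhedral description (it even gets rationality of cosets more directly, since subgroups of finitely generated virtually abelian groups are finitely generated, hence rational, and translates of rational sets are rational).

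For Part (1) your route is genuinely different from the paper's and is valid. The paper argues elementarily: type (1) and (2) elementary regions are cosets of subgroups, a half-space is mapped by an explicit integer matrix onto translates of the standard half-space $\{e_1,\pm e_2,\ldots,\pm e_k\}^*$ indexed by a fundamental parallelepiped, intersections are handled by Eilenberg--Sch\"utzenberger (Theorem \ref{thm:ES}), and the converse reduces via Lemma \ref{lem:linindep} to showing that $D^*$ is polyhedral for linearly independent $D$, realized as an affine image of a union of orthants (Proposition \ref{prop:polyaffine}). Your Presburger argument is slicker: Benson's elementary regions are precisely the atomic Presburger predicates, so quantifier elimination identifies polyhedral sets with Presburger-definable sets, while Ginsburg--Spanier identifies those with the semilinear sets; the cost is importing quantifier elimination rather than staying inside the rational-monoid toolkit the rest of the paper uses. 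The triangulation/Gordan alternative you offer for polyhedral $\Rightarrow$ semilinear also works, but needs the extra bookkeeping for the congruence (type (2)) constraints, which the paper avoids by treating them as subgroup cosets from the start.
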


The rationality of the growth series in Theorem \ref{Thm:growth}, starting with Benson's result about the rationality of the group growth series, is remarkable because it holds for all generating sets, which is a rare behaviour for groups in general. The proofs showing rationality rely on the existence of geodesic representatives (with respect to any generating set) with rational growth series, which we establish in Section \ref{sec:gnf}. Moreover, these representatives have a nice structure from a language theoretic point of view, as in Theorem \ref{Thm:geod_reps}. Here $n$-regular languages (Definition \ref{def:fsa}) are a `higher-dimensional' version of standard regular languages; they are sets of tuples that we call $n$-variable languages, and they exhibit a lot of the properties of standard regular languages, but not all (see Remark \ref{rem:n-reg}(2)). Most importantly for this paper, they can be easily viewed as EDT0L languages by converting tuples of words into words (as in Proposition \ref{prop:forgetful}). EDT0L languages (see Section \ref{sec:EDT0L}) have proved to be a natural class of languages to represent, as words, important classes of sets in groups (\cite{eqns_free_grps, eqns_hyp_grps, VF_eqns}), and we recommend \cite{appl_L_systems_GT} for background and motivation on these languages and their applications to group theory.

For the following result, Proposition \ref{prop:forgetful} explains how representatives of group elements, given as tuples of words, project to words that form EDT0L languages.

\begin{theorem}[Theorem \ref{thm:GF}]\label{Thm:geod_reps}
In a virtually abelian group $G$, with respect to any finite set $X$ of generators, the following hold: (i) each of the sets (1) -- (6) can be seen as a set of tuples that form an $n$-regular language, for some $n$ depending on the generating set, (ii) each tuple corresponds to a geodesic representative of a group element (as in Notation \ref{not:GF}), and (iii) each set projects (as in Proposition \ref{prop:forgetful}) to an EDT0L language:
\begin{enumerate}
\item rational sets,
\item subgroups,
\item coset representatives of a fixed subgroup,
\item algebraic sets,
\item definable sets,
\item conjugacy representatives.
\end{enumerate}
\end{theorem}

However, the geodesic representatives above (in tuple or EDT0L form) are complicated to describe and impractical, so the second theme of the paper is to consider the natural normal forms over standard generating sets: words of the form $wt$, where $w$ represents an element in the finite index abelian subgroup, and $t$ is one of finitely many coset representatives. We show that if we choose to work with the natural normal form (which might not be geodesic) of a virtually abelian group, then any set in Theorem \ref{Thm:growth} will have an EDT0L representation as words over the generating set. 

\begin{theorem}[Theorem \ref{thm:NF}, Corollary \ref{cor:NFrational}] \label{Thm:introNF}
Let $G$ be a virtually abelian group with set of generators $X=\Sigma \cup T$, where $\Sigma$ is a symmetric set of generators for a finite index free abelian subgroup of $G$, and $T$ is a finite transversal set for this subgroup. 

The following have an $n$-regular natural normal form when viewed as tuples of words, and therefore project to EDT0L languages, with respect to $X$:
\begin{enumerate}
\item rational sets,
\item subgroups,
\item coset representatives of a fixed subgroup,
\item algebraic sets,
\item definable sets,
\item conjugacy representatives.
\end{enumerate}
\end{theorem}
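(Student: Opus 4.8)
The plan is to reduce all six cases to the single case of rational sets, and then to manufacture the $n$-regular natural normal form directly from the coset-wise polyhedral (equivalently, semilinear) description of a rational set.

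First I would observe that items (2)--(6) are all instances of (1). A fixed subgroup of $G$ is finitely generated (since $G$ is polycyclic-by-finite) and hence rational; coset representatives, algebraic sets, definable sets, and conjugacy representatives are rational by Theorem \ref{Thm:growth}(2) together with the implication definable $\Rightarrow$ rational (Corollary \ref{Cor:DefRat}) and the constructions of Theorems \ref{thm:cosetreps} and \ref{thm:conjreps}. So it suffices to treat an arbitrary rational set $R\subseteq G$. Writing $A=\langle\Sigma\rangle\cong\Z^n$ for the fixed finite-index free abelian subgroup, every $g\in G$ has a unique natural normal form $g=wt$ with $w\in A$ and $t\in T$, and the object to analyse is the language $\{\,\mathrm{nf}(g):g\in R\,\}$.

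By Proposition \ref{prop:rationalCWP}, $R$ is coset-wise polyhedral, so it splits as a finite disjoint union $R=\bigsqcup_{t\in T}R_t\,t$, where each $R_t=\{w\in A: wt\in R\}$ is a polyhedral subset of $A\cong\Z^n$; by Proposition \ref{Prop:PeqS} each $R_t$ is semilinear, i.e. a finite union of linear sets $\{b+\sum_j\lambda_j p_j:\lambda_j\in\N\}$ with $b,p_j\in\Z^n$. The natural normal form language of $R$ is then the finite union over $t\in T$ of the sets $\{\,\mathrm{nf}(w)\,t:w\in R_t\,\}$, where $\mathrm{nf}(w)=a_1^{m_1}\cdots a_n^{m_n}$ records the coordinate vector $(m_1,\dots,m_n)$ of $w$. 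The technical heart is therefore to show that, for a single linear set $L\subseteq\Z^n$, the corresponding set of normal-form tuples $(a_1^{m_1},\dots,a_n^{m_n})$, as $(m_1,\dots,m_n)$ ranges over $L$, is $n$-regular in the sense of Definition \ref{def:fsa}. Here I would use that the coordinates $m_i=b_i+\sum_j\lambda_j(p_j)_i$ are affine-linear in the nonnegative parameters $\lambda_j$, so membership of a tuple in $L$ is verified by a synchronous multi-tape automaton reading the $n$ exponent-tapes in lockstep and checking the finitely many linear relations cutting out $L$ (equivalently, the automaton-recognisability of semilinear sets in the unary encoding). Since $n$-regular languages are closed under finite union (Remark \ref{rem:n-reg}), taking unions over the linear pieces of each $R_t$ and over the finitely many $t\in T$ yields an $n$-regular natural normal form for $R$; applying the forgetful map of Proposition \ref{prop:forgetful} to convert tuples of words into single words then exhibits the language as EDT0L.

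I expect the main obstacle to be the single-linear-set step: setting up the encoding of $\Z^n$ (rather than $\N^n$) so that the sign of each exponent and the affine-linear parametrisation are compatible with the synchronous, lockstep reading demanded by $n$-regularity, and checking that the constraints defining a semilinear set can genuinely be tracked in finite memory when the coordinates grow at different rates across the period vectors. Once this case is secure, the closure properties of $n$-regular languages and the equivalences already recorded in Theorem \ref{Thm:structure} reduce the remaining assembly to routine bookkeeping.
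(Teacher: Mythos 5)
Your overall architecture matches the paper's: reduce to rational sets, pass to the coset-wise polyhedral decomposition $R=\bigsqcup_t R_t\,t$ via Proposition \ref{prop:rationalCWP}, identify each $R_t$ with a semilinear subset of $\Z^n$ via Proposition \ref{Prop:PeqS}, build an $n$-variable automaton for the exponent tuples, and finish with the forgetful map of Proposition \ref{prop:forgetful} and closure under finite unions. However, the step you flag as ``the main obstacle'' is not a loose end to be tidied up later --- it is the actual content of the theorem, and the mechanism you sketch for it would not work. A single linear set $b+\N p_1+\cdots+\N p_j\subseteq\Z^n$ is \emph{parametrised} by the $\lambda_j$, not cut out by finitely many relations that a finite-state device can verify: an automaton reading unary exponent tapes cannot track the value of a linear form $\mathbf{u}\cdot\mathbf{z}$ over unbounded inputs, and in any case Definition \ref{def:fsa} gives an \emph{asynchronous} automaton (each edge advances at most one coordinate), not the synchronous lockstep machine you describe. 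The correct construction generates the language by looping on paths that spell out the period vectors $p_j$, one letter per edge.

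This is exactly where the sign problem bites, and it is why the paper inserts an extra decomposition you omit. If the period vectors have mixed signs in some coordinate $i$, the running partial sum in that coordinate can oscillate, so the automaton cannot decide whether to emit $a_i$ or its inverse $A_i$ as it loops, and the concatenated paths would produce words containing $a_iA_i$ that are not in normal form and do not have the right length. The paper's fix is Lemma \ref{lem:union}: first split each polyhedral $R_t$ into $2^n$ \emph{monotone} polyhedral pieces (Definition \ref{def:orthant}), each contained in a single orthant; then Proposition \ref{prop:monotonenregular} shows that for a monotone linear set all period vectors lie in one orthant, so every loop appends letters of a fixed sign in each coordinate and the concatenation stays inside $\mathrm{NF}_k(\Z^k)$. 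Without this monotone decomposition (or an equivalent device) your single-linear-set step has no proof, so as written the argument has a genuine gap precisely at its technical heart.
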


We note that Bishop (\cite{Bishop}) has investigated both the formal language properties and the growth series of the set of \emph{all} geodesic words in a virtually abelian group, showing amongst other results that this set forms a \emph{blind multicounter} language. Although it seems unlikely that the language is also EDT0L, it is not impossible, and we do not have enough evidence to put forward a conjecture in either direction. Furthermore, Bishop (\cite{BishopCW}) recently studied the coword problem in virtually abelian groups, that is, the set of words representing the trivial element, and showed that the cogrowth series of a virtually abelian group is the diagonal of an $\mathbb{N}$-rational series for every finite monoid generating set.

\section{Preliminaries}\label{sec:prelims}

\subsection{Growth of sets and groups}\label{sec:growth}
We will work with the basis $\{e_1,\ldots, e_r\}$ of $\Z^r$, where $e_i$ denotes the standard basis vector with $1$ in the $i$th entry and zeroes elsewhere. 

Let $U \subseteq\Z^r$ be a set. Given some choice of
	weight function $\|e_i\|\in\Z_{>0}$ (typically $\|e_i\|=1$ for all $i$) for the basis vectors $\{e_i\}_{i=1}^r$, we assign the $\ell_1$ norm $\| \mathbf{p}\|=\sum_{i=1}^r a_i\|e_i\|$ to the element
	$\mathbf{p}=(a_1, \ \ldots, \ a_r)\in U$. We define the spherical growth function counting elements of specified weights as
	\[\sigma_U(n)=\#\{\mathbf{p}\in U\mid\|\mathbf{p}\|=n\},\] and the resulting
	weighted growth series as
	\[\mbS_U(z)=\sum_{n=0}^\infty\sigma_U(n)z^n.\] 
	
	More generally, suppose $G$ is a group generated by a finite set $X$, with weights \(\|x\|\) for each generator (equal to 1 in the usual case). If $w=x_1\cdots x_k\in X^*$ is a word in the generators, we write
	$\overline{w}\in G$ for the group element that the word $w$ represents instead of the more cumbersome projection notation $\varpi(w)$ (as in second paragraph of the paper), and define the weight of \(w\) in the natural way as \(\|w\|=\sum_i\|x_i\|\). Let $\|g\|_X=\mathrm{min}\left\{\|w\|\mid w\in X^*,~\overline{w}=g\right\}$ be the weight of $g$ with respect to $X$. When all the weights are equal to 1, this is the familiar notion of word length. The (relative) weighted growth function of any set $U \subseteq G$ is $\sigma_{U,X}(n)=\#\{g\in U\mid ||g||_X=n\},$ and the (relative) weighted growth series is given by \[\mbS_{U,X}(z)=\sum_{n=0}^\infty\sigma_{U,X}(n)z^n.\] 

	We will frequently be interested in proving that the various growth series are rational: that is, there exist polynomials \(p,q\in\Z[z]\) such that \(\mbS(z)=\frac{p(z)}{q(z)}\). Moreover, the growth series will frequently lie in the following more restrictive class of \emph{\(\N\)-rational functions}, which have attracted recent attention in relation to growth of groups (see \cite{Bodart}).
	\begin{definition}\label{def:Nrational}
		The set of \emph{\(\N\)-rational functions} is the smallest set of functions $f(z)$ containing the polynomials \(\N[z]\) and closed under addition, multiplication, and quasi-inverse (that is, if \(f(0)=0\) then the quasi-inverse of \(f(z)\) is defined as \(\frac{1}{1-f(z)}\)).
	\end{definition}
	
\subsection{Preliminaries on formal languages} \label{sec:languages}
We start by introducing $n$-regular languages, $n\geq 1$, which are a generalisation of standard regular languages; that is, for $n=1$ we get exactly the regular languages defined by finite state automata (fsa).

\subsubsection{$n$-regular languages}
Here we take the approach from \cite[Section 2.10]{groups_langs_aut}, but slightly change the terminology to simplify the wording in this paper.
Let $A$ be a set. We write $A^n$ for the cartesian product $A \times \cdots \times A$ of $n$ copies of $A$, and write $\varepsilon$ for the empty word.

\begin{definition}\label{def:fsa}
	An \emph{asynchronous, $n$-variable finite state automaton} ($n$-variable fsa) $\mathcal{A}$ is a tuple $(\Sigma,\Gamma,s_0,F)$, where
	\begin{enumerate}
		\item $\Sigma$ is a finite alphabet,
		\item $\Gamma$ is a finite directed graph with edges labelled by elements of $(\Sigma\cup \{\varepsilon\})^n$ which have at most one non-$\varepsilon$ entry,
		\item $s_0\in V(\Gamma)$ is a chosen start vertex/state,
		\item $F\subseteq V(\Gamma)$ is a set of accept or final vertices/states.
	\end{enumerate}
	An $n$-tuple $\textbf{w}\in(\Sigma^*)^n$ is accepted by $\mathcal{A}$ if there is a directed path in $\Gamma$ from $s_0$ to some $s \in F$ such that $\textbf{w}$ is obtained by concatenating the labels on the path, and deleting all occurrences of $\varepsilon$ in each coordinate. A set of tuples like $\textbf{w}$ will be called an $n$-\emph{variable language}. An $n$-\emph{variable language} that is accepted by an asynchronous $n$-variable fsa will be called \emph{$n$-regular} for short.
\end{definition}

\begin{example}\label{ex:fsa}
The $2$-variable language $$L=\{(u,v) \mid u,v \in \{a,b\}^*, u, v \textrm{ have the same number of } a\}$$ is $2$-regular and given by the $2$-variable $\fsa$ in Figure \ref{fig:2fsa}. Here and later in the paper, we indicate the states by circles, the transitions by arrows, and distinguish the start state with an arrow, and the accept state(s) with a double circle.
\end{example}
\begin{figure}[h!]
		\centering
		\includegraphics[width=0.3\textwidth]{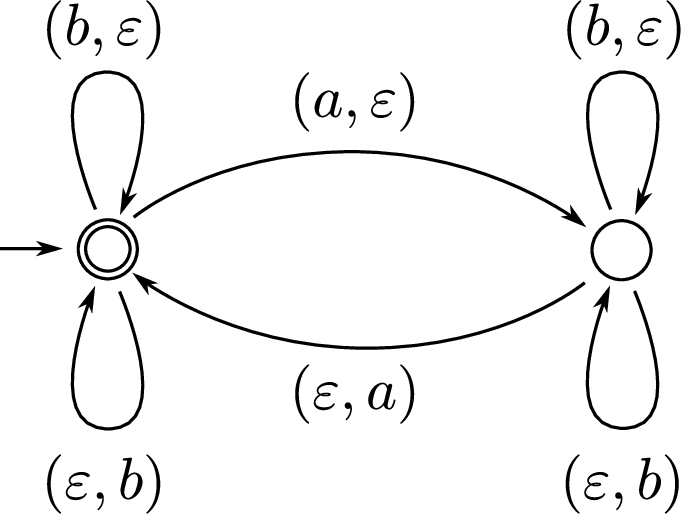}
		\caption{$2$-variable automaton for Example \ref{ex:fsa}}
		\label{fig:2fsa}
	\end{figure}

\begin{remark}\label{rem:n-reg}\leavevmode
\begin{enumerate}
\item[(1)] Definition \ref{def:fsa} gives a non-deterministic asynchronous $\fsa$ (see \cite[Section 2.5.1]{groups_langs_aut}), in that (i) edges labelled by tuples consisting entirely of $\epsilon$-coordinates are allowed, as are (ii) two edges with the same label starting at a single vertex. If we do not allow (i) and (ii) we have a \emph{deterministic} asynchronous $n$-$\fsa$.  
\item[(2)] Definition \ref{def:fsa} coincides with the usual definition of a regular language when $n=1$. However, for $n>1$, the classes of $n$-variable languages accepted by deterministic and non-deterministic asynchronous $n$-variable $\fsa$ do not coincide. See \cite[Exercise 2.10.3]{groups_langs_aut} for an example. 
\end{enumerate}
\end{remark}

Since $n$-variable fsa are not as common as $1$-variable fsa, we state and prove the following basic lemmas.
\begin{lemma}\label{lem:regularsubcat}
	The class of $n$-regular languages is closed under substitution and concatenation.
\end{lemma}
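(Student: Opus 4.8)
The plan is to give the standard automaton constructions, adapted to the asynchronous $n$-variable setting, and to verify correctness through the $\varepsilon$-deletion semantics of Definition \ref{def:fsa}. I take concatenation of $L_1, L_2 \subseteq (\Sigma^*)^n$ to be componentwise, $L_1 L_2 = \{(u_1 v_1, \ldots, u_n v_n) : (u_1,\ldots,u_n)\in L_1,\ (v_1,\ldots,v_n)\in L_2\}$, and substitution to be coordinatewise: a substitution $\sigma$ assigns to each letter $a\in\Sigma$ a regular language $\sigma(a)$, and $\sigma(L)=\bigcup_{(w_1,\ldots,w_n)\in L}\sigma(w_1)\times\cdots\times\sigma(w_n)$.

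For concatenation, I would take $n$-fsa $\mathcal{A}_j=(\Sigma,\Gamma_j,s_0^j,F_j)$ accepting $L_j$ for $j=1,2$, form the disjoint union $\Gamma_1\sqcup\Gamma_2$, and add an edge labelled by the all-$\varepsilon$ tuple $(\varepsilon,\ldots,\varepsilon)$ from each $f\in F_1$ to $s_0^2$; the new start state is $s_0^1$ and the new accept set is $F_2$. Since there are no edges from $\Gamma_2$ back into $\Gamma_1$, every accepting path factors uniquely as a path in $\Gamma_1$ from $s_0^1$ to some $f\in F_1$, followed by a bridge edge, followed by a path in $\Gamma_2$ from $s_0^2$ to an accept state. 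Reading the labels and deleting $\varepsilon$ in each coordinate, the bridge contributes nothing while the two portions contribute some $\mathbf{w}\in L_1$ and $\mathbf{w}'\in L_2$, so the accepted tuple is exactly the componentwise concatenation; conversely every such tuple arises this way. The bridge edges are all-$\varepsilon$, hence legal, so the new graph is a genuine $n$-fsa.

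For substitution, I would fix for each $a\in\Sigma$ a $1$-fsa $\mathcal{B}_a$ accepting $\sigma(a)$, and modify the $n$-fsa $\mathcal{A}=(\Sigma,\Gamma,s_0,F)$ for $L$ edge by edge. Each edge of $\Gamma$ carries a tuple with at most one non-$\varepsilon$ entry: I leave the all-$\varepsilon$ edges untouched, while an edge from $x$ to $y$ whose only non-$\varepsilon$ entry is $a$ in coordinate $i$ is replaced by a fresh copy of $\mathcal{B}_a$ in which every transition labelled $b$ is relabelled to carry $b$ in coordinate $i$ and $\varepsilon$ elsewhere, joined to the rest by an all-$\varepsilon$ edge from $x$ to the copy's start state and all-$\varepsilon$ edges from each accept state of the copy to $y$. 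The result is again an $n$-fsa, since every new edge has at most one non-$\varepsilon$ entry, all in coordinate $i$. Tracing an accepting path and applying $\varepsilon$-deletion, each original symbol $a$ read in coordinate $i$ has been expanded, in place, into an arbitrary word of $\sigma(a)$ read in the same coordinate; because deletion records, for each coordinate, precisely the non-$\varepsilon$ symbols in path order, the $i$-th coordinate of the output is $\sigma(w_i)$ for the corresponding $w_i$, yielding exactly $\sigma(L)$.

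The constructions are routine; the point requiring care, and the main obstacle, is checking that the asynchronous, $\varepsilon$-deleting acceptance condition makes these operations behave coordinatewise as claimed. Concretely, one must confirm that the interleaving of coordinates along a path is irrelevant after $\varepsilon$-deletion, and that both bridging and edge-expansion respect the constraint that every edge label has at most one non-$\varepsilon$ coordinate; the edge cases (empty coordinates in $L$, or $\varepsilon\in\sigma(a)$) cause no trouble, since $\varepsilon$-labelled paths are permitted throughout.
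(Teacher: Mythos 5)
Your proof is correct and follows essentially the same route as the paper: concatenation is handled by bridging each accept state of $\mathcal{A}_1$ to the start state of $\mathcal{A}_2$ with an all-$\varepsilon$ edge, and substitution by in-place edge replacement that keeps the new labels in the same coordinate as the original non-$\varepsilon$ entry. The only divergence is that you prove the more general form of substitution (each letter mapped to a regular language rather than to a single word $f(x)\in\Sigma^*$ as in the paper), and your construction, which inlines a copy of an automaton for $\sigma(a)$ where the paper inserts a path of $|f(x)|$ edges spelling $f(x)$, correctly specialises to the paper's argument.
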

\begin{proof}
Suppose $f:\Sigma \rightarrow \Sigma^*$ is a substitution and $L$ is an $n$-regular language on alphabet $\Sigma$ recognised by  $n$-variable $\fsa$ $\mathcal{A}$. To obtain an $n$-variable fsa for $f(L)$ we replace each edge $e_{(\varepsilon, \dots, x, \dots, \varepsilon)}$, where $x\in \Sigma$, by $|f(x)|$ edges, each labelled with the appropriate  letter from $f(x)$ in the same coordinate as $x$ in $e_{(\varepsilon, \dots, x, \dots, \varepsilon)}$, while also inserting $|f(x)|-1$ vertices/states connecting the new edges. It is immediate to see that this new fsa accepts $f(L)$.

To obtain an $n$-variable $\fsa$ for the concatenation of two $n$-regular languages $L_1$ and $L_2$ recognised by $\mathcal{A}_1$ and $\mathcal{A}_2$, respectively, we simply connect each accepting state in $\mathcal{A}_1$ to the start state of $\mathcal{A}_2$ by an edge with $\varepsilon$-coordinates only.
\end{proof}

When considering tuples in $n$-variable languages or operations with such languages, it is a standard technique to `pad' shorter words so that the all tuples have the same number of coordinates (see \cite[Section 2.10.1]{groups_langs_aut})). In our case, for tuple of words over an alphabet $\Sigma$, we use the enlarged alphabet $\Sigma \cup \{\varepsilon\}$ and extend any $n$-tuple to an $m$-tuple, for $m>n$, by $m-n$ coordinates that take the value $\varepsilon$. Thus to any $n$-variable language $L$ one can associate an $m$-variable padded version $L^{\mathtt{P}}$, for $m>n$.  

\begin{lemma}\label{lem:regularunion}
	If $L_1\subset(\Sigma_1^*)^{n_1}$ is $n_1$-regular and $L_2\subset(\Sigma_2^*)^{n_2}$ is $n_2$-regular, with $n_1<n_2$, then the padded $L_1^{\mathtt{P}}\cup L_2\subset((\Sigma_1\cup\Sigma_2)^*)^{n_2}$ is $n_2$-regular. 
\end{lemma}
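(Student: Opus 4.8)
The plan is to reduce to the standard construction for unions of automata, after resolving the two mismatches between $\mathcal{A}_1$ and $\mathcal{A}_2$: their alphabets and their numbers of tracks. Assume without loss of generality that $n_1 \le n_2 = n$. The alphabet mismatch is harmless, since an $n_i$-$\fsa$ over $\Sigma_i$ is equally an $n_i$-$\fsa$ over $\Sigma_1 \cup \Sigma_2$: enlarging the alphabet does not add any new accepted tuples. So I may regard both automata as working over $\Sigma := \Sigma_1 \cup \Sigma_2$.

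The track mismatch requires a padding step. Given the $n_1$-$\fsa$ $\mathcal{A}_1 = (\Sigma, \Gamma_1, s_0^{(1)}, F_1)$, I would build an $n$-$\fsa$ $\mathcal{A}_1'$ with the same underlying graph, start state, and accept states, but with each edge label $(x_1, \ldots, x_{n_1}) \in (\Sigma \cup \varepsilon)^{n_1}$ replaced by $(x_1, \ldots, x_{n_1}, \varepsilon, \ldots, \varepsilon) \in (\Sigma \cup \varepsilon)^{n}$, appending $n - n_1$ coordinates equal to $\varepsilon$. This preserves the requirement from Definition \ref{def:fsa} that each label have at most one non-$\varepsilon$ entry. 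Reading a path of $\mathcal{A}_1'$ and deleting $\varepsilon$-occurrences coordinatewise yields exactly the tuples $(w_1, \ldots, w_{n_1}, \varepsilon, \ldots, \varepsilon)$ with $(w_1, \ldots, w_{n_1})$ accepted by $\mathcal{A}_1$. Thus $\mathcal{A}_1'$ recognises the image of $L_1$ under the natural embedding $(\Sigma^*)^{n_1} \hookrightarrow (\Sigma^*)^{n}$ that appends empty words, which is the sense in which $L_1$ is interpreted as a subset of $((\Sigma_1 \cup \Sigma_2)^*)^n$ in the statement.

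With both $L_1$ (via $\mathcal{A}_1'$) and $L_2$ now recognised by $n$-$\fsa$ over the common alphabet $\Sigma$, I would finish with the usual union construction: introduce a new start state $s_0$ and adjoin two edges, both labelled by the all-$\varepsilon$ tuple $(\varepsilon, \ldots, \varepsilon) \in (\Sigma \cup \varepsilon)^n$, from $s_0$ to the start states of $\mathcal{A}_1'$ and $\mathcal{A}_2$, and take the accept states to be $F_1 \cup F_2$. Such all-$\varepsilon$ edges are permitted in a non-deterministic $n$-$\fsa$ by Remark \ref{rem:n-reg}(1). Any accepting path in the new automaton begins with one of these two $\varepsilon$-edges and then proceeds entirely inside one of the two original automata, so the accepted language is precisely $L_1 \cup L_2$, establishing $n$-regularity.

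The constructions here are all routine, and the only genuine point to pin down is the meaning of the union when $n_1 \ne n_2$: namely the convention that shorter tuples are padded with empty words, together with the observation that this padding is exactly what the $\varepsilon$-coordinate mechanism of Definition \ref{def:fsa} produces. Everything else is the familiar non-deterministic branching at a fresh start state.
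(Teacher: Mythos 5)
Your proof is correct and follows essentially the same route as the paper's: pad the lower-arity automaton's edge labels with $\varepsilon$-coordinates, then take the standard nondeterministic union via a fresh start state with all-$\varepsilon$ edges (the paper additionally merges accept states into one new final state, which is immaterial). Your explicit remarks on the alphabet enlargement and on the padding convention for interpreting $L_1$ inside $((\Sigma_1\cup\Sigma_2)^*)^n$ are welcome clarifications but do not change the argument.
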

\begin{proof}
Suppose $L_1$ and $L_2$ are recognised by $\mathcal{A}_1$ and $\mathcal{A}_2$, respectively, and $n=n_1>n_2$. Then, as explained above the lemma, we pad all edges in $\mathcal{A}_2$ with $\varepsilon$ to have $n$ coordinates in both automata. We then add a new start state $s_n$ that connects to the start states of $\mathcal{A}_1$ and $\mathcal{A}_2$ via fully $\varepsilon$-edges with $n$ coordinates, while also adding a new final state $s_f$, to which all final states in $\mathcal{A}_1$ and $\mathcal{A}_2$ connect via fully $\varepsilon$-edges with $n$ coordinates. This new automaton, with single start state $s_n$ and single final state $s_f$, will accept $L_1\cup L_2\subset((\Sigma_1\cup\Sigma_2)^*)^{n}$.
\end{proof}

\subsubsection{EDT0L languages}\label{sec:EDT0L}

 In the 1960s, Lindemayer introduced a collection of classes of languages called
\textit{L-systems}, which were originally used for the study of growth of
organisms. EDT0L (\textbf{E}xtended \textbf{D}eterministic
\textbf{T}able \textbf{0}-interaction \textbf{L}indenmayer) languages are one of the L-systems, and were introduced by
Rozenberg in 1973 (see \cite{appl_L_systems_GT} for details). 

L-systems, including EDT0L
languages, were studied intensively in computer science in the 1970s
and early 1980s; L-systems are subclasses of indexed languages, which although not historically part of the Chomsky hierarchy, fit nicely between context-free and context-sensitive. Since the first author, Diekert and Elder's use of EDT0L languages to
study equations in free groups, a number of other works have used EDT0L
languages (or a similar class called ET0L languages)
\cite{appl_L_systems_GT} to describe a variety of sets in many important classes of groups.

\begin{definition}
	An \textit{EDT0L system} is a tuple $\mathcal{H}=(\Sigma, C, w_0, \mathcal{R})$, where
	\begin{enumerate}
		\item $\Sigma$ is a finite alphabet, called the \emph{terminal alphabet};
		\item $C$ is a finite set containing $\Sigma$, called the \emph{extended alphabet} of $\mathcal H$;
		\item $w_0\in C^*$ is called the \emph{start word};
		\item $\mathcal{R}$ is a rational subset of the monoid $\mathrm{End}(C^*)$ of endomorphisms of $C^*$; that is, there is some finite set $B\subset\mathrm{End}(C^*)$ so that $\mathcal{R}$ can be obtained as the image of a regular language over $B$.	
	$\mathcal{R}$ is called the \emph{rational control} of the EDT0L system.
	\end{enumerate}
	The language \textit{accepted} by \(\mathcal H\) is
	\[
	L(\mathcal H) = \{\phi(w_0) \mid \phi\in\mathcal{R}\}\cap \Sigma^*.
	\]
	A language accepted by an EDT0L system is called an \textit{EDT0L
		language}.
\end{definition}

We recommend the surveys \cite{appl_L_systems_GT} and \cite{GAGTA_chapter} on EDT0L languages and various aspects of group theory for more details, as well as lots of examples.

\begin{example}\label{eg:AlexL} \cite[Example 3.3]{GAGTA_chapter}
The language $L=\{a^{n^2} \mid n\in \mathbb{N}_+\}$ over the alphabet $\Sigma=\{a\}$ is EDT0L. 
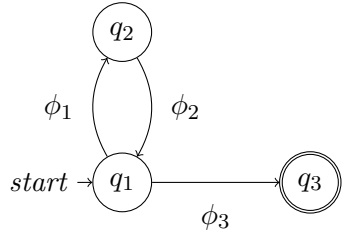
\begin{figure}[!h]
    \centering
   \begin{tikzpicture}  [node distance=2.5cm, scale=.4, every node/.style={circle}]

        \node[draw, initial] (q1)  {\(q_1\)};
        \node[draw, above of=q1,  yshift=-5mm] (q2) {\(q_2\)};
        \node[draw, right of=q1,accepting] (q3)  {\(q_3\)};

\draw[left,->,bend left] (q1) edge node {$\phi_1$} (q2);
\draw[right,->,bend left] (q2) edge node {$\phi_2$} (q1);
\draw[below,->] (q1) edge node {$\phi_3$} (q3);

      \end{tikzpicture}

    \caption{Finite state automaton defining the rational control for $L=\{a^{n^2}\}$}.
    \label{fig:AlexL}
\end{figure}
The extended alphabet is
     $ C = \{ s,  t,  u,  a\}$, the start word is $w_0=tsa$, and the finite set $B=\{\phi_1, \phi_2, \phi_3: C^* \to C^*\}$ is given by
     \[\begin{array}{lll}
\phi_1(s)= su,\\
\phi_2 (t) = at, \phi_2(u)=ua^2,\\
 \phi_3(s) =\phi_3(t)=\phi_3(u)= \varepsilon, \end{array}\] where we use  the convention that $\phi_i$ fixes the elements in $C$ not explicitly specified. The rational control $\mathcal{R}$ is given by the automaton $M$ in Figure \ref{fig:AlexL}, so $\mathcal{R}=L(M) =(\phi_1 \phi_2)^\ast \phi_3$. 
One can check that $(\phi_1\phi_2)^i(tsa)=a^itsua^2ua^4u\dots ua^{2i}a$, which is sent to $a^{(i+1)^2}$ by applying $\phi_3$. It follows that 
the language of the EDT0L system above is $\{a^{n^2} \ | \ n \in \mathbb{N}_+\}$. 

The language $L$ is not context-free, which can be shown by applying the Pumping Lemma for context-free languages (see \cite[Theorem 2.6.17]{groups_langs_aut}).
\end{example}

EDT0L languages have a number of useful closure properties (see for example \cite[Lemma 2.15]{EDT0L_extensions}). Here we only need the following result.

\begin{lemma}\cite[Lemma 2.15]{EDT0L_extensions}\label{lem:EDT0Lclosure}
	 EDT0L languages are closed under concatenation and finite union.
\end{lemma}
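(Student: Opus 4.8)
The plan is to reduce both closure properties to the corresponding closure properties of \emph{regular subsets of the monoid} $\mathrm{End}(C^*)$: a rational control is by definition a regular language of endomorphisms, and regular subsets of any monoid are closed under union and product and are preserved by monoid homomorphisms. The only genuine work is to arrange the alphabets and start words so that the two given systems cannot interfere, and then to express the desired control as a union or product of (homomorphic images of) the original controls.

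For \emph{finite union} it suffices, by induction, to treat two languages $L_i = L(\mathcal{H}_i)$ with $\mathcal{H}_i = (\Sigma, C_i, w_i, \mathcal{R}_i)$, $i=1,2$; after renaming we may assume that the extended alphabets share only the terminal letters, so that the non-terminal parts of $C_1$ and $C_2$ are disjoint. I would introduce a fresh start symbol $S$, set $C = C_1 \cup C_2 \cup \{S\}$ and take $S$ as the new start word, together with two ``selector'' endomorphisms $\psi_1,\psi_2 \in \mathrm{End}(C^*)$ defined by $\psi_i(S) = w_i$ and fixing every other letter. Extending each $\phi \in \mathcal{R}_i$ to $C^*$ by the identity on the letters not in $C_i$ gives a monoid embedding $\mathrm{End}(C_i^*) \hookrightarrow \mathrm{End}(C^*)$, so the extended controls $\mathcal{R}_1', \mathcal{R}_2'$ remain regular. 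The new rational control is then the union of the two families obtained by post-composing $\mathcal{R}_i'$ with $\psi_i$; applying such a composite to $S$ first rewrites $S$ as $w_i$ and then runs the control of $\mathcal{H}_i$, so the resulting language is exactly $L_1 \cup L_2$. Regularity is immediate, since regular subsets are closed under union and under multiplication by the fixed element $\psi_i$.

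For \emph{concatenation} I would instead rename so that $C_1$ and $C_2$ are \emph{entirely} disjoint, giving each system a private copy $\Sigma^{(i)}$ of the terminal alphabet, and take the concatenated start word $w_1 w_2 \in C^*$ with $C = C_1 \sqcup C_2$. Extending each $\phi \in \mathcal{R}_1$ to act as the identity on $C_2$ and each $\psi \in \mathcal{R}_2$ to act as the identity on $C_1$ makes the two families of endomorphisms act on disjoint coordinates, hence commute; applying (the extension of) some $\phi \in \mathcal{R}_1$ and then some $\psi \in \mathcal{R}_2$ to $w_1 w_2$ yields $\phi(w_1)\,\psi(w_2)$. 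A final ``cleanup'' homomorphism $\tau$ that identifies each private copy $\Sigma^{(i)}$ with the genuine terminal alphabet $\Sigma$ is appended to the control, so that the output lies over $\Sigma$ and equals the concatenation of an element of $L_1$ with an element of $L_2$. The new control is the product $\{\tau\}\,\mathcal{R}_2'\,\mathcal{R}_1'$, which is regular because regular subsets are closed under product.

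I expect the main obstacle to be bookkeeping rather than anything conceptual: one must verify that the alphabet separation genuinely prevents the two halves of the concatenation construction from interfering, so that no spurious words over $\Sigma$ are produced, and one must fix a convention for the order in which the rational control composes its endomorphisms and check that pre- and post-composing with the fixed maps $\psi_i$ and $\tau$ respects that convention. Both points reduce to the single clean fact that extending endomorphisms to a larger, disjoint alphabet is a monoid homomorphism and therefore carries regular sets to regular sets.
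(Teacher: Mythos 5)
The paper does not prove this lemma at all --- it simply cites Rozenberg--Salomaa \cite{math_theory_L_systems} --- so there is no in-paper argument to compare against; what you have written is a correct, self-contained proof along the standard textbook lines. Your two constructions are sound: for union, the fresh start symbol $S$ together with the selector maps $\psi_i$ and the disjointness of the non-terminal parts of $C_1,C_2$ guarantee that every element of the new control sends $S$ to exactly some $\phi(w_i)$ with $\phi\in\mathcal{R}_i$, and the new control is rational because rational subsets of $\mathrm{End}(C^*)$ are closed under union and under product with a singleton, while extension-by-identity $\mathrm{End}(C_i^*)\hookrightarrow\mathrm{End}(C^*)$ is a monoid homomorphism and so preserves rationality (concretely: relabel the edges of the automaton for $\mathcal{R}_i$ with the extended endomorphisms). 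For concatenation, the private copies $\Sigma^{(1)},\Sigma^{(2)}$ are exactly the right device: they prevent a map from $\mathcal{R}_2'$ from ``finishing off'' leftover non-terminals of the first factor, so the only words over $\Sigma$ that survive the cleanup $\tau$ are genuine concatenations $uv$ with $u\in L_1$, $v\in L_2$; and the direct computation $\tau(\psi(\phi(w_1w_2)))=\tau(\phi(w_1))\,\tau(\psi(w_2))$ needs only that each extended family fixes the other's alphabet, so even the appeal to commutativity is dispensable. Two cosmetic points: you say ``post-composing $\mathcal{R}_i'$ with $\psi_i$'' but then describe applying $\psi_i$ \emph{first}, i.e.\ forming $\phi\circ\psi_i$ --- the intended semantics is clear from your next sentence, but the terminology should match the composition convention you fix; and note that the paper's Definition of $L(\mathcal{H})$ as literally stated does not intersect with $\Sigma^*$, so you should state once which convention you adopt --- your constructions work under either reading, precisely because of the alphabet separation you impose.
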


The following result is similar to Lemma 2.20(1) of \cite{EvettsLevine}, and it provides a bridge between $n$-regular and EDT0L languages via two natural maps.
\begin{proposition}\label{prop:forgetful}
	Let $L\subset(\Sigma^*)^n$ be an $n$-variable language and suppose $L$ is $n$-regular.
\begin{itemize}
\item[(i)] If $\theta\colon(\Sigma^*)^n\to\Sigma^*;~(w_1,w_2,\ldots,w_n)\mapsto w_1w_2\cdots w_n$ is the map that `forgets' the tuple structure, then $\theta(L)\subset\Sigma^*$ is EDT0L.
	
\item[(ii)]	Similarly, if $\theta_{\#}\colon(\Sigma^*)^n\to(\Sigma\cup {\#})^*;(w_1,w_2,\ldots,w_n)\mapsto w_1\#w_2\#\cdots \#w_n$ is the map that inserts $\#$ between the tuple coordinates, then $\theta_{\#}(L)\subset (\Sigma \cup {\#})^*$ is EDT0L.
\end{itemize}
\end{proposition}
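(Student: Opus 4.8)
The plan is to encode an accepting $n$-$\fsa$ for $L$ directly as an EDT0L system, using $n$ distinct placeholder symbols to keep the coordinates separate during the (parallel) rewriting. Fix an $n$-$\fsa$ $\mathcal{A}=(\Sigma,\Gamma,s_0,F)$ recognising $L$ as in Definition \ref{def:fsa}. First I would introduce fresh symbols $c_1,\dots,c_n$, one per coordinate, and set the extended alphabet $C=\Sigma\cup\{c_1,\dots,c_n\}$, the terminal alphabet $\Sigma$, and the start word $w_0=c_1c_2\cdots c_n$. For each letter $x\in\Sigma$ and each coordinate $i$ I define the elementary endomorphism $\alpha_{x,i}\colon c_i\mapsto xc_i$ that fixes every other symbol of $C$, together with a single cleanup endomorphism $\beta\colon c_i\mapsto\varepsilon$ (for all $i$) that fixes $\Sigma$.

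Next I would build the rational control $\mathcal{R}$ so that its accepting paths mirror those of $\Gamma$. Take the control automaton to have the states of $\Gamma$ together with one new accepting state $s_f$; replace each edge of $\Gamma$ labelled by the tuple with single non-$\varepsilon$ entry $x$ in coordinate $i$ by an edge labelled $\alpha_{x,i}$, replace each fully-$\varepsilon$ edge by an edge labelled with the identity, and add an edge labelled $\beta$ from every state of $F$ to $s_f$, which has no outgoing edges. Composing the endomorphisms along a path so that the first one read is applied first, a path of $\mathcal{A}$ reading the tuple $(w_1,\dots,w_n)$ transforms $w_0$ into $w_1c_1w_2c_2\cdots w_nc_n$: each $\alpha_{x,i}$ appends $x$ immediately to the left of the unique surviving copy of $c_i$, so the letters accumulate in each coordinate in the order they are read. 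The compulsory final application of $\beta$ then deletes all markers, leaving exactly $w_1w_2\cdots w_n=\theta(w_1,\dots,w_n)$, whence $\{\phi(w_0)\mid\phi\in\mathcal{R}\}=\theta(L)$ and part (i) follows. For part (ii) I would keep everything identical but replace $\beta$ by the cleanup $\beta'\colon c_i\mapsto\#$ for $i<n$ and $c_n\mapsto\varepsilon$, working over the terminal alphabet $\Sigma\cup\{\#\}$, so that the markers become the $n-1$ separators and the output is $w_1\#w_2\#\cdots\#w_n=\theta_{\#}(w_1,\dots,w_n)$.

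The point that needs care, and the only genuine obstacle, is that an EDT0L endomorphism acts on \emph{all} occurrences of a symbol simultaneously, so one must ensure that writing into coordinate $i$ never disturbs the others. This is precisely why a \emph{distinct} marker is used for each coordinate: every $c_i$ occurs exactly once in $w_0$ and stays a single occurrence throughout (the map $\alpha_{x,i}$ preserves one copy of $c_i$ and every other elementary map fixes it), while terminal letters, once written, are fixed by all of the $\alpha_{x,j}$. Hence the $n$ coordinates are rewritten independently and no interference can occur. The remaining verifications are routine: that $\mathcal{R}$ is a regular language over the finite set of elementary endomorphisms, that $\beta$ (respectively $\beta'$) is forced to occur exactly once and last, and that an empty coordinate is handled correctly since its marker is simply cleaned up. Finally, one should fix the composition convention consistently with the ambient EDT0L framework; the construction adapts to the opposite convention with no essential change, by building each coordinate from the other end via $c_i\mapsto c_ix$.
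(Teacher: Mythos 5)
Your construction is essentially identical to the paper's proof: the same extended alphabet with one placeholder per coordinate, the same start word, the same endomorphisms $c_i\mapsto xc_i$ mirroring the edges of the $n$-fsa, and the same final cleanup map (deleting markers for (i), turning them into $\#$ for (ii)). The additional remarks on marker uniqueness and the composition convention are correct and only make explicit what the paper leaves implicit.
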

\begin{proof}
	Let $(\Sigma,\Gamma,s_0,F)$ be an asynchronous $n$-variable fsa that accepts $L$ (possibly non-deterministic). 
	
	To prove (i) we construct an EDT0L system $\mathcal{H}$ that accepts $\theta(L)$ with terminal alphabet $\Sigma$, extended alphabet $C=\Sigma\cup\{\perp_1,\perp_2,\ldots,\perp_n\}$, and start word $\perp_1\perp_2\cdots\perp_n$.
	For each $\mathbf{w}=(w_1,\ldots,w_n)\in(\Sigma\cup\{\varepsilon\})^n$ which is a label of an edge in $\Gamma$, define $\phi_\mathbf{w}\in\mathrm{End}(C^*)$ by $\perp_i\mapsto w_i\perp_i$ for each $i$ (and fixing all elements of $\Sigma$). Furthermore, define $\varpi\in\mathrm{End}(C^*)$ to be the map that sends each $\perp_i$ to the empty word and fixes each element of $\Sigma$.
	
	Construct the rational control $\mathcal{R}$ of $\mathcal{H}$ from $\Gamma$ as follows. Replace each edge label $\mathbf{w}$ with the endomorphism $\phi_{\mathbf{w}}$. Add additional edges from $F$, each labelled by $\varpi$, to a new vertex. Without loss of generality one can assume that an fsa has a unique accept state, and we will make this new vertex to be the unique accept state; $s_0$ remains the start state. The language accepted by this new (1-variable) fsa is a regular language $\mathcal{R}\subset\mathrm{End}(C^*)$.
	
	For any element $\mathbf{u}\in L$, the corresponding path in $\Gamma$ is simulated in $\mathcal{R}$ by a sequence of $\phi_{\mathrm{w}}$s which produce the components of the tuple $\mathbf{u}$, followed by $\varpi$ which removes the $\perp_i$s, resulting in the element $\theta(\mathbf{u})$. Conversely, each element $h$ of the language of $\mathcal{H}$ arises from $\theta^{-1}(h)$ in $L$. So $\theta(L)$ is the language of the EDT0L system $\mathcal{H}$.
	
For (ii), to obtain $\theta_{\#}(L)$ instead of $\theta(L)$ we proceed as above, but apply the map $\perp_i\mapsto\#$ for each $i\in\{1,\ldots n-1\}$ in place of \(\varpi\). 
	\end{proof}

	By considering the case \(n=1\), it is clear that the converse of Proposition \ref{prop:forgetful} does not hold since there are EDT0L languages which are not regular, such as Example \ref{eg:AlexL}, 

\section{Equivalences between types of sets in virtually abelian groups}\label{sec:eqv_sets}

In this section we recall several important types of sets in virtually abelian groups: polyhedral, coset-wise polyhedral, rational, and semilinear, and 
establish equivalences between them. In Proposition \ref{prop:growth2} we also prove the rationality of their growth series with respect to the natural basis (and $\ell_1$ norm) of the ambient group.

It is a standard fact that we may assume that any finitely generated virtually abelian group is a finite extension of a finitely generated free abelian group. Throughout the section we let $G$ be a finitely generated virtually abelian group with free abelian normal subgroup $\Z^k$ of finite index, and use the short exact sequence 
\begin{equation}\label{ses}
	1\to\Z^k\to G\to \Delta\to 1
\end{equation}
for some finite group $\Delta$.

\subsection{Polyhedral sets}

Polyhedral sets are subsets of free abelian groups of finite rank, and play an important role in virtually abelian groups. Benson successfully translated sets of geodesic representatives in virtually abelian groups into polyhedral sets, which can be used to compute growth in effective ways.

\begin{definition}[Polyhedral sets \cite{Benson}]\label{def:PolSets}
	Let $r\in\N$, and let $\cdot$ denote the Euclidean scalar product.
	\begin{enumerate}
		\item[(i)] Any subset of $\Z^r$ of the form 
		\begin{enumerate}
		\item[(1)] $\{\mathbf{z}\in\Z^r\mid\mathbf{u}\cdot\mathbf{z}=a\}$, 
		\item[(2)] $\{\mathbf{z}\in\Z^r\mid\mathbf{u}\cdot\mathbf{z}\equiv a\mod b\}$, or
		\item[(3)] $\{\mathbf{z}\in\Z^r\mid\mathbf{u}\cdot\mathbf{z}>a\}$ 
		\end{enumerate}
		for $\mathbf{u}\in\Z^r$, $a\in\Z$, $b\in\N$, is an \emph{elementary region}, of type (1), (2), and (3) respectively;
		
		\item[(ii)]  any finite intersection of elementary regions will be called a \emph{basic polyhedral set};
		\item[(iii)]  any finite union of basic polyhedral sets will be called a \emph{polyhedral set}.
	\end{enumerate}
\end{definition}

Polyhedral sets are useful because they behave well under various set operations, as below.
 
\begin{proposition}\cite[Proposition 13.1 and Remark 13.2]{Benson}  \label{prop:polyclosed}
	Let $P, \ Q\subseteq\Z^r$ and $R\subseteq\Z^s$ be polyhedral sets for some positive integers $r$ and $s$. Then the following are also polyhedral: $P\cup Q\subseteq\Z^r$, $P\cap Q\subseteq\Z^r$, $\Z^r\setminus P$, $ P\times R\subseteq\Z^{r+s}$.
\end{proposition}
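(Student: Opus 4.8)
The plan is to argue directly from the syntactic description: a polyhedral set is a finite union of basic polyhedral sets, each of which is a finite intersection of elementary regions of types (1)--(3). Write $P=\bigcup_i B_i\subseteq\Z^r$ and $Q=\bigcup_j C_j\subseteq\Z^r$, with each $B_i,C_j$ basic polyhedral. Closure under union is then immediate, since $P\cup Q=\bigcup_i B_i\cup\bigcup_j C_j$ is again a finite union of basic polyhedral sets. For intersection I would distribute, $P\cap Q=\bigcup_{i,j}(B_i\cap C_j)$, and observe that each $B_i\cap C_j$, being an intersection of two finite intersections of elementary regions, is itself a finite intersection of elementary regions, hence basic polyhedral; so $P\cap Q$ is polyhedral.

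The main work — and the step I expect to be the real obstacle — is the complement, which I would build up in three layers. First, I would complement a single elementary region. For type (1), $\Z^r\setminus\{\mathbf{z}:\mathbf{u}\cdot\mathbf{z}=a\}=\{\mathbf{u}\cdot\mathbf{z}>a\}\cup\{(-\mathbf{u})\cdot\mathbf{z}>-a\}$, a union of two type (3) regions, using that over $\Z$ the strict inequality $\mathbf{u}\cdot\mathbf{z}<a$ coincides with $(-\mathbf{u})\cdot\mathbf{z}>-a$. For type (2), the complement of $\{\mathbf{u}\cdot\mathbf{z}\equiv a\bmod b\}$ is the finite union $\bigcup_{c}\{\mathbf{u}\cdot\mathbf{z}\equiv c\bmod b\}$ over the residues $c\not\equiv a\bmod b$ with $0\le c<b$, a finite union of type (2) regions. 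For type (3), $\Z^r\setminus\{\mathbf{u}\cdot\mathbf{z}>a\}=\{\mathbf{u}\cdot\mathbf{z}=a\}\cup\{(-\mathbf{u})\cdot\mathbf{z}>-a\}$, a union of a type (1) and a type (3) region. In every case the complement of an elementary region is polyhedral. Second, for a basic polyhedral set $B=\bigcap_k E_k$, De Morgan gives $\Z^r\setminus B=\bigcup_k(\Z^r\setminus E_k)$, a finite union of polyhedral sets, hence polyhedral by the union case. Third, for $P=\bigcup_i B_i$, De Morgan gives $\Z^r\setminus P=\bigcap_i(\Z^r\setminus B_i)$, a finite intersection of polyhedral sets, hence polyhedral by the intersection case.

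For the Cartesian product it suffices, after distributing as $P\times R=\bigcup_{i,j}(B_i\times C_j)$, to show that the product of two basic polyhedral sets is basic polyhedral. Writing $B=\bigcap_k E_k\subseteq\Z^r$ and $C=\bigcap_l F_l\subseteq\Z^s$, I would use $B\times C=\bigl(\bigcap_k (E_k\times\Z^s)\bigr)\cap\bigl(\bigcap_l (\Z^r\times F_l)\bigr)$ and note that padding the defining vector $\mathbf{u}$ with zeros in the missing coordinates turns each $E_k\times\Z^s$ (resp. $\Z^r\times F_l$) into an elementary region of the same type in $\Z^{r+s}$. Thus $B\times C$ is a finite intersection of elementary regions in $\Z^{r+s}$, so $P\times R$ is polyhedral.

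The only delicate point is the complement of the equality and inequality regions: there I rely on the coefficient vector being allowed to range over $\Z^r$, so that $-\mathbf{u}$ is admissible, and on the integrality of the lattice to trade non-strict for strict inequalities. Everything else — the congruence complement, distributivity, and the zero-padding in the product — is routine bookkeeping, so the entire argument reduces to the three elementary-region complement computations together with repeated use of the already-established union and intersection closures.
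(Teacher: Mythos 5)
Your argument is correct and is essentially the proof the paper delegates to Benson (Proposition 13.1 and Remark 13.2): closure under union and intersection by distributing over the finite unions, complement by complementing each elementary region and applying De Morgan twice, and product by zero-padding the coefficient vectors. The one caveat you flag is the right one --- the ``$\mathbf{u}\in\N$'' in Definition \ref{def:PolSets} is a typo for $\mathbf{u}\in\Z^r$ (as in Benson and as used later in the paper, e.g.\ in Proposition \ref{Prop:PeqS}), so $-\mathbf{u}$ is an admissible coefficient vector and your complement computations for regions of types (1) and (3) go through.
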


One important result, used to express sets in virtually abelian groups in terms of polyhedral sets, is the invariance of polyhedral sets under affine maps, as stated in Proposition \ref{prop:polyaffine}.
\begin{definition}
	We call a map $A\colon\Z^r\to\Z^{s}$ an \emph{integer affine map} if there exists an $s\times r$ matrix $M$ with integer entries and some constant $q\in\Z^{s}$ such that $A(p)=Mp+q$ for $p\in\Z^r$.
\end{definition}
\begin{proposition}\cite[Propositions 13.7 and 13.8]{Benson}\label{prop:polyaffine}
	Let $A$ be an integer affine map. If $ P\subseteq\Z^r$ is a polyhedral set then the image $A(P)\subseteq\Z^{s}$ is a polyhedral set. If $Q\subseteq\Z^{s}$ is a polyhedral set then the preimage $A^{-1}(Q)\subseteq\Z^r$ is a polyhedral set.
\end{proposition}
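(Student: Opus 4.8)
The plan is to prove both statements of Proposition \ref{prop:polyaffine} by reducing to the behaviour of elementary regions under affine maps, and then invoking the closure properties of Proposition \ref{prop:polyclosed} to handle the union, intersection, and complement structure. Since a polyhedral set is by definition a finite union of finite intersections of elementary regions, and since image commutes with union while preimage commutes with union, intersection, and complement, the two halves of the statement split naturally.

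For the \textbf{preimage} direction I would start, as it is the cleaner of the two. Given an affine map $A(p)=Mp+q$ and an elementary region $E\subseteq\Z^s$ of one of the three types, I would show directly that $A^{-1}(E)$ is again an elementary region (or a basic polyhedral set). For a type (1) region $\{\mathbf{z}\mid\mathbf{u}\cdot\mathbf{z}=a\}$, the condition $\mathbf{u}\cdot(Mp+q)=a$ rewrites as $(M^\T\mathbf{u})\cdot p = a-\mathbf{u}\cdot q$, which is precisely a type (1) region in $\Z^r$; the congruence (type (2)) and strict-inequality (type (3)) cases are identical, with a coefficient vector $M^\T\mathbf{u}$ and a shifted constant. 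Because preimage distributes over $\cap$ and $\cup$, the preimage of a basic polyhedral set is a finite intersection of such regions (hence basic polyhedral), and the preimage of a polyhedral set is a finite union of these, hence polyhedral. The only subtlety to note is that the defining inequalities stay within the allowed integer-affine form, so no genuine obstruction arises here.

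The \textbf{image} direction is where the real work lies and is the main obstacle. The image of an elementary region under $A$ need not be an elementary region, because projecting an integer point set can create genuinely polyhedral (union-of-basic) behaviour and, crucially, can introduce congruence conditions even when none were present: the image of $\Z^r$ under $p\mapsto Mp$ is the sublattice $M\Z^r$, which is typically cut out by congruences rather than being all of $\Z^s$. My plan is to factor $A$ into an injective part and a projection, or more cleanly to reduce to two special cases whose composition gives every affine map: (a) the graph-type embedding $p\mapsto(p,Mp+q)$ into $\Z^{r+s}$, whose image is a basic polyhedral set (each coordinate of $Mp+q$ being an affine function of $p$ gives type (1) equations, so the graph of $A$ over a polyhedral domain $P$ is $\bigl(P\times\Z^s\bigr)$ intersected with these equations, which is polyhedral by Proposition \ref{prop:polyclosed}); and (b) the coordinate projection $\pi\colon\Z^{r+s}\to\Z^s$ forgetting the first $r$ coordinates. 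Then $A(P)=\pi\bigl(\text{graph of }A\text{ over }P\bigr)$, so it suffices to prove that projections of polyhedral sets are polyhedral.

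Thus the crux reduces to the single projection lemma: the image of a polyhedral set under a coordinate projection is polyhedral. I expect this to require an elimination argument in the style of quantifier elimination for Presburger arithmetic, since $\pi(P)=\{\mathbf{z}\in\Z^s\mid\exists\,p\in\Z^r\text{ with }(p,\mathbf{z})\in P\}$ and eliminating the existential quantifier over the integer variable $p$ is exactly the step that manufactures new congruence (type (2)) constraints alongside the inequalities. Concretely, I would eliminate one projected coordinate at a time, combining pairs of inequalities that bound the eliminated variable from above and below (an integer Fourier--Motzkin step) and carefully tracking the divisibility conditions that guarantee an integer solution exists in the resulting interval; the congruence regions of Definition \ref{def:PolSets}(i)(2) are precisely what absorb these divisibility conditions, which is why the class of polyhedral sets is rich enough to be closed under projection. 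Once this elimination lemma is in hand, statements (a), (b), and the reduction above combine to give that $A(P)$ is polyhedral, completing the proof.
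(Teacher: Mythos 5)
Your argument is sound, and it is worth noting that the paper itself offers no proof of this proposition: it is quoted directly from Benson (Propositions 13.7 and 13.8), so the only comparison available is with Benson's original argument rather than anything in this text. Your preimage computation is exactly right and is genuinely the easy half: substituting $A(p)=Mp+q$ into each of the three types of elementary region yields an elementary region of the same type with coefficient vector $M^\T\mathbf{u}$ and shifted constant, and preimage commutes with the Boolean structure. Your reduction of the image direction to a projection lemma via the graph embedding $p\mapsto(p,Mp+q)$ is also the standard (and essentially Benson's) route, and you correctly locate the entire content of the proposition in the closure of polyhedral sets under coordinate projection, i.e.\ in eliminating an existential integer quantifier. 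That elimination step --- an integer Fourier--Motzkin argument in which the type (2) congruence regions absorb the divisibility conditions arising when one asks whether a rational interval contains an integer in a prescribed residue class --- is exactly Presburger quantifier elimination, and is where all the work of Benson's Propositions 13.7--13.8 lives; you have named the right mechanism but left it as a plan rather than carrying it out, so if this were to be written in full that single lemma is what still needs to be proved. One caution if you do write it out: do not shortcut the projection lemma by passing through semilinear sets (where the image under an affine map is easy, as in Lemma \ref{lem:semilinearaffine}), because the paper's proof that polyhedral and semilinear sets coincide (Proposition \ref{Prop:PeqS}) itself invokes the present proposition, so that route would be circular in this development.
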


\begin{remark}\label{rem:pol}\leavevmode
	\begin{itemize}
		\item[(i)] The projection of a polyhedral set onto any subset of its coordinates is polyhedral (since such a projection is an integer affine map).
		\item[(ii)] Any subgroup of \(\Z^r\) is polyhedral.
		\item[(iii)] Elementary regions of type (1) and (2) are cosets of the subgroups $\{\mathbf{z}\in\Z^r\mid\mathbf{u}\cdot\mathbf{z}=0\}$ and $\{\mathbf{z}\in\Z^r\mid\mathbf{u}\cdot\mathbf{z}\equiv 0\mod b\}$ of $\Z^r$, respectively.
	\end{itemize}
\end{remark}

\subsection{Rational, semilinear and $N$-semilinear sets}\label{sec:ratsemilin}

We start this section by defining rational sets of elements in groups and monoids. An important feature of rational sets in groups is that the rationality of a set is independent of the choice of finite generating set, since regular languages are closed under preimages of monoid homomorphisms.

\begin{definition}
	Let $G$ be a finitely generated monoid or group, with finite generating set $S$. A subset $R\subseteq G$ is called \emph{rational} if there exists a regular language $L\subseteq S^*$ whose image in $G$ with respect to the natural projection $\varpi: S^* \mapsto G$ is the set $R$, that is, $\varpi(L)=R$.
\end{definition}

We next define semilinear and linear sets, which are widely used in computer science, and which in fact coincide with rational sets in the context of commutative monoids, as specified in Theorem \ref{thm:ES}(1).

\begin{definition}
	Let $M$ be a commutative monoid (written additively). A subset $X\subseteq M$ is \emph{linear} if there exists some finite set $B\subset M$, acting as a basis, such that $X=a+B^*$ for some $a\in M$. A subset is \emph{semilinear} if it is a finite union of linear sets.
\end{definition}

The following theorem collects key results about semilinear subsets of commutative monoids.
\begin{theorem}\label{thm:ES}
	Let \(M\) be a commutative monoid.
	\begin{enumerate}
		\item A subset \(X\subset M\) is rational if and only if it is semilinear. \cite[Section 2]{RationalSets}
		\item If $X, Y\subset M$ are rational sets, then so is $X\cap Y$. \cite[Theorem III]{RationalSets}
		\item If $M'$ is a finitely generated submonoid of a commutative monoid $M$, $X\subset M'$ a rational subset of $M$, then $X$ is a rational subset of $M'$. \cite[Corollary III.3]{RationalSets}
	\end{enumerate}
\end{theorem}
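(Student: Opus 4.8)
The plan is to deduce all three parts from the characterisation in part (1), reducing the deeper statements to the arithmetic of the free commutative monoid $\N^k$. Throughout I would use two reduction principles. First, any rational subset of $M$ is obtained from finitely many finite sets by union, sum, and submonoid-generation, so it lies in the finitely generated submonoid $N\le M$ generated by those finite sets; hence for parts (2) and (3) it suffices to treat finitely generated $M$. Second, every finitely generated commutative monoid $N$ is a quotient $\varphi\colon\N^k\twoheadrightarrow N$ of a free commutative monoid, and semilinearity transfers along $\varphi$ in both directions, so statements about semilinear sets in $N$ reduce to statements in $\N^k$, where the semilinear sets coincide with the Presburger-definable sets and are therefore closed under all Boolean operations.

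For part (1) I would argue directly in $M$, without the reduction. The implication semilinear $\Rightarrow$ rational is immediate: a linear set $a+B^*$ with $B=\{b_1,\dots,b_m\}$ is the image in $M$ of the regular language $a\,\{b_1,\dots,b_m\}^*$, and rational sets are closed under finite union. For the converse I would show that the semilinear sets contain every finite set and are closed under the three rational operations. Finite sets and unions are immediate. Closure under sum uses commutativity decisively: $(a+B^*)+(c+D^*)=(a+c)+(B\cup D)^*$ because $B^*+D^*=(B\cup D)^*$ in a commutative monoid, and this extends to finite unions distributively. Closure under submonoid-generation is the only subtle point, and commutativity again rescues it: for a single linear set one computes $(a+B^*)^*=\{0\}\cup\big(a+(\{a\}\cup B)^*\big)$, and for a finite union $\big(\bigcup_i X_i\big)^*=\sum_i X_i^*$, so the star of a semilinear set is a sum of semilinear sets, hence semilinear.

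Part (2) carries the real content and is where I expect the main obstacle. Closure under intersection fails for rational sets in free monoids on two generators, so commutativity must enter in an essential way. Using part (1) and the first reduction I may assume $X,Y$ are semilinear subsets of a common finitely generated $N$; transferring along $\varphi\colon\N^k\twoheadrightarrow N$ reduces the claim to the statement that the intersection of two semilinear subsets of $\N^k$ is semilinear. This is the Ginsburg--Spanier theorem (equivalently, closure of Presburger-definable sets under intersection), and any self-contained proof must pass through the genuinely hard step of showing that the intersection of two linear sets $(a+B^*)\cap(c+D^*)$ is semilinear, which amounts to describing the nonnegative integer solutions of a system of linear Diophantine equations as a semilinear set.

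Finally, part (3) follows from (1) and (2). With $M'=\langle g_1,\dots,g_s\rangle$ finitely generated and $X\subseteq M'$ rational in $M$, I would realise $X$ as a semilinear set inside a finitely generated $N\supseteq M'$ and transfer to $\N^k$ via a surjection chosen so that $M'$ is the image of a coordinate submonoid; intersecting the (semilinear) preimage of $X$ with the preimage of $M'$ and applying part (2) lets me re-express $X$ using only data supported on generators lying in $M'$, giving a semilinear, and hence by part (1) rational, description of $X$ as a subset of $M'$. The finite generation of $M'$ is precisely what makes this descent work.
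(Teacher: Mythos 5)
The paper does not actually prove this theorem: all three parts are quoted from Eilenberg and Sch\"utzenberger, so there is no internal proof to compare against and your proposal has to be judged on its own. Part (1) is correct and is essentially the standard argument: semilinear implies rational is immediate, and for the converse you check that the semilinear sets contain the finite sets and are closed under union, sum and star; the identities $(a+B^*)+(c+D^*)=(a+c)+(B\cup D)^*$, $(a+B^*)^*=\{0\}\cup\bigl(a+(\{a\}\cup B)^*\bigr)$ and $\bigl(\bigcup_i X_i\bigr)^*=\sum_i X_i^*$ are all valid in a commutative monoid.

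The gap is in the reduction step for parts (2) and (3), namely the claim that ``semilinearity transfers along $\varphi\colon\N^k\twoheadrightarrow N$ in both directions.'' The image direction is fine, but what your argument actually needs is preimages: if you merely choose semilinear lifts $X',Y'\subset\N^k$ with $\varphi(X')=X$ and $\varphi(Y')=Y$, then $\varphi(X'\cap Y')$ is in general a proper subset of $X\cap Y$, because an element of $X\cap Y$ may have one preimage lying in $X'$ and a different one lying in $Y'$. To repair this you must show that $\varphi^{-1}(X)$ itself is semilinear, and that is not routine: it requires knowing that the congruence $\ker\varphi\subset\N^k\times\N^k$ is a semilinear subset of $\N^{2k}$ --- a theorem of Eilenberg and Sch\"utzenberger which rests on R\'edei's finite generation of congruences on $\N^k$ but does not follow from it formally, since transitive closures of semilinear relations are not semilinear in general --- and then Ginsburg--Spanier in $\N^{2k}$ followed by a projection. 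So the hard content is not only the intersection of two linear sets in $\N^k$, which you correctly flag, but also the semilinearity of congruence classes, which you never address. The same missing ingredient undermines part (3): to make ``re-express $X$ using only data supported on generators lying in $M'$'' precise you again need $\varphi^{-1}(X)$ to be semilinear, and you should note that a linear component $u+C^*$ of a lift contained in $\varphi^{-1}(M')$ need not have periods mapping into $M'$. As written, the reduction silently assumes most of the content of Theorem III.
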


We will consider the commutative monoid $\Z^k$ generated by $\{\pm e_1,\ldots, \pm e_k\}$, where $e_i$ denotes the $i$-th standard basis vector, and establish properties of semilinear sets in $\Z^k$.

\begin{lemma}\label{lem:semilinearaffine}
	The image of any semilinear set $P \subseteq \Z^k$ under an integer affine map \(\Z^k\to\Z^k\) is semilinear.
\end{lemma}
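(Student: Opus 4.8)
The plan is to exploit the definition of a semilinear set as a finite union of linear sets and reduce everything to the behaviour of an affine map on a single linear set. Since an affine map $A(\mathbf{p}) = M\mathbf{p} + \mathbf{q}$ commutes with finite unions, that is, $A(X_1 \cup \cdots \cup X_m) = A(X_1) \cup \cdots \cup A(X_m)$, and since semilinear sets are closed under finite union by definition, it suffices to prove that the image of a single linear set is semilinear. So first I would write an arbitrary semilinear set as $X = \bigcup_{j} X_j$ with each $X_j$ linear, and observe that I only need to show each $A(X_j)$ is semilinear.

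Next I would take a linear set $X = \mathbf{a} + B^*$, where $\mathbf{a} \in \Z^k$ and $B = \{\mathbf{b}_1, \ldots, \mathbf{b}_t\} \subset \Z^k$ is finite, and compute its image directly. A general element of $X$ has the form $\mathbf{a} + \sum_{i=1}^t c_i \mathbf{b}_i$ for nonnegative integers $c_i$ (this is what $B^*$ means in the additively written commutative monoid $\Z^k$). Applying $A$ gives
\[
A\!\left(\mathbf{a} + \sum_{i=1}^t c_i \mathbf{b}_i\right) = M\mathbf{a} + \mathbf{q} + \sum_{i=1}^t c_i\, M\mathbf{b}_i,
\]
using linearity of the matrix part and additivity. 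Setting $\mathbf{a}' = M\mathbf{a} + \mathbf{q}$ and $B' = \{M\mathbf{b}_1, \ldots, M\mathbf{b}_t\}$, the right-hand side ranges over exactly $\mathbf{a}' + (B')^*$ as the $c_i$ range over all nonnegative integers. Hence $A(X) = \mathbf{a}' + (B')^*$ is again linear, with $B'$ finite.

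Combining the two steps, $A(X) = \bigcup_j A(X_j)$ is a finite union of linear sets, hence semilinear, which is exactly the claim. The argument is essentially a direct unwinding of definitions, so I do not anticipate a genuine obstacle; the only point requiring a little care is making sure the affine translation $\mathbf{q}$ is absorbed into the new base point $\mathbf{a}'$ rather than disturbing the generator set $B'$, and checking that when $c_i = 0$ for all $i$ we correctly recover the base point (so that $A(\mathbf{a}) = \mathbf{a}'$ lies in the image). One could alternatively derive this from the earlier results by invoking Theorem \ref{thm:ES}(1) to pass to rational sets and using that rational subsets of monoids are preserved by monoid homomorphisms, but here the affine map is not a homomorphism because of the translation term, so the direct computation above is cleaner and self-contained.
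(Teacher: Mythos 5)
Your proposal is correct and follows essentially the same route as the paper's proof: reduce to a single linear set $a+B^*$ via the compatibility of affine maps with finite unions, then compute directly that the image is $(Ma+q)+\{Mb_1,\ldots,Mb_t\}^*$, which is again linear. No further comment is needed.
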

\begin{proof}
	It is enough to prove this for an arbitrary linear set $L=a+\{b_1, b_2, \dots, b_r\}^*=a+\N b_1 + \cdots + \N b_r$. Consider the integer affine map $z\mapsto Cz+d$, where $C$ is an integer-valued $k\times k$ matrix and $d\in\Z^k$. We have
	\begin{align*}
		C(L)=C(a+\N b_1 + \cdots + \N b_r) +d = C(a)+d +\N C(b_1) + \cdots + \N C(b_r),
	\end{align*}
	which is a linear set. Since a finite union of linear sets is sent by $C$ to a finite union of images of those linear sets under $C$, we get that integer affine maps preserve semilinear sets.
\end{proof}

From Theorem IV of \cite{RationalSets}, we have the following. 
\begin{lemma} \label{lem:disjoint}
	Any semilinear subset of \(\Z^k\) can be expressed as a disjoint union of linear sets \(\bigcup_{i=1}^d(c_i+D_i^*)\) where the elements of each \(D_i\) are linearly independent.
\end{lemma}

We can now show that semilinear sets are precisely the polyhedral sets of Definition \ref{def:PolSets}.
\begin{proposition}\label{Prop:PeqS}
	A subset of \(\Z^k\) is polyhedral if and only if it is semilinear.
\end{proposition}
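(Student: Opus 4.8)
The plan is to prove both directions of the equivalence separately, using the closure properties already established. For the direction \emph{polyhedral $\Rightarrow$ semilinear}, I would argue by induction on the structure of a polyhedral set: since the semilinear sets are closed under finite union (by definition) and finite intersection (by Theorem \ref{thm:ES}(2), as semilinear equals rational in the commutative monoid $\Z^k$), it suffices to show that each of the three types of elementary regions is semilinear. For a hyperplane $\{\mathbf{z}\mid\mathbf{u}\cdot\mathbf{z}=a\}$ (type (1)), I would use Remark \ref{rem:pol}(iii): this is a coset of the subgroup $\ker(\mathbf{z}\mapsto\mathbf{u}\cdot\mathbf{z})$, which is itself polyhedral and finitely generated as an abelian group, hence semilinear (writing the subgroup as $\N b_1+\cdots+\N b_r$ where the $b_i$ are the generators together with their negatives). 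The congruence region (type (2)) is handled identically, being a coset of $\{\mathbf{z}\mid\mathbf{u}\cdot\mathbf{z}\equiv 0\bmod b\}$. For the halfspace $\{\mathbf{z}\mid\mathbf{u}\cdot\mathbf{z}>a\}$ (type (3)), I would exhibit an explicit semilinear decomposition, for instance by partitioning according to which coordinate realises the strict inequality and writing each piece as a base point plus nonnegative combinations of basis vectors and a distinguished ``positive'' direction.

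For the converse \emph{semilinear $\Rightarrow$ polyhedral}, it suffices by Proposition \ref{prop:polyclosed} (closure of polyhedral sets under finite union) to show that a single linear set $L=a+\{b_1,\ldots,b_r\}^*$ is polyhedral. Here the natural move is to first apply Lemma \ref{lem:linindep} to reduce to the case where the generators $b_1,\ldots,b_r$ are \emph{linearly independent}, since polyhedral sets are closed under finite union and a linear set is a finite union of such ``independent'' linear sets. Once the $b_i$ are linearly independent, the set $a+\N b_1+\cdots+\N b_r$ is the image of the standard orthant $\N^r=\{(n_1,\ldots,n_r)\in\Z^r\mid n_i\geq 0\}$ under the integer affine map $A\colon\Z^r\to\Z^k$ sending $\mathbf{n}\mapsto a+\sum_i n_i b_i$. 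The orthant $\N^r$ is manifestly polyhedral (a finite intersection of type (3) elementary regions, up to shifting the strict inequalities to capture $n_i\geq 0$), so by Proposition \ref{prop:polyaffine} its image $A(\N^r)=L$ is polyhedral.

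The main obstacle I anticipate is the reduction to linearly independent generators in the converse direction. Without linear independence, the affine map $A$ above is still well-defined, but $A(\N^r)$ need not equal the image one wants to control, and more seriously, the matrix $M$ in the affine map has full column rank exactly when the $b_i$ are independent, which is what lets Proposition \ref{prop:polyaffine} apply cleanly to an orthant. This is precisely why Lemma \ref{lem:linindep} was stated beforehand; the real content is recognising that independence is the hypothesis that makes the orthant-image argument go through. A secondary technical point is the careful translation of the nonstrict inequalities $n_i\geq 0$ into the strict type-(3) elementary regions of Definition \ref{def:PolSets}, but this is routine (e.g.\ $n_i\geq 0$ is $n_i>-1$ over $\Z$).
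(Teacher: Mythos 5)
Your converse direction (semilinear $\Rightarrow$ polyhedral) is correct and is essentially the paper's argument: reduce to a single linear set, invoke Lemma \ref{lem:linindep}, and realise the result as the image of an orthant under an integer affine map via Proposition \ref{prop:polyaffine}. (In fact your stated ``main obstacle'' is not one: Proposition \ref{prop:polyaffine} places no rank hypothesis on the map, and $a+\{b_1,\ldots,b_r\}^*$ is already the image of the polyhedral set $\N^r\subset\Z^r$ under $\mathbf{n}\mapsto a+\sum_i n_ib_i$ whether or not the $b_i$ are independent, so Lemma \ref{lem:linindep} could be skipped entirely here.) The forward direction for elementary regions of types (1) and (2) also matches the paper.

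The genuine gap is your treatment of type (3), which is exactly where the paper has to do real work. For a general halfspace $E=\{\mathbf{z}\in\Z^k\mid\mathbf{u}\cdot\mathbf{z}>a\}$ there is no ``coordinate that realises the strict inequality'', so the proposed partition does not exist, and a base point plus nonnegative combinations of standard basis vectors together with one extra direction only describes $E$ when $\mathbf{u}$ is a multiple of a standard basis vector. This step does not follow formally from the closure properties you list, and something must be done for arbitrary $\mathbf{u}$. The paper changes coordinates: it picks an integer matrix $M$ with $Ma^{\T}=\lambda e_1^{\T}$, observes that the coordinate halfspace $E_0=\{e_1,\pm e_2,\ldots,\pm e_k\}^*$ satisfies $E_0M\subseteq E$, and writes $E=\bigcup_{v\in P_M}(E_0+v)M$ as a finite union indexed by a fundamental domain of the sublattice, then applies Lemma \ref{lem:semilinearaffine}. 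Alternatively (and closer in spirit to your ``distinguished positive direction''), one can let $g=\gcd(u_1,\ldots,u_k)$, choose $\mathbf{v}$ with $\mathbf{u}\cdot\mathbf{v}=g$ and a finite monoid generating set $B$ of the kernel subgroup $\{\mathbf{z}\mid\mathbf{u}\cdot\mathbf{z}=0\}$, and check that $E=\mathbf{z}_0+(B\cup\{\mathbf{v}\})^*$ for any $\mathbf{z}_0$ with $\mathbf{u}\cdot\mathbf{z}_0$ equal to the least value of $\mathbf{u}\cdot\mathbf{z}$ exceeding $a$; but either way the kernel subgroup and a genuine change of direction are indispensable, and your sketch as written would not go through.
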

\begin{proof}
	First, we show that every elementary region is semilinear. Since finite unions (by definition) and finite intersections (from Theorem \ref{thm:ES} (2)) of semilinear sets are semilinear, it follows that polyhedral sets are semilinear.
	
	Elementary regions of types (1) and (2) are cosets of subgroups of $\Z^k$ (see Remark \ref{rem:pol} (iii)). Therefore for any such elementary region $E$, there exists a constant $c\in\Z^k$ (a coset representative) and a finite set of (monoid) generators $\{b_1,\ldots,b_s\}\subset\Z^k$ (with $s\leq 2k$) such that $E=c+\{b_1,\ldots,b_s\}^*$.
	
	Now suppose that $E$ is an elementary region of type (3), that is, $E=\{z\in\Z^k\mid za^{\T}\geq m\}$ for some $a\in\Z^k$ and $m\in\Z$. Since translations of semilinear sets are clearly semilinear, we assume without loss of generality that $m=0$. It is standard linear algebra to find an injective linear map $\cA\colon \Q^k\to\Q^k$ such that $\cA\colon a^{\T}\mapsto e_1^{\T}$. Let $M'\in\mathrm{GL}_k(\Q)$ be a matrix representation of $\cA$ so that we have $M'a^{\T}=e_1^{\T}$. If $M'$ contains non-integer entries, choose a positive integer $\lambda$ so that $M:=\lambda M'$ has integer entries, and therefore defines an integer affine map from $\Z^k$ to itself (where $M$ acts on the left). Now we have $Ma^{\T}=\lambda e_1^{\T}$ (and $\det M\neq0$). Consider the half-space \[E_0=\{z\in\Z^k\mid ze_1^{\T}\geq0\}=\{e_1, \pm e_2, \pm e_3,\ldots,\pm e_k\}^*,\] which is also a linear set. If $z_0\in E_0$ then $(z_0M)a^{\T} = \lambda z_0 e_1^{\T}\geq 0$, and so $z_0M\in E$, i.e. $E_0M\subset E$. (NB: we are using $M$ to define two different integer affine maps, via left action on column vectors, and right action on row vectors).
	
	Now \[E=\bigcup_{v\in P_M}(E_0+v)M\] where $P_M$ is the preimage under $M$ of the elements of the fundamental parallelepiped of the lattice defined by $M$. Since $E_0$ is semilinear, and \(P_M\) is finite (because $M$ is injective), Lemma \ref{lem:semilinearaffine} implies that $E$ is also semilinear.
	
	To show the converse, it is enough to prove that any linear set is polyhedral (since the class of polyhedral sets is closed under translation and finite union). By Lemma \ref{lem:disjoint}, and the same closure properties of polyhedral sets again, it is then enough to prove that \(D^*\) is polyhedral, whenever the elements of \(D\) are linearly independent.
	
	Linear independence implies that \(|D|\leq 2k\), and therefore we can find an integral affine map $\Z^k\to\Z^k$ which takes each of $|D|$ standard basis vectors to a unique element of $D$. The preimage of $D^*$ under this affine map will be a union of orthants, which is clearly polyhedral, and since the image of a polyhedral set is polyhedral, so is $D^*$.
\end{proof}

We next consider the growth of semilinear sets, and to do so we define monotone sets.

\begin{definition}\label{def:orthant}
	For each $I\subseteq\{1,\ldots,k\}$, including $I=\emptyset$, let \begin{equation*}
		Q_I=\{z\in\Z^k\mid e_i\cdot z\geq0\text{ if }i\in I,~e_i\cdot z<0\text{ if }i\notin I\},
	\end{equation*} i.e.~the orthant including the positive $e_i$ axis for each $i\in I$ and the negative $e_j$ axis for each $j\notin Q_I$. A subset of $\Z^k$ is said to be \emph{monotone} if it is contained entirely in a single orthant $Q_I$.
\end{definition}

\begin{lemma}\label{lem:union}
Each set $P\subseteq \Z^k$ can be written as the disjoint union $P=\bigcup_{j=1}^{2^k} P_j$ of monotone subsets $P_j$, where each $P_j$ is in a different orthant $Q_j$. Moreover, if $P$ is polyhedral then each $P_j$ is polyhedral. Equivalently, if $P$ is semilinear then each $P_j$ is semilinear, and if $P$ is rational then each \(P_j\) is rational.
\end{lemma}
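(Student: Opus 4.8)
The plan is to decompose $\Z^k$ according to the $2^k$ orthants $Q_I$ from Definition \ref{def:orthant} and intersect $P$ with each. First I would set $P_I = P \cap Q_I$ for each $I \subseteq \{1,\ldots,k\}$; since the $Q_I$ partition $\Z^k$ into disjoint monotone pieces, the $P_I$ are disjoint and their union is $P$, and each $P_I$ is monotone by construction. Re-indexing the $2^k$ sets as $P_1, \ldots, P_{2^k}$ gives the desired disjoint union, so the only real content is showing that the defining property (polyhedral / semilinear / rational) is inherited by each intersection.

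For the polyhedral case, the key observation is that each orthant $Q_I$ is itself polyhedral: the conditions $e_i \cdot z \geq 0$ and $e_i \cdot z < 0$ are exactly elementary regions of type (3) (after noting $e_i \cdot z < 0 \iff -e_i \cdot z > 0$ using $\mathbf u \in \N$ appropriately, or equivalently $e_i \cdot z \geq 1$ rewritten), so $Q_I$ is a finite intersection of elementary regions, hence a basic polyhedral set. Then $P_I = P \cap Q_I$ is polyhedral by the closure of polyhedral sets under finite intersection, which is Proposition \ref{prop:polyclosed}. The equivalence with the semilinear statement is then immediate from Proposition \ref{Prop:PeqS}, which identifies polyhedral and semilinear subsets of $\Z^k$.

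For the rational case I would argue directly via Theorem \ref{thm:ES}(2): each orthant $Q_I$ is semilinear (it is polyhedral, hence semilinear by Proposition \ref{Prop:PeqS}, and semilinear equals rational in the commutative monoid $\Z^k$ by Theorem \ref{thm:ES}(1)), and the intersection of two rational subsets of a commutative monoid is rational by Theorem \ref{thm:ES}(2). Thus $P_I = P \cap Q_I$ is rational whenever $P$ is. Since rational and semilinear coincide for subsets of $\Z^k$ by Theorem \ref{thm:ES}(1), all three statements are genuinely equivalent reformulations, and proving any one of them (most cleanly the polyhedral one) suffices.

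The step requiring the most care is the bookkeeping around the strict-versus-nonstrict inequalities defining the orthants, to confirm $Q_I$ really is a basic polyhedral set in the precise sense of Definition \ref{def:PolSets}; one must rewrite $e_i \cdot z < 0$ as a type-(3) region, e.g.\ as $(-e_i) \cdot z > -1$, so that it fits the template $\mathbf u \cdot \mathbf z > a$. This is routine but is the one place where the literal form of the definition matters. Everything else is a direct appeal to the closure properties already established (Proposition \ref{prop:polyclosed}, Theorem \ref{thm:ES}) together with the polyhedral–semilinear equivalence of Proposition \ref{Prop:PeqS}, so I do not anticipate a substantive obstacle.
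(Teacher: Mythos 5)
Your proof is correct and follows essentially the same route as the paper: the paper likewise intersects $P$ with the $2^k$ orthants and invokes Proposition \ref{prop:polyclosed}, Proposition \ref{Prop:PeqS} and Theorem \ref{thm:ES}, the only cosmetic difference being that it uses closed orthants together with an inductive subtraction $P_j=\left(P\setminus\bigcup_{i<j}P_i\right)\cap Q_j$ to force disjointness, whereas you partition directly by the half-open orthants $Q_I$ of Definition \ref{def:orthant}. One small slip in your final paragraph: $(-e_i)\cdot z>-1$ describes $z_i\le 0$ rather than $z_i<0$; the correct type-(3) rewriting is $(-e_i)\cdot z>0$, which you in fact state correctly earlier in the proposal.
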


\begin{proof}
We can decompose $P$ as a disjoint union of $2^k$ monotone sets as follows. Let
	$Q_1=\{\mathbf{z}\in\Z^k\mid \mathbf{z}\cdot e_i\geq0,~1\leq i\leq
	r\}=\bigcap_{i=1}^k\{\mathbf{z}\in\Z^k\mid \mathbf{z}\cdot e_i\geq 0\}$
	denote the non-negative orthant of $\Z^k$.
	Let $Q_2, \ldots, Q_{2^k}$ denote the remaining orthants (in any
	order) obtained from $Q_1$ by (compositions of) reflections along
	hyperplanes perpendicular to the axes and passing through the origin.
	 Let $ P_1=P\cap Q_1$ and for each $2\leq j\leq 2^k$, inductively define
	\[P_j = \left(P\setminus\bigcup_{i<j}P_i\right) \cap Q_j.\] Each
	$P_j$ is clearly monotone, and we
	have a disjoint union $P=\bigcup_{j=1}^{2^k} P_j$. 
	
	By construction, each orthant $Q_j$ is polyhedral and therefore if $P$ is polyhedral then every $P_j$ is also polyhedral by Proposition \ref{prop:polyclosed}.
	
	The statement about semilinearity follows from Proposition \ref{Prop:PeqS}, and the statement about rationality from Theorem \ref{thm:ES}(1).
\end{proof}

Proposition \ref{prop:growth2} is well known for polyhedral sets, but we provide here a much simpler proof than that of \cite{Benson}. Our proof relies entirely on the basic structure of semilinear sets.

\begin{proposition}\label{prop:growth2}
	Any semilinear subset (equivalently, any polyhedral subset) of \(\Z^k\) has \(\N\)-rational weighted growth series with respect to the \(\ell_1\) norm.
\end{proposition}
\begin{proof}
	As in the proof of Lemma \ref{lem:union}, we express our semilinear set as a disjoint union of monotone semilinear sets \(P_i\), each of which is in weight-preserving bijection with a semilinear set \(X_i^+\); furthermore, Lemma \ref{lem:disjoint} implies that \(X_i^+\) itself can be expressed as a disjoint union of linear sets of the form \(c_i+D_i^*\), for linearly independent sets \(D_i=\{d_{i,1},\ldots,d_{i,n}\}^*\subset \Z^k\). The weighted growth series of \(c_i+\{d_{i,1},\ldots,d_{i,n}\}^*\) will have the form
	\[z^{\|c_i\|}\prod_{j=1}^{n}\frac{1}{1-z^{\|d_{i,j}\|}},\] where \(\|\cdot\|\) denotes the weighted \(\ell_1\) norm. Since the union is disjoint, the growth series of the whole subset is simply a sum of functions of this form, and is therefore \(\N\)-rational as claimed (recalling Definition \ref{def:Nrational}).
\end{proof}
Note that this form of \(\N\)-rational function is the same as that in Proposition 14.1 of \cite{Benson}.

Finally, in his thesis, Liardet defined semilinear sets within non-commutative monoids, provided these contain some abelian submonoid. That is, even though the semilinear sets lie in a non-commutative monoid, as in Theorem \ref{Thm:Liardet}, the basis of a linear set is still required to generate an abelian monoid.

\begin{definition} \cite[Def. 4.1.2]{Liardet}\label{def:Liardet}
Let $M$ be a monoid and $N$ a submonoid of $M$. A set $X \subseteq M$ is $N$-\emph{linear} if it can be written as $X=aB^*$, where $a \in M$ and $B \subseteq N$ is a finite basis for a free abelian submonoid $B^*$ of $N$, and for which the multiplication with $a$ is bijective (that is, there is a bijection between $B^*$ and $aB^*$).

The set $X$ is \emph{$N$-semilinear} if it is the disjoint union of $N$-linear sets.
\end{definition}

 Using Liardet's definition of semilinear sets within non-commutative monoids, we can give the main theorem in Liardet's thesis. In this result the semilinear sets are in a non-commutative monoid, the virtually abelian group viewed as a monoid.

\begin{theorem} \cite[Thm. 4.1.5]{Liardet}\label{Thm:Liardet}
All rational sets of a virtually abelian group with finite index subgroup $\mathbb{Z}^k$ are $\mathbb{Z}^k$-semilinear.
\end{theorem}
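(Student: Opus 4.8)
The plan is to decompose the rational set according to the cosets of $\Z^k$, reduce to a single coset, and there identify the $\Z^k$-coordinate with a rational (hence semilinear) subset of $\Z^k$. First I would fix a transversal $T=\{t_\delta\}_{\delta\in\Delta}$ for $\Z^k$ in $G$ (with $t_1=1$), so that every $g\in G$ has a unique normal form $g=\nu(g)\,t_{\pi(g)}$ with $\nu(g)\in\Z^k$ and $\pi\colon G\to\Delta$ the quotient map from \eqref{ses}. Let $R$ be rational, say the image of a regular language $L\subseteq S^*$ accepted by a finite automaton $\mathcal A$. I form the product automaton $\mathcal B$ whose states are pairs (state of $\mathcal A$, element of $\Delta$), where reading a generator $s$ sends $(q,\gamma)$ to $(q',\gamma\pi(s))$ along transitions $q\to q'$ of $\mathcal A$. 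Declaring the accept states to be those $(f,\delta)$ with $f$ accepting in $\mathcal A$ exhibits $R=\bigsqcup_{\delta\in\Delta}R_\delta$ as a disjoint union over cosets, where $R_\delta=R\cap\pi^{-1}(\delta)$ is rational and lies in the single coset $\Z^k\,t_\delta$. As $\Z^k$-semilinear sets are closed under finite disjoint union, it suffices to treat each $R_\delta$.

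Next, using normality of $\Z^k$, write $s=\beta_s t_{\sigma_s}$ and let $\phi_\gamma\in\mathrm{GL}_k(\Z)$ be conjugation by $t_\gamma$ and $\mu(\gamma,\sigma)\in\Z^k$ the cocycle defined by $t_\gamma t_\sigma=\mu(\gamma,\sigma)t_{\gamma\sigma}$. A direct computation gives, for $g=v t_\gamma$,
\[
g\,s=\bigl(v+\phi_\gamma(\beta_s)+\mu(\gamma,\sigma_s)\bigr)\,t_{\gamma\sigma_s},
\]
so that $\nu(gs)=\nu(g)+c(\gamma,s)$ where $c(\gamma,s):=\phi_\gamma(\beta_s)+\mu(\gamma,\sigma_s)\in\Z^k$ depends only on the current coset $\gamma$ and the letter $s$. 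Consequently, if I label each transition $(q,\gamma)\xrightarrow{s}(q',\gamma\pi(s))$ of $\mathcal B$ by the vector $c(\gamma,s)$, then the $\nu$-coordinate of any accepted word is exactly the sum of these labels along its path. Reading $\mathcal B$ as a finite automaton over the commutative monoid $\Z^k$ (generated by $\{\pm e_i\}$), its accepted set $X_\delta:=\nu(R_\delta)\subseteq\Z^k$ is by definition a rational subset of $\Z^k$, hence semilinear by Theorem \ref{thm:ES}(1).

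Finally I would translate back. Writing $X_\delta$ as a finite union of linear sets and refining via Lemma \ref{lem:linindep} so that each has linearly independent periods (and, after a monotone decomposition as in Lemma \ref{lem:union}, pairwise disjoint), it is enough to show that $(c+\N g_1+\cdots+\N g_r)\,t_\delta$ is $\Z^k$-linear when $g_1,\dots,g_r$ are independent. Taking $a=c\,t_\delta\in G$ and $B=\{\phi_\delta^{-1}(g_1),\dots,\phi_\delta^{-1}(g_r)\}\subseteq\Z^k$, the identity above yields $aB^*=(c+\N g_1+\cdots+\N g_r)t_\delta$; the periods $\phi_\delta^{-1}(g_j)$ are again independent, so $B^*$ is free abelian, and left multiplication by the group element $a$ is injective, giving the required bijectivity. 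Summing over the pieces and over $\delta\in\Delta$ exhibits $R$ as a disjoint union of $\Z^k$-linear sets.

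The main obstacle, and the only place real care is needed, is the twisting in the second step: because each generator's contribution to the $\Z^k$-coordinate is transported through the conjugation automorphism $\phi_\gamma$ of the current coset rather than added verbatim, one must verify that this contribution depends only on the pair $(\gamma,s)$ and combines additively, so that it can be recorded as a fixed vector label on the finite automaton $\mathcal B$. Once this cocycle bookkeeping is in place, the reduction to rationality in $\Z^k$ and the appeal to Theorem \ref{thm:ES}(1) are routine, and the remaining mild technical point is ensuring the final union can be made disjoint, which follows from the standard disjoint decomposition of semilinear sets together with the fact that distinct cosets are already disjoint.
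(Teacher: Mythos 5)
The paper does not actually prove this statement---it is quoted verbatim from Liardet's unpublished thesis \cite[Thm.\ 4.1.5]{Liardet}---so there is no in-paper argument to compare against line by line. Your proof is essentially sound and is a reasonable reconstruction. Your first two steps (the product automaton over $\Delta$ and the cocycle computation $\nu(gs)=\nu(g)+\phi_\gamma(\beta_s)+\mu(\gamma,\sigma_s)$) amount to a self-contained proof that each $R_\delta$ is of the form $X_\delta t_\delta$ with $X_\delta$ a rational, hence semilinear, subset of $\Z^k$; the paper instead gets this by citing Grunschlag (Lemma \ref{lem:Grunschlag}, used in Proposition \ref{prop:rationalCWP}) together with Proposition \ref{Prop:PeqS}. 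The cocycle bookkeeping you single out as the delicate point is correct: the increment really does depend only on the pair $(\gamma,s)$, and summing labels along paths of $\mathcal{B}$ does compute $\nu$. Your third step---converting a linear set $(c+\N g_1+\cdots+\N g_r)t_\delta$ into $aB^*$ with $a=ct_\delta$ and $B=\{\phi_\delta^{-1}(g_j)\}$---is the genuinely new content relative to what the paper proves, and it is correct: $\phi_\delta^{-1}$ preserves linear independence, so $B^*$ is free abelian, and left multiplication in a group is injective, so the bijectivity clause in Liardet's definition holds.

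The one misstep is the disjointness. Liardet's definition of $\Z^k$-semilinear requires a \emph{disjoint} union of $\Z^k$-linear sets, and the monotone decomposition of Lemma \ref{lem:union} does not deliver this: it separates pieces lying in different orthants, but within a single orthant a semilinear set is still only expressed as a possibly overlapping union of linear sets, and intersecting a linear set with an orthant need not return a linear set with independent periods. What you actually need is the stronger structural theorem of Eilenberg--Sch\"utzenberger from \cite{RationalSets} (of which Lemma \ref{lem:linindep} is a weak form): every rational subset of a finitely generated commutative monoid is \emph{unambiguously} rational, i.e.\ a finite \emph{disjoint} union of linear sets $c_i+D_i^*$ with each $D_i$ linearly independent. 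Substituting that citation for your appeal to Lemma \ref{lem:union} closes the gap; disjointness across distinct $\delta$ is automatic, as you say, since the cosets are disjoint.
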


\subsection{Coset-wise polyhedral sets}

The notion of a \emph{coset-wise polyhedral} (CWP) set was introduced by the second author and Levine in \cite{EvettsLevine}  as the natural set up for understanding solutions sets of equations in virtually abelian groups. 
Let $G$ be a finitely generated virtually abelian group with free abelian normal subgroup $\Z^k$ of finite index, as in the short exact sequence (\ref{ses}).

\begin{definition}\label{def:CWP}
	Let $T$ be a choice of transversal for the finite index normal subgroup $\Z^k$. A subset $V\subseteq G$ will be called \emph{coset-wise polyhedral} (CWP) if, for each $t\in T$, the set
	\[V_t =\left\{gt^{-1}\,\middle\vert\, g\in V\cap\Z^kt\right\}\subseteq\Z^{k}\]
	is polyhedral.
\end{definition}

	Note that being coset-wise polyhedral is independent of the choice of $T$. Indeed, suppose that we chose a different transversal $T'$ so that for each $t_j\in T$ we have $t'_j\in T'$ with $\Z^kt_j=\Z^kt'_j$. Then there exists $y_j\in\Z^k$ with $t_j=y_jt'_j$ for each $j$, and so $g{t'_j}^{-1}=g_jt_j^{-1}y_i$ for any $g\in\Z^kt_j=\Z^k t'_j$. So changing the transversal changes the set $V_t$ by adding a constant element $y_j$, and so it remains polyhedral by Proposition \ref{prop:polyaffine}.

It turns out that coset-wise polyhedral and rational sets coincide in virtually abelian groups, as Proposition \ref{prop:rationalCWP} shows; this relies on the following result of Grunschlag, which relates the rational subsets of a finite index subgroup of a group \(G\), to the rational subsets of \(G\) itself.

	\begin{lemma}\cite[Corollary 2.3.8]{Grunschlag_thesis}
		\label{lem:Grunschlag}
		Let \(G\) be a group with finite generating set \(S\), and \(H\) be a
		finite index subgroup of \(G\). Let \(\Sigma\) be a finite generating set
		for \(H\), and \(T\) be a right transversal for \(H\) in \(G\). For each
		rational subset \(U \subseteq G\), such that \(U \subseteq H t\) for some
		\(t \in T\), there exists a (computable) rational
		subset \(V \subseteq H\) (with respect to \(\Sigma\)), such that \(U = Vt\).
	\end{lemma}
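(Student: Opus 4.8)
My plan is to produce $V$ as the right translate $Ut^{-1}$ by applying a Reidemeister--Schreier rewriting to an automaton defining $U$, exploiting that $H$ has finite index so that $G$ acts on the \emph{finite} set of right cosets $\{Ht_i : t_i\in T\}$. First I would fix a finite state automaton $\mathcal{A}$ over $S$ recognising a regular language $L\subseteq S^*$ with image $U$; since $U\subseteq Ht$, every word of $L$ represents an element of $Ht$. Taking $1\in T$ without loss of generality, for each representative $t_i\in T$ and generator $s\in S$ the product $t_is$ lies in a unique coset $Ht_j$, so the Schreier generator $h_{i,s}:=t_ist_j^{-1}$ lies in $H$, and because $\Sigma$ generates $H$ I can (computably) write each such $h_{i,s}$ as a word $w_{i,s}\in\Sigma^*$.

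Next I would build a finite-state transducer on state set $T$, starting at the state $1$, which on reading $s$ from state $t_i$ outputs the block $w_{i,s}$ and moves to the state $t_j$ determined by $Ht_is=Ht_j$. Writing $t_{(k)}$ for the representative of the coset of the length-$k$ prefix of an input $w=s_1\cdots s_n$, the telescoping identity $\overline{w}=\bigl(\prod_{k=1}^n t_{(k-1)}s_kt_{(k)}^{-1}\bigr)\,t_{(n)}$, with each bracket equal to some $h_{i,s}\in H$, shows that when $\overline{w}\in Ht$ the machine finishes in state $t$ and emits a $\Sigma$-word representing $\overline{w}\,t^{-1}$. I would then form the product of this transducer with $\mathcal{A}$, declare accepting exactly those states in which $\mathcal{A}$ accepts and the coset component equals $t$, and replace each edge label $s$ by the block $w_{i,s}$. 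Since regular languages are closed under finite-state (gsm) transductions, the resulting $L'\subseteq\Sigma^*$ is regular, and by construction its image in $H$ is $\{\overline{w}\,t^{-1} : \overline{w}\in U\}=Ut^{-1}$. Setting $V=Ut^{-1}$ yields a rational subset of $H$ with respect to $\Sigma$ satisfying $U=Vt$, and every step above is effective, giving computability.

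The crux is to verify that this transducer faithfully realises the map $g\mapsto gt^{-1}$ on $Ht$: the coset-tracking must be a deterministic, finite-state function of the input, which is precisely where finiteness of $[G:H]$ enters, and the telescoping identity must be checked so that the emitted word genuinely represents $gt^{-1}$ rather than some stray coset correction. I expect the only delicate bookkeeping to be keeping the starting coset consistent (reading prefixes left to right tracks $H\cdot(\text{prefix})$, ending at $Ht$) and confirming that non-determinism in $\mathcal{A}$ causes no difficulty, since the coset component of the product evolves deterministically irrespective of the choices made in $\mathcal{A}$.
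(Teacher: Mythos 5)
Your proof is correct. Note, however, that the paper does not prove this lemma at all: it is imported verbatim from Grunschlag's thesis (Corollary 2.3.8) and used as a black box in Proposition \ref{prop:rationalCWP}, so there is no internal argument to compare against. What you give --- running a deterministic coset-tracking component with state set $T$ in parallel with an automaton for $U$, rewriting each transition via the Schreier generator $h_{i,s}=t_ist_j^{-1}$ expressed as a word over $\Sigma$, and using the telescoping identity to check that the emitted word represents $\overline{w}\,t^{-1}$ --- is the standard Reidemeister--Schreier transduction argument, and is essentially the proof in the cited source; your observation that the coset component evolves deterministically, so that non-determinism in $\mathcal{A}$ is harmless, is exactly the right point to isolate. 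Two small caveats. First, for the words $w_{i,s}\in\Sigma^*$ to exist you need $\Sigma$ to generate $H$ as a monoid (equivalently, to be inverse-closed), and likewise for $S$; this is the convention implicit in the paper's definition of rational subsets, but it is worth stating. Second, the parenthetical ``computable'' needs more than the existence of the $w_{i,s}$: one must effectively construct the coset table of $H$ in $G$ and effectively express each Schreier generator over $\Sigma$, which holds under standard hypotheses (e.g.\ $G$ finitely presented, so coset enumeration terminates) but is not automatic for an arbitrary finitely generated $G$. Neither point affects the way the lemma is used in this paper, where only existence is needed and $G$ is virtually abelian.
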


\begin{proposition}\label{prop:rationalCWP}
	Let $U$ be a subset of a virtually abelian group $G$. Then $U$ is a rational set if and only if it is coset-wise polyhedral.
\end{proposition}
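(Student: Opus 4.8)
The plan is to prove both directions by reducing the global statement about $G$ to the local, coset-by-coset statement in $\Z^k$, where we already know (via Proposition \ref{Prop:PeqS} and Theorem \ref{thm:ES}(1)) that polyhedral, semilinear, and rational sets all coincide. Throughout I fix a transversal $T=\{t_1,\ldots,t_m\}$ for $\Z^k$ in $G$, so that $G=\bigsqcup_{t\in T}\Z^k t$, and I write $U_t=\{gt^{-1}\mid g\in U\cap\Z^k t\}\subseteq\Z^k$ as in Definition \ref{def:CWP}. The key observation is that $U=\bigsqcup_{t\in T} U_t\, t$, so $U$ decomposes into finitely many pieces, one per coset, and rationality is preserved under finite unions; thus it suffices to understand each piece $U\cap\Z^k t = U_t\, t$ separately.

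\textbf{CWP $\Longrightarrow$ rational.} Suppose each $U_t\subseteq\Z^k$ is polyhedral. By Proposition \ref{Prop:PeqS} it is semilinear, hence by Theorem \ref{thm:ES}(1) it is a rational subset of the commutative monoid $\Z^k$. A rational subset of $\Z^k$ is the image of a regular language over $\{\pm e_1,\ldots,\pm e_k\}$; translating this alphabet into words over the generating set of $G$ (using $\Sigma$ generating $\Z^k$), $U_t$ is a rational subset of $G$ supported in $\Z^k$. Right-multiplication by the single element $t$ is an operation that preserves rationality (concatenating a fixed word $t$ to every word of a regular language), so $U_t\, t$ is rational in $G$. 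Finally $U=\bigcup_{t\in T} U_t\, t$ is a finite union of rational sets, hence rational.

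\textbf{Rational $\Longrightarrow$ CWP.} Here the tool is Lemma \ref{lem:Grunschlag}. Given a rational $U\subseteq G$, I first intersect with each coset: each $U\cap\Z^k t$ should be rational. The cleanest route is to observe that for a fixed $t$, the piece $U\cap\Z^k t$ is obtained from $U$ by an intersection with the rational (indeed recognisable, since it is a union of cosets of a finite-index subgroup) set $\Z^k t$; rational subsets of $G$ are closed under intersection with recognisable sets. Then Lemma \ref{lem:Grunschlag}, applied with $H=\Z^k$ and generating set $\Sigma$, yields a rational subset $V\subseteq\Z^k$ (with respect to $\Sigma$) such that $U\cap\Z^k t = V t$; by definition this $V$ is exactly $U_t$. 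A rational subset of $\Z^k$ is semilinear by Theorem \ref{thm:ES}(1), and semilinear equals polyhedral by Proposition \ref{Prop:PeqS}, so $U_t$ is polyhedral. As this holds for every $t\in T$, the set $U$ is coset-wise polyhedral.

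\textbf{Main obstacle.} The delicate point is the reverse direction, specifically justifying that $U\cap\Z^k t$ is a \emph{rational} subset of $G$ lying inside a single coset, so that Lemma \ref{lem:Grunschlag} applies as stated. This requires knowing that the class of rational subsets of $G$ is closed under intersection with the recognisable set $\Z^k t$ (a standard fact, since $\Z^k$ has finite index and cosets are recognisable), and then that a rational subset of $\Z^k$ in the sense of $G$'s generators is still rational as a subset of the monoid $\Z^k$ with respect to $\Sigma$—this last compatibility between ambient and intrinsic rationality is exactly what Theorem \ref{thm:ES}(3) is designed to handle. I would flag these closure and transfer properties as the crux and invoke them explicitly, as the forward direction is essentially bookkeeping once the coset decomposition is set up.
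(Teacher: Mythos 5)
Your proof is correct and follows essentially the same route as the paper: Grunschlag's lemma plus the polyhedral--semilinear--rational equivalences in $\Z^k$ for one direction, and lifting regular languages for the $U_t$ and concatenating the transversal elements for the other. In fact you are slightly more careful than the paper, which applies Lemma \ref{lem:Grunschlag} without explicitly noting that one must first intersect $U$ with each coset $\Z^kt$ (a recognisable set) to satisfy the lemma's hypothesis $U\subseteq Ht$ --- the closure and transfer properties you flag are exactly the right points to make explicit.
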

\begin{proof}
	As above, we work with the generating set $\{\pm e_1,\ldots,\pm e_k\}\cup T$ for $G$, where $\{\pm e_1,\ldots,\pm e_k\}$ generates $\Z^k$, and \(T=\{t_1,\ldots,t_d\}\) is a transversal. Firstly, suppose $U$ is rational. Then Lemma \ref{lem:Grunschlag} gives a finite set of rational subsets of $\Z^k$, say $V_1,\ldots,V_d$, such that $U=\bigcup_i V_it_i$. Since rational subsets of $\Z^k$ are polyhedral by Theorem \ref{thm:ES}(1) and Proposition \ref{Prop:PeqS}, $U$ is coset-wise polyhedral.
	
	Conversely, suppose that $U\subset G$ is coset-wise polyhedral. For each $t_i\in T$ the set $U_i:=\{ut_i^{-1}\mid u\in U\cap\Z^kt_i\}\subset\Z^k$ is polyhedral, and hence rational (by Proposition \ref{Prop:PeqS} and Theorem \ref{thm:ES}(1). So there exists a regular language $L_i\subset\{\pm e_1,\ldots,\pm e_k\}^*$ which surjects to $U_i$. Then regular language $\bigcup L_it_i\subset\left(\{\pm e_1,\ldots,\pm e_k\}\cup T\right)^*$ surjects to $U$, which is thus rational.
\end{proof}

When dealing with sets of tuples of elements of a virtually abelian group $G$, we can think of them as subsets of the direct product of finitely many copies of $G$. The following Lemma shows that any results about subsets of $G$ will also hold for sets of tuples.
\begin{lemma} \label{lem:directprod}
	If $G$ is virtually abelian, with index-$d$ normal subgroup $\Z^k$ and transversal $T$ as usual, then the direct product $G^n$ is virtually $\Z^{kn}$, with transversal of size \(dn\), given by the set of products $T^n=\{t_{i_1}t_{i_2}\cdots t_{i_n}\in G^n\mid t_{i_k}\in T\}$, with the \(k\)th term \(t_{i_k}\) coming from the \(k\)th factor of \(T\) in \(G^n\).
\end{lemma}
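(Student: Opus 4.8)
The plan is to verify that the usual behaviour of normal subgroups, quotients, and transversals under direct products yields exactly the claimed structure; all of this is routine once set up correctly. Writing elements of $G^n$ as $n$-tuples, the subgroup $(\Z^k)^n$ is normal in $G^n$ because conjugation acts coordinate-wise and each $\Z^k\trianglelefteq G$ is normal. Identifying $(\Z^k)^n\cong\Z^{kn}$, which is free abelian of finite rank, reduces the claim to checking that this subgroup has finite index and exhibiting the transversal.

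To see that the index is finite, I would use the coordinate-wise quotient map $G^n\to\Delta^n$ induced by $G\to\Delta$ from (\ref{ses}); its kernel is precisely $(\Z^k)^n$, so $G^n/\Z^{kn}\cong\Delta^n$ is finite of order $d^n$. In particular $\Z^{kn}$ has finite index in $G^n$, confirming that $G^n$ is virtually $\Z^{kn}$. For the transversal, I would observe that the coset of $(g_1,\ldots,g_n)$ modulo $(\Z^k)^n$ is determined by the tuple of cosets $(\Z^k g_1,\ldots,\Z^k g_n)$; since $T$ is a transversal for $\Z^k$ in $G$, each coordinate has a unique representative in $T$, and so the $n$-tuples drawn from $T$ form a complete, irredundant set of coset representatives. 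This is the set $T^n$ of the statement, and since these tuples are pairwise distinct it has size exactly $|T|^n=d^n$ (so the size recorded in the statement should read $d^n$).

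There is no substantive obstacle here: the argument is pure bookkeeping about direct products. The only points requiring a little care are reading $t_{i_1}t_{i_2}\cdots t_{i_n}$ as an $n$-tuple in $G^n$ rather than a product inside a single copy of $G$, and counting the resulting transversal correctly.
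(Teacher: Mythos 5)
Your argument is correct and follows essentially the same route as the paper's proof: decompose each coordinate as $g_j = x_j t_{i_j}$ with $x_j \in \Z^k$ and $t_{i_j} \in T$, and use the fact that distinct factors of the direct product commute to collect the $\Z^k$-parts and the transversal parts. You are also right to flag the count: $T^n$ has exactly $|T|^n = d^n$ elements (equivalently $[G^n : \Z^{kn}] = d^n$), whereas the paper's statement and proof assert ``at most $dn$'' (with a parenthetical about commuting pairs that does not apply, since elements drawn from distinct factors always commute yet distinct tuples give distinct products); this appears to be a slip for $d^n$, and is harmless for the lemma's role in Corollary \ref{Cor:DefRat}, which only needs the index to be finite.
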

\begin{proof}
	Any element of $G^n$ can be put into the following form:
	\[x_1t_{i_1}\cdot x_2t_{i_2}\cdots x_nt_{i_n} = x_1x_2\cdots x_n \cdot t_{i_1}t_{i_2}\cdots t_{i_n}\in\Z^{kn}t_{i_1}t_{i_2}\cdots t_{i_n}.\]
	Since $T^n$ has exactly $dn$ elements, we have $[G^n\colon\Z^{kn}]= dn$.
\end{proof}

\section{Definable sets in virtually abelian groups}

We give here an overview of definable sets in virtually abelian groups, and refer the reader to the books \cite{Prest} and \cite{TentZiegler} for an in-depth account.

Let $n$ be an integer with $n \geq 1$. A subset $S$ of a group $G$ is $n$-\emph{definable} over $G$ if there is a first-order formula $\Phi(x, y_1, \dots, y_n)$ and an $n$-tuple of parameters $\overline{m}=(m_1, \dots, m_n)$, $m_i \in G$ with $1\leq i \leq n$, such that 
$\Phi(g,\overline{m})$ is true if and only if $g\in S$. We say that a set is $0$-definable if there are no parameters $\overline{m}$ in the formula $\Phi$, and say that it is \emph{definable} if it is $n$-definable for some $n\geq 1$ as above.
More generally, if the formula $\Phi$ is over a tuple $\overline{x}=(x_1, \dots, x_r)$ of $r$ variables instead of a single one, that is, it has the form $\Phi(\overline{x}, y_1, \dots, y_n)$, then we have definable sets $S \subseteq G^n$ consisting of tuples of elements in $G$. If for a group $G$ a subgroup $H$ of the direct product $G^n$ is definable over $G$, then $H$ is called a \emph{definable subgroup}. 

\newpage

\begin{example}\leavevmode
\begin{itemize}
\item[(1.)] An algebraic set over a group $G$ is the solution set to a system of equations with coefficients in $G$. For a single equation $E(\overline{x}, \overline{m})=1$ with coefficients $\overline{m}$ in $G$, an algebraic set is given by an existential formula of the form $\exists \overline{x} E(\overline{x}, \overline{m})=1$. For example, an equation $x_1^2x_2^2h=1$, where $h\in G$, gives rise to an algebraic set consisting of all pairs $(g_1, g_2)\in G^2$, where $g_1, g_2$ are such that $g_1^2g_2^2h=1$ in $G$.
\item[(2.)] The conjugacy class of any element $m \in G$ is a definable set, with formula $\Phi(x,m)$ given by $\exists y, x=y^{-1}my$.
\item[(3.)] The set of elements of order $2$ in a group $G$ is definable and satisfies the formula $\Phi(x)$ (with no coefficients) given by $x^2=1 \wedge x \neq 1 $.
\end{itemize}
\end{example}

Virtually abelian groups have been studied in model theory primarily in a module setup. Let $G$ be a finitely generated virtually abelian group with free abelian normal subgroup $A=\Z^k$ of finite index, as in (\ref{ses}), and finite quotient $\Delta=G/A$. Then from the sequence 
\begin{equation*}
	1\to A\to G\to \Delta\to 1,
\end{equation*}
we can view $A$ as a right $\Z[\Delta]$-module, where the action of the group algebra $\mathbb{Z}[\Delta]$ on $A$ extends the conjugation action of $\Delta$ on $A$. That is, if $a\in A$ and $g \in G$ with $\bar{g}=gA \in \Delta$ representing the $g$-coset of $A$ in $\Delta$, then $\bar{g}\circ a=g^{-1}ag$.
 Then for a module, such as $A$, over a ring $R$ (such as $\Z[\Delta]$), every formula $\Phi(\overline{x}, y_1, \dots, y_n)$ is equivalent to a Boolean combination of \emph{positive primitive} formulas (see \cite[Theorem 3.3.5]{TentZiegler}), defined as follows.
 
 \begin{definition}\leavevmode
 \begin{itemize}
 \item[(1)] An \emph{equation} over $\overline{z}=(z_1, \dots, z_n)$ in an $R$-module $A$ is a formula $\Psi(\overline{z})$ of the form \[r_1 \circ z_1 + \dots r_n \circ z_n = 0,\] where $r_i \in R$.
  \item[(2)] A \emph{positive primitive} (pp) formula is of the form \[ \exists \ \overline{y} \ \Psi_1(\overline{x}, \overline{y})\wedge \dots \wedge \Psi_k(\overline{x}, \overline{y}),\] where all $\Psi_j(\overline{z})$ are equations over $\overline{z}=(\overline{x}, \overline{y})$ (and so $\Psi_1(\overline{x}, \overline{y})\wedge \dots \wedge \Psi_k(\overline{x}, \overline{y})$ is a system of equations).
 \end{itemize}
 \end{definition}
 
 That is, definable sets in virtually abelian groups are obtained from projecting solution sets of systems of equations onto their first coordinates ($|\overline{x}|$ - many coordinates according to our definition). It is easy to see that pp formulae define subgroups and cosets of subgroups of $G^n$ (see \cite[Lemma 3.3.7]{TentZiegler}). In fact, all definable sets in virtually abelian groups have such a characterisation, by the work of Hrushovski and Pillay:

\begin{theorem}\cite[Theorems 4.1 and 3.2]{HrushovskiPillay}\label{thm:defVirtAb}
Given a virtually abelian group $G$, every definable set $X \subseteq G^n$ is a Boolean combination of cosets of definable subgroups of $G^n$, for any $n\geq 1$.
\end{theorem}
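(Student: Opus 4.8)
The plan is to move the problem from the group $G$ into the $\Z[\Delta]$-module $A=\Z^k$ of~\eqref{ses}, where the Baur--Monk theory quoted in \cite[Theorem 3.3.5]{TentZiegler} and \cite[Lemma 3.3.7]{TentZiegler} is available, and to let the model theory of modules do the work. Since $G^n$ is again virtually abelian, with finite-index free abelian normal subgroup $A^n$ and finite quotient $G^n/A^n$ by Lemma~\ref{lem:directprod}, it is enough to prove the statement for definable subsets of a single virtually abelian group and then apply it to $G^n$. So I would reduce to showing: every definable $X\subseteq G$ is a Boolean combination of cosets of definable subgroups of $G$.

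The heart of the matter is the relationship between $G$ and the module $A$. Fixing a transversal $\{c_\delta\}_{\delta\in\Delta}$ for $A$ and writing $A$ additively, the multiplication of $G$ reads, on the decomposition $G=\bigsqcup_\delta Ac_\delta$, as $ac_\delta\cdot a'c_{\delta'}=\big(a+c_\delta a'c_\delta^{-1}+\tau(\delta,\delta')\big)c_{\delta\delta'}$, where $a'\mapsto c_\delta a'c_\delta^{-1}$ is the module automorphism of $A$ given by the $\Delta$-action and $\tau(\delta,\delta')\in A$ is the cocycle of the extension. Thus $G$ is interpretable, as a set $A\times\Delta$ carrying this twisted operation, in the module $A$ enriched by the finite data $\Delta$ and $\tau$. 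In the purely abelian case ($\Delta$ trivial) this is immediate, and Baur--Monk applies to $G=A$ directly: every definable subset is a Boolean combination of pp-definable sets, each a coset of a subgroup, which is exactly the claim.

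The main obstacle is the passage from abelian to virtually abelian, and in particular producing subgroups that are definable in $G$ itself rather than merely in $A$. One cannot in general simply restrict $X$ to cosets of $A$ and transport the module conclusion back, because a finite-index abelian normal subgroup of $G$ need not be first-order definable in $G$ (for instance, the free abelian part of $\Z^k\times\Delta$ with $\Delta$ finite nonabelian is typically not definable). The robust way around this is structural: modules are one-based (equivalently weakly normal), one-basedness is inherited by structures interpretable in a one-based theory, and hence the interpretation of the previous paragraph shows that the group $G$ is itself one-based. This is the step I expect to require the most care, as it rests on the stability-theoretic machinery underlying \cite{HrushovskiPillay}.

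Finally, I would invoke the Hrushovski--Pillay theorem on one-based (weakly normal) groups, which asserts precisely that in such a group every definable set is a Boolean combination of cosets of definable subgroups, the subgroups being definable internally to $G$. Applying this to $G$, and then to each $G^n$ via Lemma~\ref{lem:directprod}, yields the statement. Everything outside the one-basedness transfer is either the elementary computation of the twisted multiplication or a direct appeal to the quoted module theory; the genuine content is the model-theoretic input that abelian-by-finite groups are one-based.
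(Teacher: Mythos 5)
The paper gives no proof of this statement at all: it is imported verbatim from Hrushovski--Pillay, cited as their Theorems 4.1 and 3.2, so there is no internal argument to compare yours against. Your sketch is essentially an accurate reconstruction of how that cited result is actually established --- interpret $G$ in the $\Z[\Delta]$-module $A$ via the cocycle of the extension, transfer one-basedness (weak normality) from modules to interpretable structures to conclude $G$ is one-based, and invoke the Hrushovski--Pillay coset theorem for definable sets in one-based groups --- with the genuinely hard steps deferred to the same source the paper cites, which is appropriate. One small factual slip that does not affect your argument: in $\Z^k\times\Delta$ the subgroup $\Z^k\times\{1\}$ \emph{is} first-order definable (it is a finite union of cosets of the definable subgroup of $|\Delta|$-th powers), so your parenthetical example of non-definability is wrong, but your route via one-basedness correctly avoids needing $A$ to be definable in $G$ in the first place.
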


\begin{corollary}\label{Cor:DefRat}
	Given a virtually abelian group $G$, every definable set $X \subseteq G^n$ is a rational subset of $G^n$, for any $n\geq 1$.
\end{corollary}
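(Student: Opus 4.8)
The plan is to deduce Corollary \ref{Cor:DefRat} directly from Theorem \ref{thm:defVirtAb} together with the equivalence between rational and coset-wise polyhedral sets established in Proposition \ref{prop:rationalCWP}. By Theorem \ref{thm:defVirtAb}, any definable set $X\subseteq G^n$ is a Boolean combination (finite unions, intersections, and complements) of cosets of definable subgroups of $G^n$. Since $G^n$ is itself virtually abelian with finite-index free abelian normal subgroup $\Z^{kn}$ by Lemma \ref{lem:directprod}, it suffices to prove two things: first, that each coset of a definable subgroup of $G^n$ is rational; and second, that the class of rational subsets of $G^n$ is closed under the Boolean operations.

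For the first point, I would argue that a subgroup $H$ of $G^n$, being itself virtually abelian, has $H\cap\Z^{kn}$ of finite index in $H$, so $H$ meets only finitely many cosets of $\Z^{kn}$, and within each such coset $H$ restricts to (a translate of) a subgroup of $\Z^{kn}$. By Remark \ref{rem:pol}(ii) every subgroup of $\Z^{kn}$ is polyhedral, and polyhedral sets are closed under translation (Proposition \ref{prop:polyaffine}), so $H$ is coset-wise polyhedral, hence rational by Proposition \ref{prop:rationalCWP}. A coset $gH$ is then the image of the rational set $H$ under left translation by $g$; since rational sets in any group are closed under left translation (one simply prepends a word representing $g$ to the regular language), each coset is rational as well.

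The second point is the crux. Closure of rational sets under finite union is immediate (take the union of the regular languages), and closure under intersection for $G^n$ follows by transporting to $\Z^{kn}$: via Proposition \ref{prop:rationalCWP} rationality is equivalent to coset-wise polyhedrality, and on each coset intersection corresponds to intersection of polyhedral subsets of $\Z^{kn}$, which stays polyhedral by Proposition \ref{prop:polyclosed}. The delicate operation is complementation, since rational sets in a general group need not be closed under complement. Here, however, the coset-wise polyhedral description saves us: the complement of a CWP set is computed coset by coset as the complement within $\Z^{kn}$ of a polyhedral set, and $\Z^{kn}\setminus P$ is polyhedral by Proposition \ref{prop:polyclosed}; the finitely many cosets of $\Z^{kn}$ not meeting the original set contribute full cosets, which are translates of the polyhedral set $\Z^{kn}$. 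Thus the complement is again coset-wise polyhedral, hence rational.

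I expect the main obstacle to be precisely this closure under complementation, which is false for rational sets in arbitrary (even free) groups and is the reason one cannot argue purely language-theoretically. The key insight is that for virtually abelian $G$ one passes through the coset-wise polyhedral characterisation, where the Boolean algebra structure of polyhedral sets in $\Z^{kn}$ (Proposition \ref{prop:polyclosed}) does all the work. Once the Boolean closure of CWP (equivalently rational) sets in $G^n$ is in place, the corollary follows formally: every definable set is a Boolean combination of rational cosets, and the Boolean closure keeps it rational.
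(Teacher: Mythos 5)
Your proposal is correct and follows the same overall architecture as the paper's proof: apply Theorem \ref{thm:defVirtAb} to reduce to cosets of subgroups of the virtually abelian group $G^n$ (via Lemma \ref{lem:directprod}), show these cosets are rational, and conclude by closure of rational sets under Boolean operations. The two differences are in how the intermediate steps are justified, and in both cases you supply more detail than the paper. For the rationality of a subgroup $H\leq G^n$, the paper simply observes that subgroups of finitely generated virtually abelian groups are finitely generated, and finitely generated subgroups of any group are rational; your route via the coset-wise polyhedral description (showing $H\cap\Z^{kn}t$ is a translate of the subgroup $H\cap\Z^{kn}$, which is polyhedral by Remark \ref{rem:pol}) is also valid but is more work than needed. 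On the other hand, your explicit treatment of Boolean closure is a genuine improvement in rigour: the paper asserts without justification that rational sets are closed under Boolean combinations, and as you rightly point out, closure under complementation fails for rational sets in general groups, so some argument is required. Your argument --- transporting to $\Z^{kn}$ coset by coset via Proposition \ref{prop:rationalCWP} and invoking Proposition \ref{prop:polyclosed} --- is exactly the right way to fill this in (one could alternatively cite the classical fact that rational subsets of virtually abelian, or more generally virtually polycyclic, groups form a Boolean algebra, but that is not stated in the paper). So your proof is sound and, if anything, more complete than the published one.
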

\begin{proof}
	We work within the direct product $G^n$, which by Lemma \ref{lem:directprod} is also virtually abelian. Subgroups of finitely generated virtually abelian groups are finitely generated, and are therefore rational subsets. Furthermore, any coset of a finitely generated subgroup is rational since it is just a translation of a rational set. The result follows from Theorem \ref{thm:defVirtAb} since rational sets are closed under Boolean combinations.
\end{proof}

\section{Natural normal forms}\label{sec:NF}

Write $\Sigma=\{a_1,A_1,\ldots,a_k,A_k\}$ for the set of positive and negative standard generators of $\Z^k$, that is, $A_i=a_i^{-1}$. For $g\in\Z^k$, set $\mathrm{NF}(g)$ to be the shortlex representative of $g$ with respect to $\Sigma$ with the order given above. The set of all such representatives is a regular language, denoted as follows:
\[\mathrm{NF}(\Z^k)=(a_1^*\cup A_1^*)(a_2^*\cup A_2^*)\cdots(a_k^*\cup A_k^*)\subset \Sigma^*,\]
where we write $a_i^*$ instead of $\{a_i\}^*$ for simplicity.

For a finitely generated virtually abelian group $G$, with finite index normal subgroup $\Z^k$, we use $\mathrm{NF}(\Z^k)$, together with some choice of transversal $T=\{t_1,\ldots,t_d\}$ for the cosets of $\Z^k$ to describe a normal form for $G$:
\[\mathrm{NF}(G)=(a_1^*\cup A_1^*)(a_2^*\cup A_2^*)\cdots(a_k^*\cup A_k^*)(t_1 \cup t_2 \cup \cdots \cup t_d)=\mathrm{NF}(\Z^k)T.\] Furthermore, by Lemma \ref{lem:directprod}, the direct product \(G^n\) is virtually abelian, and we define the natural normal form
\[\mathrm{NF}(G^n)=\mathrm{NF}(G)^n T^n.\] The normal form $\mathrm{NF}(G)$ will not be geodesic, in general.

We can also see \(\mathrm{NF}(\Z^k)\) as a set of tuples, and then get a \(k\)-regular ($k$-variable) language:
\[\mathrm{NF}_k(\Z^k)=\left((a_1^*\cup A_1^*),(a_2^*\cup A_2^*),\cdots, (a_k^*\cup A_k^*)\right)\subset(\Sigma^*)^k.\]
We then have \(\theta(\mathrm{NF}_k(\Z^k))=\mathrm{NF}(\Z^k)\) where \(\theta\) is the `forgetful' morphism of Proposition \ref{prop:forgetful}.

In this section we will study the formal language properties of $$\mathrm{NF}(U)=\{w\in\mathrm{NF}(G)\mid \varpi(w)\in U\}$$ (for the natural projection \(\varpi\colon \Sigma^*\to G\)) when $U$ is a rational subset of a virtually abelian group $G$. Firstly, we show that semilinear sets which are monotone (see Definition \ref{def:orthant}) have \(k\)-regular normal forms.
\begin{proposition}\label{prop:monotonenregular}
	If $X\subset\Z^k$ is monotone and semilinear, then $\mathrm{NF}_k(X)$ is $k$-regular.
\end{proposition}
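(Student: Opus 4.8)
The plan is to exploit monotonicity to collapse the statement to a clean combinatorial problem in $\N^k$, and then to build an asynchronous $k$-$\fsa$ directly from a semilinear decomposition. First I would record the consequence of monotonicity: since $X\subseteq Q_I$ for a single $I\subseteq\{1,\ldots,k\}$ (Definition \ref{def:orthant}), the sign of each coordinate of an element $\mathbf{p}=(p_1,\ldots,p_k)\in X$ is fixed. Writing $s_i=a_i$ when $i\in I$ and $s_i=A_i$ when $i\notin I$, the normal form tuple of $\mathbf{p}$ is precisely $(s_1^{|p_1|},\ldots,s_k^{|p_k|})$. Thus the coordinate-wise sign-flip map $\phi\colon\Z^k\to\Z^k$ negating the coordinates outside $I$ is a length-preserving bijection of $X$ onto a set $X^+\subseteq\N^k$, exactly as in the proof of Proposition \ref{prop:growth}; since $\phi$ is an integer affine map, $X^+$ is semilinear by Lemma \ref{lem:semilinearaffine}, and it is semilinear as a subset of the finitely generated submonoid $\N^k$ by Theorem \ref{thm:ES}. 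It therefore suffices to show that for any semilinear $Y\subseteq\N^k$ the set of tuples $\{(s_1^{q_1},\ldots,s_k^{q_k})\mid(q_1,\ldots,q_k)\in Y\}$ is $k$-regular, as this set is exactly $\mathrm{NF}_k(X)$ when $Y=X^+$.

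Since the class of $k$-regular languages is closed under union (Lemma \ref{lem:regularunion}) and $Y$ is a finite union of linear sets, I may assume $Y=\mathbf{a}+\N\mathbf{b}_1+\cdots+\N\mathbf{b}_m$ is a single linear set, with $\mathbf{a},\mathbf{b}_\ell\in\N^k$ (the non-negativity being what Theorem \ref{thm:ES} secured). I would then construct an asynchronous $k$-$\fsa$ as follows. Start from $s_0$ with a fixed path outputting the constant $\mathbf{a}$, realised as a block of $a_i$ (resp.\ $A_i$) edges in coordinate $i$, taken in turn for $i=1,\ldots,k$; each such edge carries a single non-$\varepsilon$ letter, as required by Definition \ref{def:fsa}. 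This path ends at a hub state $h$, which is the unique accept state. For each generator $\mathbf{b}_\ell$ I attach to $h$ a directed cycle returning to $h$, consisting of $b_{\ell,1}$ edges labelled $s_1$ in coordinate $1$, then $b_{\ell,2}$ edges labelled $s_2$ in coordinate $2$, and so on through coordinate $k$, so that one traversal appends $s_i^{b_{\ell,i}}$ to coordinate $i$ for every $i$ at once.

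The key point making the construction correct is that reading labels along a path and concatenating within each coordinate converts coordinate-wise concatenation into coordinate-wise addition of exponents: any accepting path traverses the $\ell$-th cycle some number $n_\ell\geq0$ of times, in any interleaving, and the resulting tuple has $i$-th coordinate $s_i^{a_i+\sum_\ell n_\ell b_{\ell,i}}$, which ranges over exactly $\{(s_1^{q_1},\ldots,s_k^{q_k})\mid\mathbf{q}\in Y\}$ as the $n_\ell$ range over $\N$. I expect the main obstacle to be precisely this reconciliation of the semilinear generators, each of which contributes to all $k$ coordinates simultaneously, with the $n$-$\fsa$ requirement that every edge carry at most one non-$\varepsilon$ entry; serialising each generator into a cycle of single-letter edges, and observing that the order of traversal is irrelevant because exponents simply add, resolves it. A final application of Lemma \ref{lem:regularunion} to recombine the finitely many linear pieces then shows that $\mathrm{NF}_k(X)$ is $k$-regular.
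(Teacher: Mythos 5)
Your proof is correct and follows essentially the same route as the paper's: decompose the semilinear set into linear pieces (via Lemma \ref{lem:regularunion}) and realise each linear piece $\mathbf{a}+\N\mathbf{b}_1+\cdots+\N\mathbf{b}_m$ by a $k$-$\fsa$ consisting of a path spelling out the constant followed by loops at a hub state spelling out the generators. The only (harmless) difference is that you apply monotonicity up front, via the sign-flip bijection onto $\N^k$ together with Theorem \ref{thm:ES}(3), so that the letter used in each coordinate is fixed in advance, whereas the paper works directly in $\Z^k$ and invokes monotonicity only to rule out forbidden subwords $a_iA_i$ or $A_ia_i$ when the paths are concatenated.
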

\begin{proof}
	By Lemma \ref{lem:regularunion}, we may assume without loss of generality that \(X\) is linear, say of the form \(c+\{d_1,d_2,\ldots,d_r\}^*\). So each element of \(X\) has the form \(c+ m_1d_1+m_2d_2+\cdots +m_rd_r\) for some \(m_j\in\N\). For any \(b\in\Z^k\), let \(p_b\) denote a path of \(|b|_{\ell_1}\) consecutive edges (with \(|b|_{\ell_1}-1\) states), with each edge labelled by a \(k\)-tuple in \(\left( \{a_1, A_1, \varepsilon \},\{a_2, A_2, \varepsilon\},\ldots,\{a_k, A_k, \varepsilon\}\right)\) with exactly one non-epsilon entry, so that the \(k\)-tuple obtained by reading along the path and deleting \(\varepsilon\)s is equal to \(b\).
	\begin{figure}[!h]
	\includegraphics[width=0.6\textwidth]{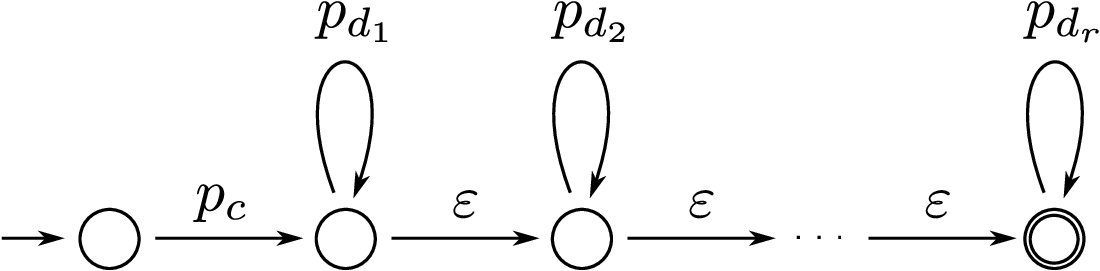}
	\caption{The \(k\)-fsa for Proposition \ref{prop:monotonenregular}.}
	\label{fig:monotoneFSA}
\end{figure}

	By monotonicity, each \(d_j\) lies in the same orthant, and therefore a concatenation of paths \(p_{d_j}p_{d_{j+1}}\) can never result in a subword \(a_iA_i\) or \(A_ia_i\), and therefore produces an element of \(\mathrm{NF}(G)\). The automaton given in Figure \ref{fig:monotoneFSA} then clearly produces the language \(\mathrm{NF}_k(X)\).
\end{proof}

We are now able to put together Propositions \ref{prop:forgetful} and  \ref{prop:monotonenregular} to set up a bridge from virtually abelian groups to semilinear sets in free abelian groups, and from $k$-regular languages to EDT0L. We thus show that any rational set in a finitely generated virtually abelian group has an EDT0L representation in terms of the most natural normal forms for the group.

\begin{theorem}\label{thm:NF}
	Let $U$ be a rational subset of a virtually abelian group. Then $\mathrm{NF}(U)$ is an EDT0L language.
\end{theorem}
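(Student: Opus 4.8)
The plan is to reduce $\mathrm{NF}(U)$ to a finite union of pieces, one for each coset $\Z^k t_i$, and then to handle each piece with the machinery already established for monotone semilinear sets. First I would invoke Proposition \ref{prop:rationalCWP} to pass from rationality to the coset-wise polyhedral description: writing $T=\{t_1,\ldots,t_d\}$ for the transversal, each set $V_{t_i}=\{g t_i^{-1}\mid g\in U\cap\Z^k t_i\}\subseteq\Z^k$ is polyhedral, hence semilinear by Proposition \ref{Prop:PeqS}. Since every word of $\mathrm{NF}(G)$ terminates in exactly one transversal letter, and the element $g=x t_i$ with $x\in\Z^k$ has natural normal form $\mathrm{NF}(x)\,t_i$, the normal forms of $U$ split as
\[\mathrm{NF}(U)=\bigcup_{i=1}^d\mathrm{NF}(V_{t_i})\,t_i,\]
where $\mathrm{NF}(V_{t_i})\subseteq\Sigma^*$ is the set of shortlex representatives of the elements of $V_{t_i}$. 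By Lemma \ref{lem:EDT0Lclosure}, which gives closure of EDT0L languages under finite union and under concatenation (here concatenation with the singleton, hence EDT0L, languages $\{t_i\}$), it then suffices to show that each $\mathrm{NF}(V_{t_i})$ is EDT0L.

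Next I would decompose each $V_{t_i}$ into monotone pieces. By Lemma \ref{lem:union}, we may write $V_{t_i}=\bigcup_{j=1}^{2^k}P_j$ as a disjoint union of monotone semilinear sets, and since the natural normal form of each element of $\Z^k$ is unique this gives $\mathrm{NF}(V_{t_i})=\bigcup_{j=1}^{2^k}\mathrm{NF}(P_j)$. For each monotone semilinear $P_j$, Proposition \ref{prop:monotonenregular} shows that the tuple language $\mathrm{NF}_k(P_j)\subseteq(\Sigma^*)^k$ is $k$-regular. Applying the forgetful morphism $\theta$ of Proposition \ref{prop:forgetful}(i), which concatenates the $k$ coordinate blocks $(a_1\mid A_1)^{m_1},\ldots,(a_k\mid A_k)^{m_k}$ and thereby recovers exactly the normal form word in $\mathrm{NF}(\Z^k)$, yields that $\mathrm{NF}(P_j)=\theta(\mathrm{NF}_k(P_j))$ is EDT0L. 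A finite union over $j$ (Lemma \ref{lem:EDT0Lclosure}) then shows $\mathrm{NF}(V_{t_i})$ is EDT0L, and the finite union over $i$ together with concatenation by $t_i$ completes the argument.

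The proof is essentially an assembly of the earlier results, so I expect no single deep obstacle. The main point requiring care is the bookkeeping linking the descriptions of $V_{t_i}$ (polyhedral, semilinear, monotone) to the tuple language $\mathrm{NF}_k$, and in particular the verification that $\theta(\mathrm{NF}_k(P_j))$ genuinely equals $\mathrm{NF}(P_j)$ rather than some reordering of the generators. This is immediate from the identity $\theta(\mathrm{NF}_k(\Z^k))=\mathrm{NF}(\Z^k)$ recorded just after the definition of $\mathrm{NF}_k(\Z^k)$, restricted to the subset $P_j$. A secondary check is that the coset splitting above is genuinely disjoint and exhaustive, which follows from the uniqueness of the natural normal form and the fact that $T$ is a transversal for $\Z^k$ in $G$.
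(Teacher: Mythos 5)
Your proposal is correct and follows essentially the same route as the paper's proof: decompose $U$ coset-wise via Proposition \ref{prop:rationalCWP}, split each polyhedral piece into monotone parts via Lemma \ref{lem:union}, apply Proposition \ref{prop:monotonenregular} to get $k$-regularity, and pass to EDT0L via Proposition \ref{prop:forgetful} and the closure properties of Lemma \ref{lem:EDT0Lclosure}. The only cosmetic difference is that the paper first assembles the monotone pieces into a single $k$-regular language before applying the forgetful map, whereas you apply it piecewise and union afterwards; both orderings are valid.
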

\begin{proof}
	By Proposition \ref{prop:rationalCWP}, we have a disjoint union \[U=\bigcup_{i=1}^d U_it_i\] where each $U_i=\{ut_i^{-1}\mid u\in U\cap\Z^k\}\subset\Z^k$ is a polyhedral subset. We now claim that polyhedral sets have $k$-regular normal forms. More precisely, \(\mathrm{NF}_k(P)\) is \(k\)-regular whenever \(P\subset\Z^k\) is a polyhedral set. The result then follows from the fact that \(k\)-regular languages are EDT0L (Proposition \ref{prop:forgetful}), and that the class of EDT0L languages is closed under concatenation with a single letter \(t_i\), and under finite unions (Lemma \ref{lem:EDT0Lclosure}).

	To prove the claim, consider a polyhedral set $P\subset \Z^k$. We can decompose $P$ as a disjoint union of $2^k$ monotone polyhedral sets as in Lemma \ref{lem:union}. Then by Proposition \ref{prop:monotonenregular}, $\mathrm{NF}_k(P)$ is a finite union of $k$-regular languages, and is therefore itself $k$-regular.
\end{proof}
Finally, since definable sets in virtually abelian groups are rational sets in direct products of virtually abelian groups, and these products are again virtually abelian (Lemma \ref{lem:directprod}), we are able to use the rationality of definable sets and the representation of rational sets in terms of EDT0L languages to get the following.
\begin{corollary}
	Let $G$ be a finitely generated virtually abelian group, and $U\subset G^n$ a definable set. Then $\mathrm{NF}(U)\subset\mathrm{NF}(G^n)$ is EDT0L.
\end{corollary}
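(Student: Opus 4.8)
The plan is to deduce this corollary directly by combining three of the results already established, so the proof should be short. First I would observe that, by Lemma \ref{lem:directprod}, the direct product $G^n$ is itself a finitely generated virtually abelian group: it has the free abelian normal subgroup $\Z^{kn}$ of finite index, with transversal $T^n=\{t_{i_1}t_{i_2}\cdots t_{i_n}\mid t_{i_j}\in T\}$. This is the crucial structural fact, because it means the entire apparatus of Section \ref{sec:NF} applies verbatim to $G^n$: taking $\Sigma$ to be the standard positive and negative generators of $\Z^{kn}$ and $T^n$ as the transversal, the natural normal form $\mathrm{NF}(G^n)$ and the associated language $\mathrm{NF}(U)=\{w\in\mathrm{NF}(G^n)\mid \overline{w}\in U\}$ are well defined in exactly the sense used in Theorem \ref{thm:NF}.

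Next I would invoke Corollary \ref{Cor:DefRat}, which states that every definable set $X\subseteq G^n$ is a rational subset of $G^n$ (for any $n\geq 1$). Applied to our definable set $U\subseteq G^n$, this yields that $U$ is a rational subset of the virtually abelian group $G^n$.

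Finally, I would apply Theorem \ref{thm:NF} to the virtually abelian group $G^n$ and the rational subset $U$. Since $U$ is rational in $G^n$, the theorem gives immediately that $\mathrm{NF}(U)$ is an EDT0L language, as required. The whole argument is a matter of chaining these three statements together, and no new estimates or constructions are needed.

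The only point demanding any care — and the nearest thing to an obstacle — is purely bookkeeping: one must check that the normal form conventions match. Theorem \ref{thm:NF} is stated for an arbitrary finitely generated virtually abelian group with \emph{any} choice of transversal, and Lemma \ref{lem:directprod} supplies precisely the transversal $T^n$ and the finite index subgroup $\Z^{kn}$ that instantiate those conventions for $G^n$. Hence the $\mathrm{NF}(G^n)$ appearing in the corollary coincides with the one to which Theorem \ref{thm:NF} refers, and the conclusion follows without further work.
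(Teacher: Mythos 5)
Your proposal is correct and is precisely the chain of results the paper intends (the corollary is stated without proof immediately after Theorem \ref{thm:NF} for exactly this reason): Lemma \ref{lem:directprod} makes $G^n$ a finitely generated virtually abelian group, Corollary \ref{Cor:DefRat} makes the definable set $U$ rational in $G^n$, and Theorem \ref{thm:NF} then yields that $\mathrm{NF}(U)$ is EDT0L. Your bookkeeping remark about matching the transversal and normal-form conventions for $G^n$ is a sensible, if minor, addition.
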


Since \(\mathrm{NF}(G)\) is \emph{not} a geodesic normal form, it cannot be used to compute the growth series of $U$ with respect to the word metric on the group. For that we will need a different normal form, which we study in the next section.

\section{Geodesic normal forms}\label{sec:gnf}

In \cite{NeumannShapiro97}, Neumann and Shapiro show that there is a virtually abelian group $G$ with fixed generating set $X$ such that no regular (i.e. $1$-regular) language of geodesics can surject to the elements of $G$. So in particular, there are virtually abelian groups with generating sets for which \emph{no} geodesic normal form is regular. This does not imply that there is a virtually abelian group $G$ for which one cannot obtain regular geodesic normal forms irrespective of generating set: one can always get such a normal form after possibly enlarging the generating set that was given, as shown in a different paper by Neumann and Shapiro, (\cite{NeumannShapiro95}).

Here we complete the picture to show that for \emph{any} monoid generating set $\Sigma$ of a virtually abelian group \(G\) there always exists some geodesic normal form $\GF(G, \Sigma)$, which is the image under \(\theta\) (see Definition \ref{def:psipi}) of an \(m\)-regular language for an appropriate value to \(m\). We will use $\GF(G)$ instead of $\GF(G, \Sigma)$ in most cases since $\Sigma$ should be clear from the context (see Notation \ref{not:GF}).

 Furthermore, we show that there is a subset of $\GF(G)$ consisting of geodesic representatives for the conjugacy classes of the group; moreover, for any subgroup, there is a subset of $\GF(G)$ consisting of geodesic representatives for the cosets. For any rational subset of the group, the normal form representatives (contained in $\GF(G)$) are also the image of an \(m\)-regular language. In particular, the $\GF(G)$-representatives of definable sets form EDT0L languages; that is, the same result as for the normal form $\NF(G)$ in the previous section holds for $\GF(G)$.

Benson introduced a normal form for elements of virtually abelian groups in 1983 \cite{Benson}. Unlike $\mathrm{NF}(G)$, it consists of geodesic representatives. However, the construction is much more involved. We give a brief overview below.

Let $G$ be virtually abelian with index-$d$ subgroup $\mathbb{Z}^k$, and choose a finite generating set $\Sigma$ that generates \(G\) as a monoid. As in Section \ref{sec:prelims}, for a word \(w\in\Sigma^*\), we will write \(\overline{w}\) for the element it represents in \(G\). Similarly, the image of a subset \(W\subset\Sigma^*\) in \(G\) will be denoted \(\overline{W}\).
A function $\|\cdot\|\colon \Sigma\rightarrow\N_+$ will be called a \emph{weight function}. We extend this to $\|\cdot\|\colon \Sigma^*\rightarrow\N$, so that $\|s_1s_2\cdots s_l\|=\|s_1\|+\|s_2\|+\cdots+\|s_l\|$ for any word $s_1s_2\cdots s_l$. Define the weight of a group element as \[\|g\|=\mathrm{min}\left\{\|w\|\mid w\in \Sigma^*,~\overline{w}=g\right\}.\] If $\|s\|=1$ for all $s\in \Sigma$, this gives the usual notion of word length.

\begin{definition}
	With $\Sigma$ and $d$ as above, define an extended generating set
	\begin{equation}\label{eq:Stilde}
		S=S_\Sigma=\{s_1s_2\cdots s_k\mid s_i\in \Sigma,~1\leq k\leq d\}\subset G.
	\end{equation}
	The weight of a generator $s_1s_2\cdots s_k\in S$ is defined with respect to our original weight function: $\|s_1s_2\cdots s_k\|=\sum_i\|s_i\|$.
\end{definition}
\begin{remark}\label{rem:gen_sets}
The weight of an element with respect to the new weighted generating set $S$ is equal to its weight with respect to $\Sigma$, and so the respective weighted growth series are equal.

Moreover, any geodesic word (i.e. a weight-minimal representative for an element) over $S=S_{\Sigma}$ is geodesic over the initial generating set $\Sigma$.
\end{remark}

\begin{definition}\label{def:XY}
	Write $X:=S\cap\Z^k=\{x_1,\ldots,x_r\}$ and $Y:=S\setminus(S\cap\Z^k)=\{y_1,\ldots,y_s\}$, and call any word in $Y^*$ a \emph{pattern}.
\end{definition}

\begin{definition}
	Let $p\colon S\rightarrow Y$ be the map that records the generators not in $\Z^k$: \[p\colon s_i\mapsto\begin{cases} \varepsilon & \text{ if }s_i\in X\\ s_i & \text{ if }s_i\in Y \end{cases}.\] This extends to a monoid homomorphism $p\colon S^*\rightarrow Y^*$, which ignores those generators in a word that belong to $\Z^k$. For $w\in S^*$ we call $p(w)$ the \emph{pattern} of $w$.
\end{definition}
\begin{definition}\label{def:Wpi}
	Let $\pi=y_1y_2\cdots y_l\in Y^*$ be some pattern. Then we denote by $W^\pi$ the subset of $S^*$ consisting of \emph{$\pi$-patterned words}, that is, all words of the form
	\begin{equation}\label{eq:Wpi}
		x_1^{i_1}x_2^{i_2}\cdots x_r^{i_r} y_1 x_1^{i_{r+1}}x_2^{i_{r+2}}\cdots x_r^{i_{2r}} y_2 x_1^{i_{2r+1}}x_2^{i_{2r+2}}\cdots x_r^{i_{3r}} \cdots y_l x_1^{i_{lr+1}}x_2^{i_{lr+2}}\cdots x_r^{i_{lr+r}}
	\end{equation}
	where $i_j\in\N$.
\end{definition}
Note that every group element has a geodesic representative in some $W^{\pi}$ (since reordering the powers of \(x_1,\ldots,x_r\) does not change the element represented, and cannot increase the length of the word). This definition allows us to identify patterned words with vectors of non-negative integers, by focussing on just the powers of the generators in $X$ as follows.
\begin{definition}\label{def:phipi}
	Fix a pattern $\pi$ of length $l$, and write $m_\pi=lr+r$, where $r=|X|$ as above. Define the bijection $\phi_{\pi}\colon W^{\pi}\to\N^{m_\pi}$ that records, for every patterned word, the exponents of its $\mathbb{Z}^k$ coordinates:
	\[\phi_\pi\colon x_1^{i_1}x_2^{i_2}\cdots x_r^{i_r} y_1 x_1^{i_{r+1}}x_2^{i_{r+2}}\cdots x_r^{i_{2r}} y_2 \cdots y_l x_1^{i_{lr+1}}x_2^{i_{lr+2}}\cdots x_r^{i_{lr+r}}\mapsto (i_1, \ i_2, \ \ldots, \ i_{lr+r}).\]
\end{definition}
It will also be useful to view elements of $W^\pi$ as tuples of powers of generators, as follows.
\begin{definition}\label{def:psipi}
	Fix a pattern $\pi$. Let $\psi_\pi\colon W^\pi\to (S^*)^{m_\pi+|\pi|}$ be the bijection between words and tuples given by
	\begin{align*}
		\psi_\pi\colon& x_1^{i_1}x_2^{i_2}\cdots x_r^{i_r} 	y_1 x_1^{i_{r+1}}x_2^{i_{r+2}}\cdots x_r^{i_{2r}} y_2 \cdots y_l x_1^{i_{lr+1}}x_2^{i_{lr+2}}\cdots x_r^{i_{lr+r}}\\
		&\mapsto (x_1^{i_1}, x_2^{i_2},\ldots,x_r^{i_r}, y_1,x_1^{i_{r+1}}, x_2^{i_{r+2}},\ldots,x_r^{i_{2r}},\ldots, y_l, x_1^{i_{lr}}, x_2^{i_{lr+2}},\ldots, x_r^{i_{lr+r}}).
	\end{align*}
	Note that \(\psi_\pi\) is precisely the inverse of the map \(\theta\) defined in Proposition \ref{prop:forgetful}.
\end{definition}

Definition \ref{def:phipi} will allow us to work with subsets of $\Z^{m_\pi}$ in place of sets of words. We apply the weight function $\|\cdot\|$ to $\Z^{m_\pi}$ in the natural way, weighting each coordinate with the weight of the corresponding $x\in X$. More formally, we have
\begin{equation*}
	\|(i_1, \ \ldots, \ i_{m_\pi})\| := \sum_{j=1}^{m_\pi} i_j\|x_{j\bmod r}\|,
\end{equation*}
where we take \(\|x_0\|=\|x_r\|\).
Then $\phi_\pi$ preserves the weight of words in $W^\pi$, up to a constant:
\begin{equation*}
	\|\phi_\pi(w)\| = \|w\| - \|\pi\|.
\end{equation*}
Fix a transversal $T$ for the cosets of $\Z^k$ in $G$. Note that, since $\Z^k$ is a normal subgroup, we can move each $y_i$ in the word \eqref{eq:Wpi} to the right, modifying only the generators from $X$, and we have $\overline{w}\in\Z^k\overline{\pi}$ for any $w\in S^*$ as in (\ref{eq:Wpi}) and $\overline{\pi}\in T$. Thus $\overline{W^\pi}\subset\Z^kt_\pi$ for some $t_\pi\in T$ where $\overline{\pi}\in\Z^kt_\pi$, i.e. the $\Z^k$-coset of $\overline{w}$ depends only on its pattern $p(w)$.

It turns out that we can pass from a word $w\in W^\pi$ to the normal form $\mathrm{NF}(\overline{w})$ of Section \ref{sec:NF} (with respect to $T$ and the standard basis for $\Z^k$) using an integral affine map.

\begin{proposition}[Section 12 of \cite{Benson}]\label{prop:affA}
	For each pattern $\pi\in Y^*$, there exists an integer affine map $\cA_\pi\colon\N^{m_\pi}\to\Z^k$ such that for each $w\in W^\pi$, $\overline{w}=\left(\cA_\pi\circ \phi_\pi(w)\right)t_\pi\in\mathrm{NF}(G)$.
\end{proposition}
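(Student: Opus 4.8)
The plan is to convert a $\pi$-patterned word into an element of $\Z^k$ (times the fixed transversal element $t_\pi$) by pushing every pattern letter $y_i$ to the right, using the normality of $\Z^k$ in $G$. Write $w\in W^\pi$ as $w=B_0\,y_1\,B_1\,y_2\cdots y_l\,B_l$, where $\pi=y_1\cdots y_l$ and each $B_j=x_1^{i_{jr+1}}\cdots x_r^{i_{jr+r}}$ is the block of $X$-generators between $y_j$ and $y_{j+1}$ (with $B_0$ the initial block). Every $\overline{B_j}$ lies in $\Z^k$, so the only obstruction to reading off the $\Z^k$-coordinate of $\overline{w}$ is the interleaving of the $y_j$. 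The key observation is that commuting a $y$ past an element of $\Z^k$ replaces that element by its conjugate, which again lies in $\Z^k$ precisely because the subgroup is normal.

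To make this precise, for $0\leq j\leq l$ set $g_j=\overline{y_1\cdots y_j}$ (so $g_0=1$ and $g_l=\overline{\pi}$), and let $\Theta_j\colon\Z^k\to\Z^k$ be conjugation by $g_j$, that is $\Theta_j(a)=g_j a\,g_j^{-1}$. Since $\Z^k$ is normal, each $\Theta_j$ is an automorphism of $\Z^k$, hence is given by a matrix $M_j\in\mathrm{GL}_k(\Z)$, with $\Theta_0=\mathrm{id}$. A routine induction, moving the pattern letters rightward one block at a time, then yields (writing $\Z^k$ additively)
\[
\overline{w}=\Big(\sum_{j=0}^{l}\Theta_j\big(\overline{B_j}\big)\Big)\cdot\overline{\pi}.
\]

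It remains to check that the bracketed $\Z^k$-element depends integer-affinely on $\phi_\pi(w)=(i_1,\ldots,i_{m_\pi})$. Let $N$ be the $k\times r$ integer matrix whose columns are the vectors $\overline{x_1},\ldots,\overline{x_r}\in\Z^k$; then $\overline{B_j}=N\,(i_{jr+1},\ldots,i_{jr+r})^{\T}$ is linear in the exponents of its block, so $\sum_{j=0}^l\Theta_j(\overline{B_j})=\sum_{j=0}^l M_jN\,(i_{jr+1},\ldots,i_{jr+r})^{\T}$ is the image of $\phi_\pi(w)$ under the single $\Z$-linear map with block matrix $[\,M_0N\mid M_1N\mid\cdots\mid M_lN\,]$. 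Finally, writing $\overline{\pi}=c_\pi t_\pi$ with $c_\pi\in\Z^k$ (possible since $\overline{\pi}\in\Z^kt_\pi$) and using that $\Z^k$ is abelian, we obtain $\overline{w}=\big(\sum_{j=0}^l\Theta_j(\overline{B_j})+c_\pi\big)t_\pi$. Hence $\cA_\pi(\mathbf{i})=[\,M_0N\mid\cdots\mid M_lN\,]\mathbf{i}^{\T}+c_\pi$ is the required integer affine map (restricted to $\N^{m_\pi}$): its value lies in $\Z^k$, and writing that value in $\mathrm{NF}(\Z^k)$ followed by $t_\pi$ produces $\overline{w}$ as an element of $\mathrm{NF}(G)$.

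The computations are all elementary linear algebra; the only step requiring care is the conjugation bookkeeping behind the displayed identity — keeping track of which prefix $g_j$ acts on which block, and checking that the resulting $M_j$ are genuinely integral and invertible, which is exactly normality of $\Z^k$. The constant term $c_\pi$ arises solely from the discrepancy between $\overline{\pi}$ and the chosen representative $t_\pi$, so if one took $t_\pi=\overline{\pi}$ the map $\cA_\pi$ would in fact be linear.
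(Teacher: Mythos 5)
Your proof is correct and follows exactly the route the paper takes: it does not prove this proposition itself but cites Benson (Section 12), and the paragraph preceding the statement sketches precisely your argument of pushing each $y_i$ rightward past the $X$-blocks using normality of $\Z^k$, so that each block is replaced by its image under the integral conjugation matrix and the exponent vector is transformed by a single integer affine map. Your bookkeeping with the prefix conjugations $\Theta_j$ and the block matrix $[\,M_0N\mid\cdots\mid M_lN\,]$ plus the constant $c_\pi$ is a faithful and complete reconstruction of that standard argument.
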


One of the key ingredients in Benson's work is to consider a certain finite set of patterns, more specifically, the ones of length at most the index of the maximal normal free abelian subgroup in $G$, as below. As the results in the remaining of the section show, it is enough to consider this finite set of patterns to obtain representatives of elements, conjugacy classes, cosets, etc for the entire group.

\begin{definition}
	Let $P\subset Y^*$ be the set of all patterns of length at most $d=[G\colon\Z^k]$.
\end{definition}

The following result provides a set of geodesic representatives for the cosets of a fixed subgroup $H$ by picking out the coset representatives belonging to each $W^{\pi}$, when $\pi$ is one of the finitely many patterns in $P$.

\begin{theorem}[\cite{Benson}, \cite{Evetts}]\label{thm:cosetreps}
	Let $G$ be a virtually abelian group with generating set $\Sigma$ and extended generating set $S=S_{\Sigma}$. Let $H$ be any subgroup of $G$. For each $\pi\in P$, there exists a set of words $U_H^\pi\subset W^\pi \subset S^*$, such that
	\begin{enumerate}
		\item the disjoint union $\bigcup_{\pi\in P} U^\pi_H$ consists of exactly one geodesic representative for every coset in $G/H$, and
		\item for each $\pi$, $\phi_\pi(U_H^\pi)\subset\Z^{m_\pi}$ is a polyhedral set.
	\end{enumerate}
\end{theorem}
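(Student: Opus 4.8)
The plan is to select, for each coset $C \in G/H$, one canonical patterned word by a well-order, and then to show that the chosen words are cut out of each $\N^{m_\pi}$ by polyhedral conditions. Write $L := H \cap \Z^k$, a subgroup of $\Z^k$ and hence polyhedral by Remark \ref{rem:pol}(ii). Since $[H:L] = [H\Z^k:\Z^k] \le d$ is finite, fix a finite decomposition $H = \bigsqcup_j h_j L$. First I would fix a linear order on the finite set $P$ and define a strict order $\prec$ on $\bigsqcup_{\pi\in P} W^\pi$: compare words first by weight $\|\cdot\|$, then by pattern, and finally — for two words of the same pattern $\pi$ — by the lexicographic order of their exponent vectors in $\N^{m_\pi}$. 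For each coset $C$ let $w_C$ be the $\prec$-least word with $\overline{w_C}\in C$ (this exists, since only finitely many words share any given weight), and set $U_H^\pi := \{w_C \mid C\in G/H\}\cap W^\pi$. As distinct patterns index disjoint sets $W^\pi$ and each coset contributes a single $w_C$, the disjoint union $\bigcup_{\pi\in P}U_H^\pi$ contains exactly one representative per coset, giving conclusion (1). Ordering by weight first makes each $w_C$ geodesic and its image a shortest element of $C$: this uses the structural result of Benson and Evetts that every group element, in particular a shortest element of $C$, has a geodesic representative among patterns of length at most $d$, forcing $\|w_C\| = \min_{h\in C}\|h\| = \|\overline{w_C}\|$.

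The technical heart is to show that the relation ``same $H$-coset'' pulls back to a polyhedral set. Fix $\pi,\rho\in P$ and, for $v\in\N^{m_\pi}$, $v'\in\N^{m_\rho}$, use Proposition \ref{prop:affA} to write $\overline{\phi_\pi^{-1}(v)} = \cA_\pi(v)\,t_\pi$ and $\overline{\phi_\rho^{-1}(v')} = \cA_\rho(v')\,t_\rho$. A direct computation, using that conjugation by $t_\pi$ is an integer-matrix automorphism of $\Z^k$, gives
\[\overline{\phi_\pi^{-1}(v)}^{-1}\,\overline{\phi_\rho^{-1}(v')} = F(v,v')\,t'',\]
where $t''\in T$ is the transversal part of $t_\pi^{-1}t_\rho$ and $F\colon\N^{m_\pi}\times\N^{m_\rho}\to\Z^k$ is an integer affine map. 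Writing each $h_j = \mathbf{d}_j t_j'$ in normal form, one checks that $\mathbf{c}\,t''\in H$ exactly when $t'' = t_j'$ for some $j$ and $\mathbf{c}$ lies in the coset $\mathbf{d}_j + t_j' L (t_j')^{-1}$ of a subgroup of $\Z^k$; the union of these over the relevant $j$ is a polyhedral set $\Omega_{t''}\subseteq\Z^k$. Hence the same-coset set $E_{\pi,\rho} = F^{-1}(\Omega_{t''})$ is polyhedral by Proposition \ref{prop:polyaffine}.

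Next I would encode the order $\prec$ polyhedrally and extract the minima. The condition $\phi_\rho^{-1}(v')\prec\phi_\pi^{-1}(v)$ is a finite union of polyhedral conditions in $(v,v')$: the weight comparison is a linear (in)equality, the pattern comparison is a constant, and the residual lexicographic comparison (present only when $\rho=\pi$) is a finite union of systems of coordinate equalities with one strict inequality. Intersecting this with $E_{\pi,\rho}$ and projecting to the $v$-coordinates (polyhedral by Remark \ref{rem:pol}(i)) yields a polyhedral set $N_\rho\subseteq\N^{m_\pi}$ consisting of those $v$ whose word is beaten, inside its coset, by some word of pattern $\rho$. A word $\phi_\pi^{-1}(v)$ is non-canonical iff $v\in\bigcup_{\rho\in P}N_\rho$, so
\[\phi_\pi(U_H^\pi) = \N^{m_\pi}\setminus\bigcup_{\rho\in P}N_\rho,\]
which is polyhedral by closure under finite union and complement (Proposition \ref{prop:polyclosed}). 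This is conclusion (2).

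The main obstacle is the computation in the second paragraph: translating the multiplicative membership $\overline{w}^{-1}\overline{w'}\in H$ — which couples the two affine charts through conjugation and a transversal correction — into a preimage of an explicitly polyhedral target, and in particular verifying that membership in $H$ decomposes, transversal-coordinate by transversal-coordinate, into finitely many polyhedral cosets coming from $L = H\cap\Z^k$. The remaining steps are bookkeeping with the closure properties of polyhedral sets; the only external input is the cited Benson--Evetts bound ensuring that shortest coset representatives already occur among patterns in $P$.
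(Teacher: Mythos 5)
The paper does not actually prove Theorem \ref{thm:cosetreps}; it is imported from Benson and Evetts with a citation, so there is no in-text argument to compare against. Your reconstruction is correct and follows the same strategy as those sources: a well-order (weight, then pattern, then lexicographic on exponent vectors) selects one representative per coset, and the set of exponent vectors whose word is ``beaten'' within its coset is exhibited as a coordinate projection of a polyhedral subset of $\N^{m_\pi}\times\N^{m_\rho}$, so that $\phi_\pi(U_H^\pi)$ is polyhedral by Proposition \ref{prop:polyclosed} and Remark \ref{rem:pol}(i). The two substantive steps both check out. First, the same-coset relation is polyhedral: $\overline{\phi_\pi^{-1}(v)}^{-1}\,\overline{\phi_\rho^{-1}(v')}=F(v,v')\,t''$ with $F$ integer affine (conjugation by a transversal element acts on $\Z^k$ by an integer matrix, and $t_\pi^{-1}t_\rho$ contributes a constant in $\Z^k$ times $t''$), and $\mathbf{c}\,t''\in H=\bigsqcup_j \mathbf{d}_jt_j'L$ forces $t''=t_j'$ and $\mathbf{c}\in\mathbf{d}_j+t_j'L(t_j')^{-1}$, a coset of a subgroup of $\Z^k$ and hence polyhedral; then Proposition \ref{prop:polyaffine} applies (one should intersect $F^{-1}(\Omega_{t''})$ with the orthant $\N^{m_\pi}\times\N^{m_\rho}$, which is harmless). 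Second, geodesicity of the survivors needs exactly the fact you flag: that $\min\{\|w\|\mid w\in\bigsqcup_{\pi\in P}W^\pi,\ \overline{w}\in C\}$ equals $\min_{h\in C}\|h\|$, i.e.\ Benson's theorem that every element has a geodesic representative whose pattern has length at most $d$ over the extended generating set $S_\Sigma$. That bounded-pattern result is the genuinely hard content of Benson's construction and is not reproved here, but citing it is legitimate since it is strictly weaker than the statement at hand, and the theorem is in any case attributed to Benson and Evetts. The remaining details (totality of $\prec$ across the disjoint sets $W^\pi$, finiteness of each weight level, polyhedrality of the weight and lexicographic comparisons) are handled correctly.
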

\begin{notation}\label{not:GF}\leavevmode
\begin{enumerate}
\item We use the notation \[\GF(G/H)=\bigcup_{\pi\in P} U^\pi_H\] to capture the set of geodesic representatives for the cosets of $H$ in $G$ in Theorem \ref{thm:cosetreps}(1). When $H=\{1\}$, Theorem \ref{thm:cosetreps} gives a geodesic normal form for the elements of $G$ with respect to the extended generating set $S_\Sigma$ (and therefore with respect to $\Sigma$, see Remark \ref{rem:gen_sets}), so \[\GF(G)=\GF(G/\{1\})\] is the geodesic normal form mentioned at the beginning of the section, to be distinguished from the natural normal form $\NF(G)$ of the previous section.

\item In general, if $V\subseteq G$ is a subset of $G$ or if $V$ is a set of objects with representatives in $G$ (such as cosets, or conjugacy classes), we denote by $\GF(V) \subseteq \GF(G)$ the set of geodesic representatives for elements in $V$, with one representative per element.
\end{enumerate}
\end{notation}

The following theorem about conjugacy representatives follows from \cite{Evetts} but isn't explicitly stated there. Note that in that paper, the sets $U_c^\pi$ are called $\mathcal{L}_\pi$.
\begin{theorem}[\cite{Evetts}]\label{thm:conjreps} Let $G$ be a virtually abelian group and let $\mathcal{C}=\mathcal{C}(G)$ be the set of conjugacy classes of $G$.

	For each $\pi\in P$ there exists a set of words $U_c^\pi\subset W^\pi$ such that
	\begin{enumerate}
		\item The disjoint union $\bigcup_{\pi\in P}U_c^\pi$ consists of exactly one geodesic representative for every conjugacy class of $G$, that is, 
		\[\GF(\mathcal{C})=\bigcup_{\pi\in P}U_c^\pi,\]
		
		\item and for each $\pi$, $\phi_\pi(U_c^\pi)\subset\Z^{m_\pi}$ is a polyhedral set.
	\end{enumerate}
\end{theorem}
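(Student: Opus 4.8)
The plan is to follow the structure of the proof of Theorem~\ref{thm:cosetreps}, replacing the coset equivalence by conjugacy, and to show that after passing through $\phi_\pi$ both the conjugacy relation and the selection of one representative per class are compatible with polyhedral structure. The starting point is that conjugacy in $G$ respects the projection to $\Delta$: conjugate elements have $\Delta$-images lying in a single conjugacy class of the finite group $\Delta$. Since each pattern $\pi\in P$ determines a coset $\Z^k t_\pi$ and hence a fixed image $\delta_\pi=\overline{t_\pi}\in\Delta$, I would first group the patterns of $P$ according to the $\Delta$-conjugacy class of $\delta_\pi$.

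First I would analyse conjugacy within a single coset. For $g\in\Z^k t_\pi$ with image $\delta=\delta_\pi$, conjugating by an element of $\Z^k$ translates the $\Z^k$-part of $g$ by an element of the subgroup $\operatorname{im}(1-\delta)\subseteq\Z^k$, where $1-\delta$ denotes the endomorphism $\mathbf b\mapsto\mathbf b-\delta\cdot\mathbf b$ arising from the conjugation action of $\Delta$ on $\Z^k$. Thus the $\Z^k$-conjugation orbits inside a coset are exactly the cosets of the fixed subgroup $\operatorname{im}(1-\delta)$, which is polyhedral by Remark~\ref{rem:pol}(ii). Conjugation by the finitely many transversal elements then acts on these orbits by a finite collection of integer affine maps, permuting the cosets whose $\Delta$-images are conjugate to $\delta$ and inducing the residual action of the centraliser $C_\Delta(\delta)$. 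Transporting everything through the bijections $\phi_\pi$ and the affine maps $\cA_\pi$ of Proposition~\ref{prop:affA}, the conjugacy relation on $\bigsqcup_\pi W^\pi$ becomes, in the $\N^{m_\pi}$ coordinates, membership in cosets of a fixed polyhedral subgroup together with identification under finitely many integer affine maps.

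Next I would incorporate geodesity. By Theorem~\ref{thm:cosetreps} applied with $H=\{1\}$ (which yields the geodesic normal form $\GF(G)$ of Notation~\ref{not:GF}), the geodesic words in $W^\pi$ map under $\phi_\pi$ to a polyhedral subset $\mathrm{Geo}_\pi\subseteq\N^{m_\pi}$. The set $U_c^\pi$ should consist of those geodesic $\pi$-patterned words that are the chosen representative of their conjugacy class. To make the choice canonical I would fix a total order on $P$ and a shortlex order on each $W^\pi$, and select in each conjugacy class the least geodesic word under this combined order. The point is that being this minimum is expressible by comparing an element to its finitely many images under the integer affine maps describing the conjugacy relation; each comparison is a Boolean combination of affine inequalities and congruences, hence polyhedral by Propositions~\ref{prop:polyclosed} and~\ref{prop:polyaffine}, and intersecting these finitely many conditions with $\mathrm{Geo}_\pi$ keeps $\phi_\pi(U_c^\pi)$ polyhedral. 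Property~(1) then follows because every group element, hence every conjugacy class, has a geodesic representative in some $W^\pi$, and the minimal selection guarantees exactly one representative per class.

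The main obstacle I expect is the third step: showing that selecting a single geodesic representative per conjugacy class preserves polyhedrality. Choosing one element per equivalence class is not in general a polyhedral operation, and it works here only because, coset by coset, the conjugacy classes decompose into cosets of the fixed polyhedral subgroup $\operatorname{im}(1-\delta)$ acted on by the finite centraliser. Verifying that a fundamental-domain choice for this lattice, together with an orbit-representative choice for the finite affine action, can be encoded by finitely many affine inequalities \emph{uniformly} across the finitely many patterns $\pi\in P$, and that this is compatible with the geodesic constraint coming from Benson's normal form, is the technical heart of the argument; this is precisely the content extracted from the construction of the sets $\mathcal{L}_\pi$ in \cite{Evetts}.
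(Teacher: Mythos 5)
The paper does not actually prove this statement: it is imported from \cite{Evetts} (with the remark that the sets $U_c^\pi$ appear there under the name $\mathcal{L}_\pi$), so your proposal can only be measured against the construction in that reference. Your overall skeleton does match it: conjugacy is analysed coset by coset, the $\Z^k$-conjugation orbit of an element of $\Z^k t_\pi$ is a coset of the subgroup $\operatorname{im}(1-\delta_\pi)$, the transversal elements contribute a residual finite action by integer affine maps, and everything is transported through $\phi_\pi$ and $\cA_\pi$ and intersected with Benson's geodesic sets.

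The gap is in your selection step. You assert that being the minimal geodesic word in a conjugacy class ``is expressible by comparing an element to its finitely many images under the integer affine maps describing the conjugacy relation.'' This is false as stated: by your own first paragraph, the conjugates of $g$ lying in a fixed coset form an entire coset of $\operatorname{im}(1-\delta)$, which is infinite whenever $\delta$ acts nontrivially, so there are infinitely many conjugates to compare against, not finitely many. The condition ``no conjugate of $g$ precedes $g$'' is a universally quantified statement over an infinite polyhedral set, and the mechanism that rescues it is not a finite Boolean combination of inequalities but the closure of polyhedral sets under integer affine images (hence under projection, i.e.\ existential quantification, Proposition \ref{prop:polyaffine}) and under complementation (Proposition \ref{prop:polyclosed}): one expresses the set of pairs $(v,v')$ with $v'$ conjugate to $v$ and $v'$ preceding $v$ as a polyhedral relation, projects to the first coordinate, and takes the complement. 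Your fallback in the last paragraph --- a fundamental domain for the lattice $\operatorname{im}(1-\delta)$ together with orbit representatives for the finite action --- does not repair this, because a fundamental-domain transversal of the conjugacy classes has no reason to consist of shortest elements of their classes, whereas $\GF(\mathcal{C})$ must consist of geodesic words for minimal-length class representatives (this is what makes the conjugacy growth series come out correctly). The missing intermediate step is precisely to show, by the projection-and-complement argument applied to the length comparison $\|h^{-1}gh\|<\|g\|$, that the set of elements of minimal length in their conjugacy class is coset-wise polyhedral, and only then to break ties by an order compatible with the polyhedral structure; that is the actual technical content of \cite{Evetts} which your sketch defers to rather than supplies.
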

 
 Lemma \ref{lem:semilinearregular} translates (patterned) words over the extended generating set $S$ of $G$ into the corresponding tuples, and thus connects semilinear sets and $n$-regular $n$-variable sets.

\begin{lemma}\label{lem:semilinearregular}
	If $V\subset W^{\pi} \subset S^*$ is a set of $\pi$-patterned words, and $\phi_{\pi}(V) \subset \N^{m_{\pi}}$ is semilinear, then the set of tuples $\psi_\pi(V)\subset(S^*)^{m_\pi+|\pi|}$ corresponding to $V$ is $(m_\pi+|\pi|)$-regular.
\end{lemma}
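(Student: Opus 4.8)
The plan is to factor $\psi_\pi$ through $\phi_\pi$ and then build the required automaton directly from a semilinear decomposition. Observe that there is a map $\Phi\colon\N^{m_\pi}\to(S^*)^{m_\pi+|\pi|}$ which, given an exponent vector $(i_1,\ldots,i_{m_\pi})$, returns
\[\left(x_1^{i_1},\ldots,x_r^{i_r},\,y_1,\,x_1^{i_{r+1}},\ldots,x_r^{i_{2r}},\,y_2,\ldots,y_l,\,x_1^{i_{lr+1}},\ldots,x_r^{i_{lr+r}}\right),\]
that is, it turns each exponent into a power of the single $X$-generator assigned to that coordinate and inserts the fixed pattern letters $y_1,\ldots,y_l$ into their own coordinates. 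By construction $\psi_\pi=\Phi\circ\phi_\pi$, and since $\phi_\pi$ is a bijection we have $\psi_\pi(V)=\Phi(\phi_\pi(V))$. Hence it suffices to prove that $\Phi(A)$ is $(m_\pi+|\pi|)$-regular whenever $A\subseteq\N^{m_\pi}$ is semilinear.

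Because $(m_\pi+|\pi|)$-regular languages are closed under finite union (Lemma \ref{lem:regularunion}) and a semilinear set is a finite union of linear sets, I would reduce to the case $A=c+\{d_1,\ldots,d_p\}^*$ with $c,d_1,\ldots,d_p\in\N^{m_\pi}$. I then construct an $(m_\pi+|\pi|)$-$\fsa$ for $\Phi(A)$ as follows: from the start state lay down a simple directed path whose edges (each with exactly one non-$\varepsilon$ entry) write $c_s$ copies of the generator assigned to exponent coordinate $s$ into that coordinate, and the single letter $y_i$ into each pattern coordinate, ending at a hub state $q$; then at $q$, for each $d_j$ attach a directed cycle back to $q$ whose edges write $d_{j,s}$ copies of the assigned generator into exponent coordinate $s$ and touch no pattern coordinate; finally declare $q$ the unique accept state.

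The verification is routine bookkeeping. A path to $q$ followed by $n_j$ traversals of the $d_j$-cycle, in any order, writes into exponent coordinate $s$ exactly $c_s+\sum_j n_j d_{j,s}$ copies of the assigned generator and writes each $y_i$ exactly once; reading off the tuple and deleting the $\varepsilon$s yields precisely $\Phi\!\left(c+\sum_j n_j d_j\right)$, so the accepted language is exactly $\Phi(A)$. The construction is correct because all letters written to a fixed exponent coordinate are copies of one and the same generator, so their internal order is immaterial, while appends to distinct coordinates commute in the recovered tuple; the loops may therefore be interleaved freely at the hub.

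The main obstacle is not mathematical depth but making precise that the single hub with one cycle per linear generator produces exactly $c+\{d_1,\ldots,d_p\}^*$ and nothing more. This is where the hypothesis $A\subseteq\N^{m_\pi}$ does the work: lying in the positive orthant guarantees every coordinate is a nonnegative power of a single generator, so none of the cancellation or reordering pathologies (such as forbidden subwords $a_iA_i$) can arise. The argument parallels Proposition \ref{prop:monotonenregular}, differing only in the target tuple structure, which here additionally carries the fixed pattern letters $y_1,\ldots,y_l$ as separate coordinates.
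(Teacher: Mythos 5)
Your proof is correct and follows essentially the same route as the paper: reduce to a single linear set $a+\{b_1,\ldots,b_p\}^*$ via closure of $n$-regular languages under finite union (Lemma \ref{lem:regularunion}), then build an asynchronous automaton consisting of an initial path emitting the constant part, one loop per period vector, and edges emitting the fixed pattern letters. The only cosmetic differences are that you emit the $y_i$'s on the initial path rather than on a final path to the accept state, and that you make the factorisation $\psi_\pi=\Phi\circ\phi_\pi$ explicit; neither affects correctness.
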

\begin{proof}
	By hypothesis, $\phi_\pi(V)\subset\N^{m_\pi}$ is a union of linear subsets of $\N^{m_\pi}$. Since a union of $n$-regular languages is $n$-regular (Lemma \ref{lem:regularunion}), it suffices to prove the Lemma for the case where $\phi_\pi(V)$ is linear, say of the form $a+\{b_1,\ldots,b_k\}^*$ for $a,b_1,\ldots,b_k\in\N^{m_\pi}$. We construct an $(m_\pi+|\pi|)$-variable finite state automaton that accepts $\psi_\pi(V)$.
	
	Given a vector $\bm{i}=(i_1,i_2,\ldots,i_{m_{\pi}})\in\N^{m_\pi}$, denote by $p_{\bm{i}}$ a path of $|\bm{i}|_{\ell_1}$ consecutive edges, each labelled by an $(m_{\pi}+|\pi|)$-tuple with an element of $S$ in one component and $\varepsilon$'s elsewhere, such that the tuple obtained by reading along the path and then deleting $\varepsilon$'s is equal to $(x_1^{i_1},x_2^{i_2},\ldots,x_r^{i_r},\varepsilon,x_1^{i_{r+1}},x_2^{i_{r+2}},\ldots,x_r^{i_{2r}},\varepsilon,\ldots,\varepsilon,x_1^{i_{kr+1}},x_2^{i_{kr+2}},\ldots,x_r^{i_{kr+r}})$, i.e. the tuple $\psi_\pi\circ\phi_\pi^{-1}(\bm{i})$ with the $y_i$s removed.
	
	Our automaton has a single start state $s_s$, a single accept state $s_a$, and $k$ marked states, labelled $s_1,\ldots,s_k$. Between $s_s$ and $s_1$, there is a path $p_a$ (with extra states added as necessary). For each $i\in\{1,\ldots,k\}$, there is a path $p_{b_i}$ that starts and ends at state $s_k$ (with extra states added as necessary). Between $s_i$ and $s_{i+1}$ (for each $i\in\{1,\ldots,k-1\}$), there is a single edge labelled with a vector of epsilons. Finally, from $s_k$ to $s_a$ there is a path $p_\pi$. See figure \ref{fig:semilinearregular} for a schematic diagram.
\end{proof}
\begin{figure}[!h]
	\centering
	\includegraphics[width=0.7\textwidth]{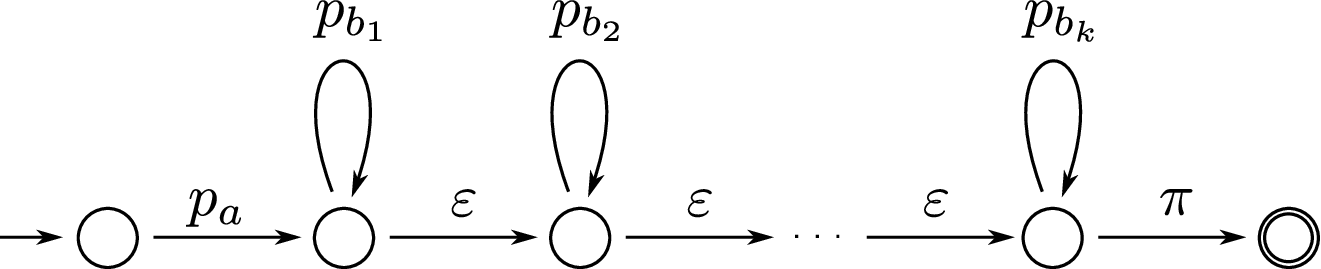}
	\caption{The \((m_\pi + |\pi|)\)-fsa of Lemma \ref{lem:semilinearregular}.}
	\label{fig:semilinearregular}
\end{figure}

Now we can prove the existence of a geodesic normal form that is $m$-regular.
\begin{theorem}\label{thm:GF}
	The geodesic normal form \(\mathrm{GF}(G)\) has the property that \(\psi_\pi(\mathrm{GF}(G))\) is an \(m\)-regular language for some $m$ depending on the generating set (where $\psi_{\pi}$ is the map from Definition \ref{def:psipi}). 
	
	Furthermore, the geodesic normal form representatives for cosets of any subgroup, and for conjugacy classes, also form $m$-regular languages (in their image under \(\psi_\pi\)).
\end{theorem}
\begin{proof}	
	From Theorem \ref{thm:cosetreps}, the set \(\mathrm{GF}(G)\) is a union of sets of the form \(U^{\pi}\), for some pattern \(\pi\) in $P$, each with the property that \(\phi_{\pi}(U^\pi)\subset\Z^{m_\pi}\) is polyhedral, and hence semilinear. Since each \(\phi_\pi(U^\pi)\) is contained in \(\N^{m_\pi}\), Theorem \ref{thm:ES} (3) implies that they are in fact semilinear subsets of $\N^{m_\pi}$. Lemmas \ref{lem:semilinearregular} and \ref{lem:regularunion} then finish the argument.
	
	An analogous argument applies for the sets \(\mathrm{GF}(G/H)\) and for \(\mathrm{GF}(\mathcal{C})\), via Theorem \ref{thm:conjreps}.
\end{proof}

It follows as a corollary of Theorems \ref{thm:cosetreps} and \ref{thm:conjreps} that the weighted standard, coset, and conjugacy growth series of $G$ are rational functions. See \cite{Evetts} for full details. We now complete the picture by demonstrating that any rational subset of $G$ also has rational weighted growth series and is $m$-regular. The following is a special case of Theorem 4.15 of \cite{EvettsLevine}. We include the proof here for completeness.

\begin{theorem}\label{thm:CWPrational}
Let G be a finitely generated virtually abelian group.
	If $V\subset G$ is coset-wise polyhedral then it has \(\N\)-rational (relative) weighted growth series.
\end{theorem}
\begin{proof}
	Fix a transversal $T$. For each $t\in T$, let $P_t\subset P$ denote the set of patterns $\pi$ with $\overline{\pi}\in\Z^kt$. The following subset of $S^*$ is a set of unique geodesics representatives for the elements of $V$.
	\begin{align*}
		\mathcal{V}:=\bigcup_{t\in T}\bigcup_{\pi\in P_t}\left\{u\in U^\pi\mid \cA_\pi\circ\phi_\pi(u)\in V_t\right\} = \bigcup_{t\in T}\bigcup_{\pi\in P_t} U^\pi \cap \left(\phi_\pi^{-1}\circ\cA_\pi^{-1}(V_t)\right).
	\end{align*}
	Applying $\phi_\pi$ to a component of this union yields a set $\phi_\pi(U^\pi)\cap \cA_\pi^{-1}(V_t)$, which is polyhedral (since $V_t$ is polyhedral), and therefore has \(\N\)-rational weighted growth series (by Proposition \ref{prop:growth2}). Since the bijection $\phi_\pi$ is length-preserving (up to addition of a constant), each component of the union has \(\N\)-rational weighted growth series, and therefore so does $V$ itself.
\end{proof}

In light of Proposition \ref{prop:rationalCWP}, which establishes the equivalence of coset-wise polyhedral and rational sets, Theorem \ref{thm:CWPrational} gives the following.
\begin{corollary} \label{cor:rational-rational}
	Let $G$ be a virtually abelian group. Rational subsets have \(\N\)-rational (relative) weighted growth series with respect to any generating set of $G$.
	 The following types of sets are rational, and therefore have rational growth series with respect to any set of generators of $G$:
\begin{enumerate}
\item elements of any fixed subgroup,
\item coset representatives of a fixed subgroup,
\item algebraic sets,
\item definable sets,
\item conjugacy representatives.
\end{enumerate}
\end{corollary}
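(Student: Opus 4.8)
The plan is to deduce everything from the machinery already assembled, so that the corollary is almost entirely a matter of assembling citations in the right order. The opening assertion --- that every rational subset $V$ has rational relative growth series with respect to any generating set --- is essentially immediate. Being rational is equivalent to being coset-wise polyhedral by Proposition~\ref{prop:rationalCWP}, and coset-wise polyhedrality is an intrinsic property of $V$, since Definition~\ref{def:CWP} refers only to the normal subgroup $\Z^k$ and a transversal, not to a generating set. Theorem~\ref{thm:CWPrational} then shows any coset-wise polyhedral set has rational growth series, and its proof is carried out relative to the geodesic normal form built from whatever finite generating set $\Sigma$ (and weight function) one starts with. Applying the theorem once for each choice of $\Sigma$ therefore yields rationality of the growth series for \emph{every} generating set. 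Consequently, once each family (1)--(5) is shown to be rational, its rational-growth conclusion follows automatically from this first sentence.

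The remaining work is purely to establish rationality of the five families, which I would split into an easy group and a harder group. For the easy group: a subgroup $H\leq G$ is finitely generated, since finitely generated virtually abelian groups are Noetherian (as already used in the proof of Corollary~\ref{Cor:DefRat}), so the image of $(\Sigma_H)^*$ for a finite generating set $\Sigma_H$ of $H$ is $H$ itself and $H$ is rational, settling (1). Definable sets are rational by Corollary~\ref{Cor:DefRat} directly, settling (4); here one works inside $G^n$, which is virtually abelian by Lemma~\ref{lem:directprod}. For algebraic sets (3), I would observe that the solution set of a system of group equations is cut out by the quantifier-free first-order formula that is the conjunction of those equations (with the coefficients as parameters), so an algebraic set is definable and hence rational by Corollary~\ref{Cor:DefRat}; alternatively one cites that algebraic sets are coset-wise polyhedral in \cite{EvettsLevine} and applies Proposition~\ref{prop:rationalCWP}.

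The harder group is (2) and (5), the coset and conjugacy representatives, which arrive packaged as geodesic words over the extended generating set rather than as manifestly rational subsets of $G$. Here I would use that Theorems~\ref{thm:cosetreps} and~\ref{thm:conjreps} provide, for each pattern $\pi\in P$, sets $U_H^\pi$ (resp. $U_c^\pi$) with $\phi_\pi(U_H^\pi)\subset\Z^{m_\pi}$ polyhedral, and that by Proposition~\ref{prop:affA} the image in $G$ of such a patterned set is $\cA_\pi\bigl(\phi_\pi(U_H^\pi)\bigr)t_\pi$, lying entirely in the single coset $\Z^k t_\pi$. The key step is to verify the coset-wise polyhedral condition of Definition~\ref{def:CWP} for $V=\overline{\GF(G/H)}$: for a fixed transversal element $t$, the slice $V_t$ equals the finite union, over all patterns $\pi$ with $\overline{\pi}\in\Z^k t$, of the sets $\cA_\pi(\phi_\pi(U_H^\pi))$, each polyhedral by Proposition~\ref{prop:polyaffine} (the affine image of a polyhedral set is polyhedral), and a finite union of polyhedral sets is polyhedral by Proposition~\ref{prop:polyclosed}. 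Thus $V$ is coset-wise polyhedral and hence rational by Proposition~\ref{prop:rationalCWP}; the identical argument with $U_c^\pi$ in place of $U_H^\pi$ disposes of conjugacy representatives.

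I expect the main obstacle to be bookkeeping in this last step rather than any deep new idea. One must track carefully that the $U^\pi$ are indexed over the finite set $P$ of patterns, that grouping them by the coset $\Z^k t_\pi\ni\overline{\pi}$ correctly reconstitutes each slice $V_t$ of Definition~\ref{def:CWP}, and that the affine maps $\cA_\pi$ really do land $\phi_\pi(U^\pi)$ in the intended coset as asserted in Proposition~\ref{prop:affA}. A secondary conceptual point worth stating explicitly is that rationality is a generating-set-independent property, which is what legitimises passing freely between the extended generating set $S_\Sigma$ used to build $\GF(G)$ and the arbitrary generating set appearing in the statement.
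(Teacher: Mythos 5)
Your proposal is correct and follows essentially the same route as the paper: the equivalence of rational and coset-wise polyhedral sets (Proposition~\ref{prop:rationalCWP}) combined with Theorem~\ref{thm:CWPrational} gives the growth statement for every generating set, and items (1)--(5) reduce to Corollary~\ref{Cor:DefRat}, Theorems~\ref{thm:cosetreps} and~\ref{thm:conjreps}, and finite generation of subgroups. Your explicit verification that $\overline{\GF(G/H)}$ and $\overline{\GF(\mathcal{C})}$ are coset-wise polyhedral via Propositions~\ref{prop:affA}, \ref{prop:polyaffine} and \ref{prop:polyclosed} is a correct filling-in of a step the paper leaves implicit.
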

We can easily generalise Theorem \ref{thm:GF} and obtain geodesic normal form representatives for coset-wise polyhedral sets, and therefore rational sets in general.
\begin{theorem}
	Let $G$ be a virtually abelian group and $V$ a coset-wise polyhedral subset of $G$. The geodesic normal form representatives given by $\GF(V)$ form an $m$-regular language when viewed as tuples for some positive integer $m$.
\end{theorem}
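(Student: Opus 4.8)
The plan is to mimic the proof of Theorem \ref{thm:GF} almost verbatim, replacing the role of the coset/conjugacy representative sets $U^\pi$ with the sets of $\pi$-patterned words that represent $V$. Recall from the proof of Theorem \ref{thm:CWPrational} that, having fixed a transversal $T$ and having written the coset-wise polyhedral set $V$ as $\bigcup_{t\in T} V_t t$ with each $V_t\subset\Z^k$ polyhedral, the set
\[
\mathcal{V}=\bigcup_{t\in T}\bigcup_{\pi\in P_t} U^\pi\cap\left(\phi_\pi^{-1}\circ\cA_\pi^{-1}(V_t)\right)
\]
is a set of unique geodesic representatives for the elements of $V$, i.e. $\mathcal{V}=\GF(V)$ (up to the bookkeeping of one representative per element).

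First I would observe that for each pattern $\pi$ the corresponding piece $V^\pi:=U^\pi\cap\left(\phi_\pi^{-1}\circ\cA_\pi^{-1}(V_t)\right)$ is again a set of $\pi$-patterned words whose image $\phi_\pi(V^\pi)\subset\N^{m_\pi}$ is polyhedral. Indeed, $\phi_\pi(V^\pi)=\phi_\pi(U^\pi)\cap\cA_\pi^{-1}(V_t)$; here $\phi_\pi(U^\pi)$ is polyhedral by Theorem \ref{thm:cosetreps}(2), and $\cA_\pi^{-1}(V_t)$ is polyhedral by Proposition \ref{prop:polyaffine} since $\cA_\pi$ is an integer affine map and $V_t$ is polyhedral; the intersection is then polyhedral by Proposition \ref{prop:polyclosed}. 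In particular $\phi_\pi(V^\pi)$ is semilinear by Proposition \ref{Prop:PeqS}, and since it is contained in $\N^{m_\pi}$, Theorem \ref{thm:ES}(3) upgrades this to semilinearity as a subset of $\N^{m_\pi}$.

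Next I would apply Lemma \ref{lem:semilinearregular} to each $V^\pi$: since $V^\pi\subset W^\pi$ is a set of $\pi$-patterned words with $\phi_\pi(V^\pi)$ semilinear in $\N^{m_\pi}$, the set of tuples $\psi_\pi(V^\pi)\subset(S^*)^{m_\pi+|\pi|}$ is $(m_\pi+|\pi|)$-regular. Finally, since there are only finitely many patterns $\pi\in P$ (as $P$ consists of the finitely many patterns of length at most $d$), the full tuple-language $\psi(\GF(V))=\bigcup_{t\in T}\bigcup_{\pi\in P_t}\psi_\pi(V^\pi)$ is a finite union of $n$-regular languages, hence $m$-regular by Lemma \ref{lem:regularunion}, with $m=\max_\pi(m_\pi+|\pi|)$.

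This argument is essentially routine given the machinery already assembled, so I do not anticipate a serious obstacle; the only point requiring care is the identification of $\GF(V)$ with the set $\mathcal{V}$ from Theorem \ref{thm:CWPrational}, namely verifying that intersecting the coset-representative language $U^\pi$ (rather than the full geodesic normal form $\GF(G)$) with the preimage of $V_t$ genuinely yields one geodesic representative per element of $V$. Since $\GF(G)=\bigcup_\pi U^\pi$ already supplies exactly one geodesic normal form per group element, restricting to those representatives landing in $V$ gives the claim, so this is a matter of unwinding definitions rather than a genuine difficulty.
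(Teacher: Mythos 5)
Your proposal is correct and follows the paper's own argument essentially verbatim: both decompose $\GF(V)$ into the pieces $\phi_\pi(U^\pi)\cap\cA_\pi^{-1}(V_t)$ from the proof of Theorem \ref{thm:CWPrational}, note these are polyhedral hence semilinear, and conclude via Lemma \ref{lem:semilinearregular} and Lemma \ref{lem:regularunion}. Your version simply spells out the closure properties (Propositions \ref{prop:polyaffine}, \ref{prop:polyclosed}) and the use of Theorem \ref{thm:ES}(3) that the paper leaves implicit.
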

\begin{proof}
	Let $V$ be coset-wise polyhedral. As in the above proof, the Benson normal form (GF) representation of $V$ is a finite union of sets of the form $\phi_\pi(U^\pi)\cap \cA_\pi^{-1}(V_t)$, which are polyhedral and hence semilinear. The result then follows from Lemma \ref{lem:semilinearregular} and Lemma \ref{lem:regularunion}.
\end{proof}
Finally, Proposition \ref{prop:forgetful}, which shows how the forgetful map takes $n$-regular $n$-variable language to EDT0L ones, implies that any rational set has unique geodesic representatives that form an EDT0L language.
\begin{corollary}\label{cor:NFrational}
	Let $G$ be a virtually abelian group with finite monoid generating set $\Sigma$, and let $R \subseteq G$ a rational set. 
	There exists some positive integer $m$ such that the geodesic normal form representatives $\GF(R)$, written as $m$-tuples, form an $m$-variable language that is $m$-regular.  
	
	Hence the image of $\GF(R)$ via the forgetful map \(\theta\) (respectively \(\theta_{\#}\)) is as an EDT0L language over $\Sigma$ (respectively \(\Sigma\cup\{\#\}\)).
\end{corollary}

\section*{Acknowledgements}
The authors were partially supported by EPSRC Standard Grant EP/R035814/1. 

They would like to thank Angus Macintyre for his generous advice on model theory and Katrin Tent, Mike Prest and Chlo\'e Perin for helpful discussions. They would also like to thank Alex Bishop for many useful comments, including the suggestion of a stronger Proposition \ref{prop:growth2}, and Tatiana Nagnibeda and Pierre de la Harpe for advice that improved the exposition of the paper.

\bibliography{references}{}
\bibliographystyle{plain}

\end{document}